\newtheorem{theorem}{Theorem}[section]
\newtheorem{lemma}[theorem]{Lemma}
\newtheorem{proposition}[theorem]{Proposition}
\newtheorem{corollary}[theorem]{Corollary}
\newtheorem{remark}[theorem]{Remark}
\newcommand{\R}{{\mathbb R}}
\newcommand{\Z}{{\mathbb Z}}
\newcommand{\N}{{\mathbb N}}
\newcommand{\C}{{\mathbb C}}
\newcommand{\E}{{\mathbb E}}
\newcommand{\RE}{{\rm Re\ }}
\newcommand{\dd}{{\rm\; d}}
\newcommand{\ddc}{{\rm d}}
\newcommand{\trace}{{\mathrm {Tr \,}}}
\newcommand{\KER}{{\rm Ker}}
\newcommand{\cL}{{\mathcal L}}
\newcommand{\LL}{\mathcal{L}}
\newcommand{\NN}{\mathcal{N}}
\newcommand{\cP}{\mathcal{P}}
\newcommand{\cK}{\mathcal{L}}
\renewcommand {\Re}{{\rm Re}}
\newcommand{\D}{{\mathrm D}}
\numberwithin{equation}{section}
\begin{document}

\title[Nonautonomous Ornstein-Uhlenbeck Equations]{Asymptotic behavior
and hypercontractivity 
in nonautonomous Ornstein-Uhlenbeck equations}

\author{Matthias Geissert, Alessandra Lunardi}

\address{FB Mathematik\\ Schlossgartenstr. 7\\ TU Darmstadt\\ 64289
Darmstadt\\ Germany}
\email{geissert@mathematik.tu-darmstadt.de}

\address{Dipartimento di Matematica\\ Parco Area delle Scienze 53/A \\ 
 43100 Parma\\ Italia}
\email{alessandra.lunardi@unipr.it}

\subjclass[2000]{47D06, 47F05, 35B65}
\keywords{Nonautonomous PDEs, Ornstein-Uhlenbeck operators, asymptotic 
behavior, spectrum, hypercontractivity}

\thanks{Both authors were partially supported by the MIUR - PRIN 2004 Research 
project ``Kolmogorov Equations''.}

\begin{abstract}
In this paper we investigate a class of nonautonomous linear parabolic
problems with time-depending Ornstein-Uhlenbeck operators. We study the
asymptotic behavior of the associated evolution operator and  evolution semigroup in the periodic and non-periodic
situation. Moreover, we show that the
associated evolution operator is hypercontractive. 
\end{abstract}

\maketitle

\section{Introduction}

In this paper we continue the investigations of \cite{DPL06,GL07} on a 
class of nonautonomous linear parabolic problems with time-depending 
Ornstein-Uhlenbeck operators. We study asymptotic behavior and hypercontractivity in 
Cauchy problems, 
\begin{equation}
\label{e6}
\left\{\begin{array}{l}
u_s(s,x) + \cK(s)u(s,x) = 0, \;\;s\leq t, \;x\in \R^{n},
\\
\\
u(t)=\varphi(x), \;x\in \R^{n}, 
\end{array}\right.
\end{equation}
as well as equations with time in the whole $\R$ and no initial or 
final data, 
\begin{equation}
\label{e7}
\lambda u(s,x) - (u_s(s,x) + \cK(s)u(s,x)) = h(s,x), \;\;s\in \R, \;x\in \R^{n}.
\end{equation}
Here $(\cK(t))_{t\in \R}$ is a family of  Ornstein-Uhlenbeck operators, 
\begin{equation}
    \cK(t)\varphi(x)=\frac12 \trace \left( B(t)B^*(t)\D_x^2\varphi
    (x)\right)+ \langle A(t)x+f(t),\D_x\varphi(x)\rangle ,\quad x\in\R^n,
    \label{ouoperator}
\end{equation}
with continuous and bounded data $A,B:\R\to\cL(\R^n)$ and $f:\R\to\R^n$.
Throughout the paper  we assume that the operators $\cK$ are uniformly elliptic, 
i.e.  there exists $\mu_0>0$ such that
\begin{equation}
    \|B(t)x\|\geq\mu_0\|x\|,\quad t\in\R,\ x\in\R^n.
    \label{invertb}
\end{equation}

The backward Cauchy problem \eqref{e6} 
is the Kolmogorov equation of the nonautonomous stochastic ODE 
\begin{equation}
\label{e1}
\left\{\begin{array}{l}
\ddc X_{t}=(A(t)X_{t}+f(t))\ddc t+B(t)\ddc W(t), 
\\
\\
X_{s}=x,
\end{array}\right.
\end{equation}
where  $W(t)$ is a standard $n$-dimensional Brownian motion and $s\in 
\R$, $x\in \R^n$. Indeed,  denoting by 
$X(s,t,x)$ the solution to \eqref{e1}, for each $t\in \R$ and 
$\varphi \in 
C^{2}_{b}(\R^{n})$ the function $u(s,x) :={\mathbb 
E}(\varphi(X(s,t,x)))$ satisfies \eqref{e6}. See e.g. \cite{GS72,KS91}. 
 
Under our ellipticity assumption, 
$u$ is in fact a classical solution to  \eqref{e6} just for 
$\varphi\in C_b(\R^n)$. The
transition evolution operator $P_{s,t}\varphi(x) := \E[\varphi(X(t,s,x))]$
may be explicitly written as
\begin{equation}
    \label{Pst}    P_{s,t}\varphi(x) = 
\int_{\R^n}\varphi(y)\NN _{m(t,s),Q(t,s)}(dy)
,\quad \varphi\in C_b(\R^n),\; s\leq t.  
\end{equation}
Here $\NN _{m(t,s),Q(t,s)}$ is the Gaussian measure with mean $m(t,s)$ 
and covariance $Q(t,s)$ given respectively by 
\begin{equation}
\label{e2.4}
m(t,s) := U(t,s)x+\int_{s}^{t}U(t,r)f(r)dr , \quad Q(t,s):=\int_s^t 
U(t,r)B(r)B^*(r)U^*(t,r)dr ,
\end{equation}
and $U$ is the evolution operator for $A(\cdot)$, i.e. for each $x\in 
\R^{n}$ the function $t\mapsto U(t,s)x$ is the solution to $\xi '(t) = 
A(t)\xi(t)$, $\xi(s) = x$.  

In the autonomous elliptic case $B(t)\equiv B$, $A(t)\equiv A$, $f(t)\equiv 
0$, with   $\det B \neq 0$, 
we have $P_{s,t} = T(t-s)$ where $T(t)$ is the Ornstein-Uhlenbeck semigroup. 
$T(t)$ is a Markov semigroup in $C_{b}(\R^{n})$. Its asymptotic 
behavior is well understood in the case that all the 
eigenvalues of $A$ have negative real part, so that 
$\|e^{tA}\|$ decays exponentially as $t\to \infty$. In this case, for 
each $x\in \R^{n}$
$T(t)\varphi(x)$ converges to a constant which is the mean value of 
$\varphi$ with respect to the unique invariant measure $\mu = \NN 
_{0,Q_{\infty}}$ of $T(t)$, i.e. the unique Borel probability measure 
in $\R^{n}$ such that 
$$\int_{\R^{n}} T(t)\varphi \dd \mu = \int_{\R^{n}} \varphi \dd \mu , 
\quad t>0, \;\varphi \in   C_{b}(\R^{n}). $$
For each $p\in [1, +\infty)$, $T(t)$ is extended in a standard way to a contraction semigroup (still 
denoted by $T(t)$) in $L^{p}(\R^{n}, \mu)$. 
If $\varphi\in L^{p}(\R^{n}, \mu)$, 
then $ T(t)\varphi$ converges exponentially to 
the mean value of $\varphi$ in $ L^{p}(\R^{n}, \mu)$, and the rate of 
convergence coincides with the rate of decay of $\|e^{tA}\|$ to zero. Moreover, $T(t)$ is hypercontractive, i.e. for $p>1$ and $t>0$ it maps $L^{p}(\R^{n}, \mu)$ into $L^{q(t)}(\R^{n}, \mu)$ for a suitable $q(t)>p$, and with norm $\leq 1$.

In our nonautonomous case the assumption that $\|e^{tA}\|$ decays exponentially 
as $t\to \infty$ is replaced by the assumption that   $\|U(t,s)\|$  decays exponentially 
as $t-s \to \infty$. More precisely we assume that
\begin{equation}
\label{exp_decay}
\begin{array}{ll}
\omega_0(U) := \inf\{ & \omega \in \R :\, \exists M=M(\omega) \,{\rm  
such}\,{\rm that}
\\
\\
 & \|U(t,s)\|\leq M{\rm e}^{ \omega(t-s)},\quad -\infty<s\leq 
t<\infty \} <0.
\end{array}
\end{equation}
Then there is not a unique invariant measure, but there exist  families of 
Borel probability measures $\{\nu_{t}:\;t\in \R\}$, called  {\em entrance laws 
at time} $-\infty$ in \cite{Dyn89} and 
{\em  evolution systems of measures} in \cite{DPR05},  such 
that  
\begin{equation}
\label{evmeas}
\int_{ \R^n} P_{s,t}\varphi  \dd   \nu_s = 
\int_{ \R^n} \varphi  \dd   \nu_t, \quad \varphi \in 
C_{b}(\R^{n}), \;s\leq t.
\end{equation}
Such families are infinitely many, and they were characterized in 
\cite{GL07}. Among all of them, a distinguished one has a prominent 
role in  
the asymptotic behavior of $P_{s,t}$. It is the family of measures $\nu_t$
defined by
\begin{equation}
\label{nu_t}
 \nu_t=\NN_{g(t,-\infty),Q(t,-\infty)},\quad t\in\R,
\end{equation}
and it is the unique one with uniformly bounded moments of some order, i.e. 
there exists $\alpha >0$ such that 
\begin{equation}
\label{moments}
\sup_{t\in \R} \int\limits_{\R^n} 
|x|^{\alpha}\nu_t(\ddc x) <+\infty.
\end{equation}
In fact, it satisfies \eqref{moments} for each $\alpha >0$.
This implies that for each $\varphi \in C_{b}(\R^{n})$ and for each 
$t\in \R$, $x\in \R^{n}$ we have
$$\lim_{ s\to -\infty}P_{s,t}\varphi(x) = \int_{\R^{n}} 
\varphi(y)\ddc \nu_{t}.$$
As in the autonomous case, we have a much better behavior if we work 
in $L^{p}$ spaces with respect to the measures $\nu_{t}$. But in this 
context, the 
evolution operator $P_{s,t}$ maps $L^{p}(\R^{n}, \nu_{t})$ into 
$L^{p}(\R^{n}, \nu_{s})$, hence it cannot be seen as an evolution 
operator in a fixed Banach space $X$. Still, we have the contraction 
estimate
$$\| P_{s,t}\|_{\cL(L^p(\R^{n},\nu_t),L^p(\R^{n},\nu_s))}\leq 1, 
\quad s<t, $$
as well as smoothing  estimates, proved in 
\cite{GL07}, that are optimal both 
for $t-s$ close to $0$ and for $t-s\to \infty$, and that are quite 
similar to the corresponding estimates in the autonomous case:
\begin{equation}
\label{stime}
\|\D_x^\alpha P_{s,t}\|_{\cL(L^p(\R^{n},\nu_t),L^p(\R^{n},\nu_s))}\leq
\left\{
\begin{array}{rl}
  C(t-s)^{-|\alpha|/2}{\rm e}^{ \omega|\alpha|(t-s)},&0<t-s<1,
    \\
  C{\rm e}^{ \omega|\alpha|(t-s)},&t-s>1.
\end{array}
\right.
\end{equation}
Here $\alpha$ is any multi-index, $\omega$ is any number in $(\omega_{0}(U),0)$ 
and $C=C(\alpha, \omega)$. 

Such estimates are the starting point for our study of asymptotic 
behavior in the $L^2$ setting. 
As in the theory of ordinary differential equations, we get very precise asymptotic 
behavior results if the data are time periodic. In this case the asymptotic behavior of the 
evolution operator $P_{s,t}$ 
is driven by the spectral properties of $P_{0,T}$, where $T$ is the 
period. Note that $P_{0,T}$ is a bounded operator in $L^{2}(\R^{n}, 
\nu_{0}) $ since $\nu_{0}= \nu_{T}$. By estimates \eqref{stime}, 
$P_{0,T}$ is bounded from $L^{2}(\R^{n}, 
\nu_{0}) $ to $H^{1}(\R^{n}, 
\nu_{0}) $, which is compactly embedded in $L^{2}(\R^{n}, 
\nu_{0}) $ since $\nu_{0}$ is a Gaussian measure with nondegenerate covariance matrix. Therefore, its 
spectrum consists of $0$, plus (at most) a sequence of eigenvalues. 
We show that the 
unique eigenvalue of $P_{0,T}$ in the unit circle is $1$, that it has eigenvalues with 
modulus equal to $\exp( \omega_{0}(U)T)$, and that the modulus of the other eigenvalues 
 does not exceed $\exp( \omega_{0}(U)T)$. 

For any $t\in \R$ and $\varphi \in L^{2}(\R^{n}, \nu_{t}) $ let
$$M_{t}\varphi := \int_{\R^{n}} \varphi \dd \nu_{t}$$ 
be the mean value of $\varphi $ with respect to $\nu_{t}$. 
We know from \cite{DPL06} that  the $L^{2}(\R^{n}, \nu_{s})$-norm of
$P_{s,t}(\varphi - M_{t}\varphi)$ 
converges exponentially to $0$ as $t-s\to \infty$. Using the above 
spectral properties, we determine the exact convergence rate, proving that 
for each $\omega \in (\omega_{0}(U),0)$ there is  $M>0$ such 
that 
\begin{equation}
    \label{asintoticoI}
\|P_{s,t}(\varphi - M_{t}\varphi)\|_{L^{2}(\R^{n}, \nu_{s})} 
    \leq Me^{\omega(t-s)}\|\varphi\|_{L^{2}(\R^{n}, \nu_{t})} , \quad 
    s<t, \; \varphi\in L^2(\R^n,\nu_{t }), 
\end{equation}
and that for each $\omega < \omega_{0}(U)$ there is no $M $ such 
that   \eqref{asintoticoI} holds. Moreover, \eqref{asintoticoI} holds 
also for $\omega = \omega_{0}(U)$ iff all the eigenvalues of $U(T,0)$ 
with modulus equal to $\exp(T\omega_{0}(U))$ are semisimple.

Still in the case of $T$-periodic coefficients, a natural setting for 
problem \eqref{e7} is the space $L^2_\#( \R^{1+n},\nu)$ consisting of 
the Lebesgue measurable functions $h$ such that $h(s+T,x)=h(s,x)$ a.e. 
and the norm
$$\|h\|_{L^2_\#( \R^{1+n},\nu)} = \bigg( \frac{1}{T}\int_{0}^{T} \int_{\R^{n}} 
|h(s,x)|^{2}\dd \nu_{s} \ddc s\bigg)^{1/2}$$
is finite. In the paper \cite{GL07} we showed that if $\lambda  $ is 
any complex number, $h\in L^2_\#( \R^{1+n},\nu)$,
and $u\in L^2_\#( \R^{1+n},\nu)$ $\cap$ $H^{1,2}_{loc}(\R^{1+n}, \ddc t\times \ddc x)$ is a time 
periodic solution of \eqref{e7}, then $u$ belongs to $H^{1,2}_\#( 
\R^{1+n},\nu)$ i.e. $u_{t}$ and all the space derivatives $u_{x_{i}x_{j}}$ 
belong to $L^2_\#( \R^{1+n},\nu)$. The operator
$$\left\{ \begin{array}{l}
G_\#: D(G_\#)=  H^{1,2}_\#(\R^{1+n},\nu)\mapsto L^2_\#( \R^{1+n},\nu), 
\\
\\
G_\#u(s,x) =  u_s(s,x) + \cK(s)u(s,x) 
\end{array}\right. $$
may be seen as the infinitesimal generator of the evolution semigroup $ 
\cP^\#_\tau u $ in $L^2_\#( \R^{1+n},\nu)$ defined by
\begin{align}\label{rep}
  ( \cP^\#_\tau u  )(s,x) & = \left( P_{s,s+\tau}
    u(s+\tau,\cdot) \right)(x),\quad s\in\R,\ x\in\R^n,\
    \tau\geq 0 ,\ u\in L^2_\#( \R^{1+n},\nu),
\end{align}
and the measure $\nu$ is invariant for the semigroup $(\cP^\#_\tau)_{\tau \geq 0}$, see \cite{DPL06}. 
Although $(\cP^\#_\tau)_{\tau \geq 0}$ is not a standard evolution 
semigroup (since, as we already remarked, $P_{s,s+\tau}$ does not act 
in a fixed Banach space $X$ but it maps $L^{2}(\R^{n}, \nu_{s+\tau})$ into 
$L^{2}(\R^{n}, \nu_{s})$), a part of the classical theory of evolution 
semigroups may be extended to our situation, and the
spectral properties of the generator $G_\#$ are strongly connected 
with the asymptotic behavior of $ \cP^\#_\tau $. In its turn, the 
asymptotic behavior of $ \cP^\#_\tau $ may be easily deduced from the 
asymptotic behavior of $P_{s,t}$. 
In particular, setting 
\begin{equation}
    \label{pi}
(\Pi u )(t,x) := M_{t}u(t, \cdot), \quad  t\in \R, \;x\in \R^{n}, 
\end{equation}
and using \eqref{asintoticoI}, we see that $\cP^\#_\tau u$ converges exponentially to $\Pi u$ as 
$\tau \to \infty$, for each $u\in L^2_\#( \R^{1+n},\nu)$, and 
the growth bound of $(\cP^\#_\tau(I - \Pi))_{\tau\geq 0}$ is 
$\omega_{0}(U)$. $\Pi$ is the spectral projection relative to 
$\sigma(G_\#)\cap i\R =  2\pi i\Z /T $, its range is
isomorphic to $L^2_\#(\R; \ddc t)$. Moreover, $G_\#$ has infinitely 
many isolated eigenvalues on the vertical line $\{\lambda\in \C:$ 
Re$\,\lambda = \omega_{0}(U)\}$. The real parts of the remaining
eigenvalues are less than $\omega_{0}(U)$. 
On the other hand,   the spectrum of $G_\#$ consists of eigenvalues only,
because $D(G_\#)$ is compactly embedded in $L^2_\#( \R^{1+n},\nu)$
as we proved in \cite{GL07}. 
 
So, $G_\#$ has a spectral gap 
that corresponds precisely to the asymptotic behavior  
of $(\cP^\#_\tau(I - \Pi))_{\tau\geq0}$. This implies that for each 
$\lambda$ with real part in $(0, +\infty)$, in $(\omega_{0}(U),0)$, and also for 
$\lambda \in i\R \setminus  2\pi i\Z /T$, for each $h\in L^2_\#( \R^{1+n},\nu)$
equation \eqref{e7} has a unique solution $u\in D(G_\#)$. For $\lambda =0$, it is easy to see that the 
range of $G_\#$ consists of the functions $h$ such that the mean 
value $\int_{0}^{T}\int_{\R^{n}}h(t,x)\dd \nu_{t}\dd t $ vanishes, 
and in 
this case the solution of \eqref{e7}  is unique up to constants. 

If the data are not periodic but just bounded, a natural Hilbert setting 
for problem \eqref{e7} is the space $L^2 (\R^{1+n},\nu)$ consisting of 
the Lebesgue measurable functions $h$ such that  the norm
$$\|h\|_{L^2 ( \R^{1+n},\nu)} = \bigg( \int_{\R}  \int_{\R^{n}} 
|h(s,x)|^{2}\dd \nu_{s} \ddc s\bigg)^{1/2}$$
is finite. A maximal regularity result similar to the one in the 
periodic space still holds, namely 
 if $\lambda \in \C$, $h\in L^2 ( \R^{1+n},\nu)$
and $u\in L^2 ( \R^{1+n},\nu) \cap H^{1,2}_{loc}(\R^{1+n}, \ddc t\times \ddc x)$ is a  
 solution of \eqref{e7}, then $u\in H^{1,2} ( 
\R^{1+n},\nu)$ i.e. $u_{t}$ and all the space derivatives $u_{x_{i}x_{j}}$ 
belong to $L^2 ( \R^{1+n},\nu)$. The operator
$$\left\{ \begin{array}{l}
G : D(G )=  H^{1,2} (\R^{1+n},\nu)\mapsto L^2 ( 
\R^{1+n},\nu), 
\\
\\
G u(s,x) =  u_s(s,x) + \cK(s)u(s,x) 
\end{array}\right. $$
is the infinitesimal generator of the evolution semigroup $ 
(\cP _\tau )_{\tau\geq0}$ in $L^2 ( \R^{1+n},\nu)$, defined as $\cP^\#_\tau$ by
\begin{equation}
    \label{rep2}
 ( \cP_\tau u  )(s,x)  = \left( P_{s,s+\tau}
    u(s+\tau,\cdot) \right)(x),\quad s\in\R,\ x\in\R^n,\
    \tau\geq 0 ,\; u \in L^2 ( \R^{1+n},\nu).
\end{equation}
See   \cite{GL07}. 
A part of the  properties of $(\cP^\#_\tau)_{\tau\geq 0}$ and $G_\#$ are enjoyed by  $(\cP_\tau 
)_{\tau\geq 0}$ and  $G$. 
However, without periodicity and compact embeddings, the results are less 
precise. 
$G$  has still a spectral gap: its spectrum contains the 
whole imaginary axis, and it has no elements with real part in 
$(c_{0},0)$, where $c_{0}<0$ depends on $A$ and $B$. Therefore, for each 
$\lambda $ with real part in $(c_{0},0)\cup (0, +\infty)$ and for each 
$h\in L^2(\R^{1+n},\nu)$, equation \eqref{e7} has a unique solution 
in $D(G)$. For $\lambda =0$, we  show that for $h \in 
L^2(\R^{1+n},\nu)$ problem \eqref{e7} has a solution 
in  $D(G)$ iff the 
function $t\mapsto M_{t}h$ has a primitive in $L^2(\R; \ddc t)$, in 
this case the solution is unique. 

The projection $\Pi$ defined in \eqref{pi} is still the spectral projection relative to the imaginary axis, 
the range of $\Pi$ is isomorphic to $L^2(\R; \ddc t)$, the 
restriction of 
$(\cP_\tau )_{\tau\geq 0}$ to the range of $\Pi$ is the translation semigroup in $L^2(\R; \ddc t)$, 
and the growth bound of $(\cP_\tau(I - \Pi))_{\tau\geq 0}$ does not 
exceed $ c_{0}$. 
So, for each $u\in L^2(\R^{1+n},\nu)$, $\cP_\tau u$ converges exponentially to $\Pi u$ 
as $\tau \to \infty$ and we have an estimate for the convergence 
rate; the optimal convergence rate is still an open problem. 

Our procedure is reversed with respect to the periodic setting. As a 
first result we show that $\cP_\tau(I - \Pi)$ converges exponentially to zero
through  Poincar\'e type inequalities that hold in $D(G)$. Then 
from the general theory of semigroups, it follows that the 
spectrum of the part of $G$ in $(I-\Pi)(L^2( \R^{1+n},\nu))$ is 
contained in the halfplane $ \{\lambda \in \C:$ Re$\,\lambda \leq c_{0}\}$. 
Moreover we obtain asymptotic behavior properties of $P_{s,t}$ from 
the asymptotic behavior properties of $\cP_\tau$, adapting to our 
situation the method used for the standard evolution semigroups and 
evolution operators. A crucial point in the proof is the continuity of the function 
$s\mapsto \|P_{s,s+\tau}\varphi \|_{L^{2}(\R^{n}, 
\nu_{s})}^{2}$, for each $\tau > 0$ and for each good $\varphi$, say 
$\varphi \in 
C^{1}_{b}(\R^{n})$. Eventually, we obtain
$$\|P_{s,t}(\varphi - M_{t}\varphi)\|_{L^{2}(\R^{n}, \nu_{s})} \leq 
  e^{ c_{0}(t-s)}\|\varphi \|_{L^{2}(\R^{n}, \nu_{t})} , \quad s<t, \; \varphi\in L^{2}(\R^{n}, 
\nu_{t}),$$
where $c_{0}$ is the above constant. 

In the last section we show that $(P_{s,t})_{s\leq t}$ is
hypercontractive, i.e.   $ P_{s,t}$ maps $L^{q}(\R^{n}, \nu_{t})$ 
into $L^{p(s,t)}(\R^{n}, \nu_{s})$ for suitable $p(s,t)>q$ if $s<t$, $q>1$, and
\begin{align*}
\| P_{s,t}\varphi\|_{L^{p(s,t)}(\R^n,\nu_{s})}\leq
\|\varphi\|_{L^q(\R^n,\nu_t)},\quad \varphi\in L^q(\R^n,\nu_t),\	s\leq t. 
\end{align*}
Moreover, $p(s,t)\geq 1+ (q-1)e^{2c_{0}(s-t)}$. Estimates of this type are
well-known in the autonomous case, see \cite{CMG96,Fuh98,Gro75}. As far as we know, this is the first hypercontractivity result in the nonautonomous case. 

Our approach is based on the ideas used in \cite{Gro75}. More precisely,
we differentiate 
\begin{align*}
\alpha(s)=\|P_{s,t}\varphi  \|_{L^{p(s,t)}(\R^n,\nu_s)}
\end{align*}
with respect to $s$ for suitable functions $\varphi$ and we show that $
\alpha'(s)\geq 0$ for $s\leq t$ with help of a variant of the classical logarithmic
Sobolev inequalities. 
The difference with the autonomous case is that we have to deal
with additional terms since the measure $\nu_s$ depends on $s$ as well.


\section{Spectral properties and asymptotic behavior}


In this section we investigate the spectrum of $(\cP_\tau)_{\tau\geq0}$
and $(\cP^\#_\tau)_{\tau\geq0}$, and of their generators. This leads 
to results about asymptotic behavior of such semigroups, and of the
evolution operator $P_{s,t}$. 

We already remarked that the general theory of parabolic evolution 
operators in Banach spaces cannot be directly applied to our $P_{s,t}$ 
because it does not act on a fixed $L^{2}$ space but it maps $X(t) = L^2(\R^n,\nu_t)$ into $X(s) = L^2(\R^n,\nu_s)$ 
and these spaces do not coincide in general. The same difficulty 
arises for the evolution semigroups $(\cP^\#_\tau)_{\tau\geq 0}$ and 
$(\cP _\tau)_{\tau\geq 0}$, since the general theory (see e.g. the 
monograph
\cite{CL99}) has been developed for 
evolution semigroups associated to evolution operators in a fixed 
Banach space $X$. Therefore, we have to start from the very 
beginning. However, some results can be extended to our situation with minor modifications. 
This is the case of the spectral mapping theorems of the next 
subsection.

\subsection{Spectral mapping theorems.}
We start with the spectral mapping theorem for $(\cP^\#_\tau)_{\tau\geq 0}$. 
Next proposition \ref{spectral} is is a variant  of \cite[Theorem 3.13]{CL99} for time-depending spaces.
 Its proof is based on the ``change-of-variable'' trick, see \cite{LMS95}.

 We need some preparatory remarks. 
 
 If $X$ is any Banach space, we define the space $L_\#^2(\R,X)$ as 
 the space of all Bochner measurable functions  $Z:\R \mapsto X$, 
 such that $Z(\theta +T) = Z(\theta)$ for almost  all $\theta\in \R$ and 
 $\|Z\|^{2} :=  \int_{0}^{T}\|Z(\theta)\|_{X}^{2}\dd
 \theta <\infty.$

If $X= L_\#^2( \R^{1+n})$, then $L_\#^2(\R,X)$ may be identified 
(setting $z(\theta,t,x) = Z(\theta)(t,x)$ for each $Z\in L_\#^2(\R,X)$)
with the space $L_\#^2(\R^{2+n})$ 
consisting of the Lebesgue measurable functions $z $ defined in 
$\R^{2+n}$ 
such that $z(\theta +T, t, x) = z(\theta  , t, x)$, $z(\theta , t+T, 
x) = z(\theta  , t, x)$ for almost all $\theta$, $t\in \R$ and $x\in 
\R^{n}$, endowed with the norm 
$$\| z \| = \frac{1}{T} \left(  \int_{0}^{T} \int_{0}^{T}\int_{\R^{n}} 
|z(\theta, t, x)|^{2} \nu_{t}(\ddc x)\dd t \dd \theta \right)^{1/2}. $$

 \begin{proposition}
  \label{spectral}
If $A$, $B$, and $f$ are $T$-periodic, then
\begin{align*}
  \sigma(\cP^\#_\tau)\setminus\{0\} = {\rm e}^{\tau\sigma(G_\#)},\quad\tau>0.
\end{align*}
 \end{proposition}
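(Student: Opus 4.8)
The plan is to reduce the spectral mapping theorem for the non-standard evolution semigroup $(\cP^\#_\tau)_{\tau \geq 0}$ to the classical one of \cite[Theorem 3.13]{CL99} by means of the change-of-variable trick of \cite{LMS95}. Since one inclusion, namely ${\rm e}^{\tau\sigma(G_\#)} \subseteq \sigma(\cP^\#_\tau)$, is the easy half of the spectral mapping theorem and holds for any strongly continuous semigroup (it follows from the identity ${\rm e}^{\tau\lambda} - \cP^\#_\tau = -(\lambda - G_\#)\int_0^\tau {\rm e}^{(\tau - r)\lambda}\cP^\#_r\,dr$ together with the same identity with the factors in the other order), the real content is the reverse inclusion $\sigma(\cP^\#_\tau)\setminus\{0\}\subseteq {\rm e}^{\tau\sigma(G_\#)}$. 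So fix $\tau > 0$ and suppose $\mu \in \sigma(\cP^\#_\tau)$ with $\mu \neq 0$; we must produce $\lambda \in \sigma(G_\#)$ with ${\rm e}^{\tau\lambda} = \mu$.

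First I would set up the auxiliary space: working on the torus variable $\theta$ I consider $X = L^2_\#(\R^{1+n},\nu)$ and the space $L^2_\#(\R,X) \cong L^2_\#(\R^{2+n})$ described in the preparatory remarks, on which one defines the \emph{multiplication-type} evolution semigroup $(\widehat{\cP}_\tau z)(\theta,s,x) := (P_{s,s+\tau}\,z(\theta - \tau, s+\tau, \cdot))(x)$, together with the translation group $(R_\theta z)(\cdot,s,x) := z(\cdot - \theta, s, x)$ on the $\theta$-variable. The key algebraic fact, exactly as in \cite{LMS95,CL99}, is the intertwining/commutation relation between $\widehat{\cP}_\tau$ and $R_\theta$, which on the torus forces the spectrum of $\widehat{\cP}_\tau$ to be rotation-invariant, and more precisely lets one relate $\sigma(\widehat{\cP}_\tau)$ on $L^2_\#(\R,X)$ to $\sigma(\cP^\#_\tau)$ on $X$: every point of $\sigma(\cP^\#_\tau)\setminus\{0\}$ gives rise, via the Fourier decomposition in $\theta$, to a whole circle $\{\zeta\mu : |\zeta| = 1\}$ in the spectrum of a suitable operator whose generator is accessible. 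One then checks, using the smoothing estimates \eqref{stime} (which give that $P_{0,T}$, hence $\cP^\#_T$, maps $X$ into $H^{1,2}_\#$, compactly embedded in $X$) that the relevant operators have spectrum consisting only of eigenvalues off zero, so that $\mu$ is in fact an eigenvalue of $\cP^\#_\tau$.

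Next I would run the standard argument that turns an eigenvalue $\mu = {\rm e}^{\tau(\alpha + i\beta)}$ of $\cP^\#_\tau$ into an eigenvalue of $G_\#$: given $0 \neq u$ with $\cP^\#_\tau u = \mu u$, the function $v(s,x) := {\rm e}^{-(\alpha+i\beta)s}\cdot(\text{the natural $T$-periodic extension built from }\cP^\#_s u)$ is invariant (up to the scalar) under all $\cP^\#_s$, and one shows $v$ lies in $D(G_\#) = H^{1,2}_\#(\R^{1+n},\nu)$ using once more the maximal regularity / smoothing statements quoted from \cite{GL07}, whence $(\alpha + i\beta) - 2\pi i k/T \in \sigma(G_\#)$ for some $k \in \Z$ with ${\rm e}^{\tau(\alpha+i\beta - 2\pi i k/T)} = \mu$, because the period $T$ and the semigroup time $\tau$ need not be commensurable. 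The compact-embedding input guarantees this $\lambda$ is genuinely in $\sigma(G_\#)$ and not merely in some approximate spectrum.

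The main obstacle, and the reason a verbatim citation of \cite{CL99} does not suffice, is precisely that $P_{s,t}$ does not act on a fixed Banach space: it maps $L^2(\R^n,\nu_t)$ into $L^2(\R^n,\nu_s)$. Consequently the change-of-variable operators and the semigroup $\widehat{\cP}_\tau$ live on a space with a $t$-dependent measure $\nu_t$ in the fibers, and one must verify by hand that all the manipulations (strong continuity, the commutation with $R_\theta$, the Fourier-in-$\theta$ decomposition, and the passage to eigenfunctions) still go through in this fibered setting; the invariance of $\nu$ for $(\cP^\#_\tau)_{\tau\geq 0}$ recorded after \eqref{rep} and the contraction bound $\|\cP^\#_\tau\|_{\cL(X)} \leq 1$ are what make the construction well-defined, and the smoothing estimates \eqref{stime} are what supply the compactness needed to reduce everything to point spectrum. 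I expect the bulk of the work to be this careful adaptation rather than any single hard inequality.
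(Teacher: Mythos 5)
Your first half follows the paper's route: both pass to the auxiliary space $L^2_\#(\R,X)$ with $X=L^2_\#(\R^{1+n},\nu)$, introduce the evolution semigroup over the extra variable $\theta$, and use the change-of-variable isometry of \cite{LMS95}. But the way you propose to finish has a genuine gap. You claim that the smoothing estimates \eqref{stime} show that $\cP^\#_T$ maps $X$ into $H^{1,2}_\#$, compactly embedded in $X$, so that the nonzero spectrum of $\cP^\#_\tau$ consists of eigenvalues, after which the point-spectrum spectral mapping theorem applies. This is false: \eqref{stime} gains regularity only in $x$, while $\cP^\#_\tau$ acts as a translation in $t$ and gains no $t$-regularity, so it does not map into $H^{1,2}_\#$ (which requires $u_t\in L^2_\#$) and is not compact. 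Concretely, on the range of $\Pi$ (functions independent of $x$, isomorphic to $L^2_\#(0,T)$) the semigroup $\cP^\#_\tau$ is the rotation by $\tau$; when $\tau/T$ is irrational its spectrum there is the whole unit circle, whereas its eigenvalues form only the countable set $\{{\rm e}^{2\pi ik\tau/T}:k\in\Z\}$. Thus a nonzero spectral value of $\cP^\#_\tau$ need not be an eigenvalue, and your reduction collapses. The compactness actually available --- $D(G_\#)\hookrightarrow L^2_\#$, i.e.\ compact resolvent --- yields pure point spectrum for the \emph{generator} $G_\#$, not for the semigroup, and is not what this proposition rests on.

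The paper closes the argument differently, entirely at the level of resolvents: the isometry $J$ intertwines the evolution semigroup $\widetilde{\cP}_\tau$ with the multiplication semigroup ${\mathcal E}_\tau$ whose generator is multiplication by $G_\#$, so $\rho(\widetilde{G})=\rho(G_\#)$ and $\sigma(\widetilde{\cP}_\tau)=\sigma({\mathcal E}_\tau)=\sigma(\cP^\#_\tau)$; then the classical spectral mapping theorem for evolution semigroups \cite[Theorem 2.30]{CL99}, applied to $\widetilde{\cP}_\tau$, gives that $\lambda\in\rho(G_\#)$ implies ${\rm e}^{\tau\lambda}\in\rho(\cP^\#_\tau)$. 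That theorem is exactly the device that handles approximate point spectrum, which is where your eigenvalue reduction fails; to repair your argument you would have to work with approximate eigenvectors throughout, and at that point you are reproving the cited theorem rather than bypassing it.
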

\begin{proof}
The inclusion ${\rm e}^{\tau\sigma(G_\#)}\subset    
\sigma(\cP^\#_\tau)$ comes from the general theory of semigroups, see 
e.g. \cite[\S 3.6]{EN00}. We 
have to prove that   $\sigma(\cP^\#_\tau)\setminus\{0\} \subset {\rm 
e}^{\tau\sigma(G_\#)}$, or,  equivalently, that if $\lambda \in \rho  
(G_\#)$ then $e^{\tau \lambda }\in \rho(\cP^\#_\tau)$. 

Set $X:= L_\#^2(\R^{1+n})$. We define two semigroups in the space $L_\#^2(\R,X)$. 
The first one is the 
$T$-periodic evolution 
semigroup associated to our semigroup  $ \cP^\#_\tau  $, the 
second one   is the so called multiplication semigroup by $ \cP^\#_\tau  $:
$$\begin{array}{lll}
(\widetilde{\cP}_\tau Z)(\theta)  & = & \cP^\#_\tau (Z(\theta -\tau )) , \quad \tau >0,
\\
\\
({\mathcal E}_\tau Z) (\theta ) & = & \cP^\#_\tau (Z(\theta )),  \quad \tau >0.
\end{array}$$
It is easy to see that the 
infinitesimal generator $A$ of $({\mathcal E}_\tau  )_{\tau \geq 0}$ is 
the multiplication operator by $G_\#$, that is 
$$D(A) = \{Z\in L_\#^2(\R,X):\;Z(\theta)\in 
D(G_\#) \;{\rm a.e}\}, \quad AZ(\theta) = G_\#Z(\theta),$$
the resolvent set $\rho(A)$ of $A$ coincides with $\rho(G_\#)$, and 
$(R(\lambda,A)F) (\theta) = R(\lambda, G_\#)(F(\theta))$ for all 
$\lambda \in \rho(G_\#)$, $F\in 
L_\#^2(\R,X)$ and $\theta \in \R$. 

Now we prove that $\rho(A) = \rho(\widetilde{G} )$, where $\widetilde{G}$ 
is the infinitesimal generator of $(\widetilde{\cP}_\tau)_{\tau\geq 0}$. 

Setting as above $z(\theta ,t,x) = Z(\theta)(t,x)$, we identify  $L_\#^2(\R,X)$ with 
$L_\#^2(\R^{2+n})$. Then  $\widetilde{\cP}_\tau$ and ${\mathcal E}_\tau z$ 
may be rewritten as semigroups in $L_\#^2(\R^{2+n})$,
$$\begin{array}{lllll}
(\widetilde{\cP}_\tau z)(\theta, t, x) & = & \cP^\#_\tau z(\theta - 
\tau, \cdot, \cdot)(t,x) & = & P_{t,t+\tau}z(\theta -\tau , t+\tau, \cdot)(x), 
\\
\\
({\mathcal E}_\tau z ) (\theta, t, x) & = &  \cP^\#_\tau z(\theta ,
\cdot, \cdot)(t,x) & = & P_{t,t+\tau}z(\theta, t+\tau ,\cdot)(x).
\end{array}$$
We define the isometry $J: L_\#^2(\R^{2+n})\mapsto  L_\#^2(\R^{2+n})$ by 
$$(Jz)(\theta, t, x) = z(\theta -t, t, x), \quad (\theta, t, x)\in \R^{2+n}. $$
Then ${\mathcal E}_\tau J = J\widetilde{\cP}_\tau$ for each $\tau >0$, 
and this implies immediately that $D(\widetilde{G}) = J^{-1}(D(A))$, $ 
\widetilde{G} = J^{-1}AJ$ and $\rho (\widetilde{G}) = \rho(A)$. 

So, we have 
$$\rho(G_\#) = \rho(A) = \rho (\widetilde{G}).$$
Since  $(\widetilde{\cP}_\tau)_{\tau\geq 0}$ is an evolution 
semigroup, then by the general theory of evolution semigroups we have 
$\rho(\cP^\#_\tau ) = \rho(\widetilde{\cP}_\tau ) ={\rm 
e}^{\tau\rho(\widetilde{G} )}$ for   each 
$\tau\geq 0$, see e.g.  \cite[Theorem 2.30]{CL99}.
In particular,  if $\lambda \in \rho(G_\#)$ then ${\rm e}^{\tau 
\lambda}\in \rho(\widetilde{\cP}_\tau ) = \rho(\cP^\#_\tau )$, and the 
statement follows. 
 \end{proof}
 
We have a corresponding result in the non-periodic case.   
The proof is the same, with the space $L ^2(\R,L ^2( \R^{1+n},\nu))$ instead of
$L_\#^2(\R,L_\#^2(\R^{1+n}))$.

 \begin{proposition}
  We have 
  \begin{align*}
   \sigma(\cP_\tau)\setminus\{0\}={\rm e}^{\tau\sigma(G)},\quad\tau>0.
  \end{align*}
 \end{proposition}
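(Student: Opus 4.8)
The plan is to mimic the proof of Proposition \ref{spectral} verbatim, replacing every occurrence of the periodic spaces by their non-periodic counterparts. Concretely, I would set $X := L^2(\R^{1+n},\nu)$ and introduce on $L^2(\R,X)$ the two semigroups
\begin{align*}
(\widetilde{\cP}_\tau Z)(\theta) &= \cP_\tau(Z(\theta-\tau)), & (\mathcal{E}_\tau Z)(\theta) &= \cP_\tau(Z(\theta)), \qquad \tau>0.
\end{align*}
As before, $\mathcal{E}_\tau$ is the multiplication semigroup, so its generator is the multiplication operator by $G$, acting on $\{Z\in L^2(\R,X):Z(\theta)\in D(G)\text{ a.e.}\}$; consequently $\rho$ of that generator equals $\rho(G)$, with resolvent acting pointwise in $\theta$. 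Identifying $L^2(\R,X)$ with $L^2(\R^{2+n})$ via $z(\theta,t,x)=Z(\theta)(t,x)$, the two semigroups become
\begin{align*}
(\widetilde{\cP}_\tau z)(\theta,t,x) &= P_{t,t+\tau}z(\theta-\tau,t+\tau,\cdot)(x), & (\mathcal{E}_\tau z)(\theta,t,x) &= P_{t,t+\tau}z(\theta,t+\tau,\cdot)(x).
\end{align*}

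Next I would introduce the isometry $J:L^2(\R^{2+n})\to L^2(\R^{2+n})$, $(Jz)(\theta,t,x)=z(\theta-t,t,x)$ — note this is an isometry precisely because the $\theta$-integration is over all of $\R$ with Lebesgue measure, so the substitution $\theta\mapsto\theta-t$ preserves the norm. A direct computation gives $\mathcal{E}_\tau J=J\widetilde{\cP}_\tau$, whence $\widetilde{G}=J^{-1}AJ$ (with $A$ the generator of $\mathcal{E}_\tau$) and $\rho(\widetilde{G})=\rho(A)=\rho(G)$. Finally, since $(\widetilde{\cP}_\tau)_{\tau\geq0}$ is a genuine evolution semigroup on a fixed Banach space $L^2(\R,X)$, the classical spectral mapping theorem for evolution semigroups (e.g. \cite[Theorem 2.30]{CL99}) yields $\rho(\cP_\tau)=\rho(\widetilde{\cP}_\tau)={\rm e}^{\tau\rho(\widetilde{G})}$ for each $\tau\geq0$, and combining the equalities gives $e^{\tau\lambda}\in\rho(\cP_\tau)$ whenever $\lambda\in\rho(G)$; together with the general inclusion ${\rm e}^{\tau\sigma(G)}\subset\sigma(\cP_\tau)$ this proves $\sigma(\cP_\tau)\setminus\{0\}={\rm e}^{\tau\sigma(G)}$.

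The point that requires a small amount of care — and the only place where periodicity was used nontrivially in Proposition \ref{spectral} — is verifying that $J$ is still a well-defined isometry and that the Bochner-measurability and integrability conditions defining $L^2(\R,X)$ are preserved under $J$, $\mathcal{E}_\tau$ and $\widetilde{\cP}_\tau$; in the periodic case this was automatic from compactness of the torus, whereas here one should check that $z\in L^2(\R^{2+n})$ implies $Jz\in L^2(\R^{2+n})$ by Fubini and translation invariance of Lebesgue measure in $\theta$, and that $\cP_\tau$ maps $L^2(\R^{1+n},\nu)$ to itself (which follows from the invariance of $\nu$ for $(\cP_\tau)_{\tau\geq0}$ recorded in the introduction). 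I would also note that the abstract evolution-semigroup results of \cite{CL99} apply because $X$ here is a genuine fixed Banach space — the space-dependence on $t$ that obstructed a direct treatment of $P_{s,t}$ has been absorbed into the single space $L^2(\R^{1+n},\nu)$. Since all of these verifications are routine, the statement "the proof is the same" in the text is justified, and I expect no genuine obstacle beyond this bookkeeping.
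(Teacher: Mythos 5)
Your proposal is correct and is exactly the argument the paper intends: the paper's own ``proof'' of this proposition is the single remark that one repeats the proof of Proposition \ref{spectral} with $L^2(\R,L^2(\R^{1+n},\nu))$ in place of $L_\#^2(\R,L_\#^2(\R^{1+n}))$, which is precisely what you carry out. Your observation that the isometry $J$ now rests on translation invariance of Lebesgue measure in $\theta$ rather than on periodicity is the right point to check, and the rest is indeed routine bookkeeping.
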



\subsection{Exponential dichotomy and asymptotic behavior of  $P_{s,t}$ in the periodic case.}

Throughout this section we assume that $A$, $B$, and $f$ are 
$T$-periodic. 
As in the case of a fixed Banach space $X$ (see \cite{Hen81}), the asymptotic behavior of $P_{s,t}$
is determined by the spectral properties of the Poincar\'e operators, 
$$V(t) := P_{t-T,t} \in \cL(L^2(\R^n,\nu_t)), \quad t\in \R.$$

In the following proposition we collect the spectral properties of 
the operators $V(t)$ that will be used in the sequel. An important 
role is played by the projections on the subspace of constant 
functions, given by the mean values:
\begin{equation}
    \label{Mt}
    M_{t}\varphi :=  \int_{\R^{n}}\varphi  \dd  
\nu_{t}, \quad \varphi\in L^2(\R^n,\nu_t). 
\end{equation}

We recall that the eigenvalues of $U(t+T,t)$ are independent of $t$, 
and that $\lambda $ is a semisimple eigenvalue of $U(t+T, t)$ iff it 
is a semisimple eigenvalue of $U(T,0)$ iff it is a semisimple 
eigenvalue of $U^{*}(T,0)$. Moreover, denoting by $r_{0}$ the spectral radius of all the operators 
$U(t+T,t)$ we have $ \omega_{0 }(U) =\frac{1}{T} \log r_{0}$, i.e. 
$$r_{0} = e^{\omega_{0}(U)T}.$$

\begin{proposition}
\label{evolutionspectrum}
The spectrum of $V(t)$ is independent of $t$, and it consists of
isolated eigenvalues with modulus $\leq 1$, plus $0$. 
Moreover, 
\begin{enumerate}
\item If $\lambda \in \sigma (V(t))$ and 
  $|\lambda|=1$, then $\lambda=1$, it is a simple eigenvalue, and the 
  eigenspace consists of the constant functions. The spectral projection 
  is $M_{t}$. 
\item If $\lambda \in \sigma (V(t))$ and  $|\lambda|<1$, then 
$|\lambda|\leq r_{0}$, and the generalized eigenspace consists of 
polynomials with degree $\leq \frac{\log |\lambda|}{\log r_{0}}$.
\item For $|\lambda |<1$, there exists a non-constant polynomial $\varphi$ of degree $1$
 satisfying $V(t)\varphi =\lambda \varphi $ if and only if 
  $\lambda\in\sigma(U(T,0))$. In this case,  
 \begin{align*}
     \varphi (x) = \langle \vec c,  x\rangle 
     +\frac{1}{\lambda-1}\langle\vec c ,g(t ,t-T)\rangle,
 \end{align*}
 where $\vec c$ is an eigenvector of $U^{*}(t, t-T)$ with eigenvalue
 $\lambda$.
 \item An eigenvalue  of $V(t)$ with modulus equal to $r_{0}$
 is semisimple iff it is a semisimple eigenvalue of $U(T,0)$. 
\end{enumerate}
\end{proposition}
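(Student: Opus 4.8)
The plan is to treat the general features first, then the four items in order. Since by the smoothing estimate \eqref{stime} (with a first order multi-index and $t-s=T$) $V(t)$ maps $L^2(\R^n,\nu_t)$ boundedly into $H^1(\R^n,\nu_t)$, which embeds compactly into $L^2(\R^n,\nu_t)$ because $\nu_t$ is a nondegenerate Gaussian measure, $V(t)$ is compact; being also a contraction, its spectrum is $\{0\}$ together with at most a sequence of eigenvalues in the closed unit disk, each isolated. For the independence of $t$ I would fix $s\in[t-T,t]$ and set $R:=P_{s,t}\colon L^2(\R^n,\nu_t)\to L^2(\R^n,\nu_s)$ and $S:=P_{t-T,s}\colon L^2(\R^n,\nu_s)\to L^2(\R^n,\nu_{t-T})=L^2(\R^n,\nu_t)$; using the evolution law together with the $T$-periodicity relations $P_{a-T,b-T}=P_{a,b}$ and $\nu_{a-T}=\nu_a$ one checks $V(t)=SR$ and $V(s)=RS$. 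Since $SR$ and $RS$ have the same nonzero spectrum, with isomorphic generalized eigenspaces carried onto each other by $R$ and $S$, and since both are compact operators on infinite dimensional spaces (so $0$ lies in both spectra), $\sigma(V(s))=\sigma(V(t))$ with matching multiplicities and Jordan structure; chaining over intervals of length $T$ gives independence for all $t$.

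For statement (a) I would invoke the exponential decay from \cite{DPL06}: there are $M>0$ and $\omega<0$ such that $\|P_{s,t}(\varphi-M_t\varphi)\|_{L^2(\R^n,\nu_s)}\le M{\rm e}^{\omega(t-s)}\|\varphi\|_{L^2(\R^n,\nu_t)}$. Taking $s=t-T$ in \eqref{evmeas} and using $\nu_{t-T}=\nu_t$ yields $M_tV(t)=M_t$, so $L^2(\R^n,\nu_t)$ splits $V(t)$-invariantly as (constants)$\oplus\KER M_t$; on the constants $V(t)$ is the identity, while $\|V(t)^k|_{\KER M_t}\|=\|P_{t-kT,t}|_{\KER M_t}\|\le M{\rm e}^{\omega kT}$, so the spectral radius of $V(t)$ restricted to $\KER M_t$ is at most ${\rm e}^{\omega T}<1$. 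Hence $1$ is the only spectral value of modulus $1$, it is algebraically simple, its eigenspace is the constants, and the associated Riesz projection is the projection onto the constants along $\KER M_t$, i.e. $M_t$. (Alternatively one could use the strict positivity of the Gaussian transition kernel of $V(t)$ and a Krein--Rutman / Jentzsch argument.)

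Statements (b) and (c) rest on the invariance of polynomials under $V(t)$. Writing \eqref{Pst}--\eqref{e2.4} with $s=t-T$ gives $V(t)\varphi(x)=\E[\varphi(U(t,t-T)x+g(t,t-T)+\xi)]$ with $\xi\sim\NN_{0,Q(t,t-T)}$; thus $V(t)$ maps the finite dimensional space $\mathcal Q_d$ of polynomials of degree $\le d$ into itself and acts on $\mathcal Q_d/\mathcal Q_{d-1}$ by $\varphi_d\mapsto\varphi_d(U(t,t-T)\cdot)$, whose eigenvalues are the products of $d$ eigenvalues of $U(T,0)$, each of modulus $\le r_0$. The key point is that every generalized $\lambda$-eigenvector $\varphi$ of $V(t)$ with $|\lambda|<1$ is a polynomial of degree $\le\log|\lambda|/\log r_0$: on the finite dimensional generalized eigenspace $N_\lambda$ the operator $V(t)|_{N_\lambda}$ is invertible with $\|(V(t)|_{N_\lambda})^{-k}\|\le C(1+k)^{m}|\lambda|^{-k}$ for suitable $C,m$, so from $\varphi=V(t)^k\big((V(t)|_{N_\lambda})^{-k}\varphi\big)=P_{t-kT,t}\big((V(t)|_{N_\lambda})^{-k}\varphi\big)$ and \eqref{stime} on $[t-kT,t]$ (valid for $k$ large) one gets $\|\D_x^\alpha\varphi\|_{L^2(\R^n,\nu_t)}\le C(1+k)^{m}({\rm e}^{\omega|\alpha|T}/|\lambda|)^k\|\varphi\|_{L^2(\R^n,\nu_t)}$; letting $\omega\downarrow\omega_0(U)$ and $k\to\infty$, and recalling $r_0={\rm e}^{\omega_0(U)T}$, we obtain $\D_x^\alpha\varphi=0$ whenever $|\alpha|>\log|\lambda|/\log r_0$. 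Hence $\varphi\in\mathcal Q_{d_0}$ with $d_0=\lfloor\log|\lambda|/\log r_0\rfloor$, and since $\mathcal Q_{d_0}$ is $V(t)$-invariant the generalized eigenspace of $V(t)$ in $L^2(\R^n,\nu_t)$ coincides with that of $V(t)|_{\mathcal Q_{d_0}}$; in particular $|\lambda|\le r_0$, since otherwise $d_0=0$ and $\lambda=1$. For (c) a direct computation suffices: for $\varphi(x)=\langle\vec c,x\rangle+b$ one has $V(t)\varphi(x)=\langle U^*(t,t-T)\vec c,x\rangle+\langle\vec c,g(t,t-T)\rangle+b$, so with $\vec c\ne0$ the identity $V(t)\varphi=\lambda\varphi$ forces $U^*(t,t-T)\vec c=\lambda\vec c$ (hence $\lambda\in\sigma(U(T,0))$) and $b=\frac1{\lambda-1}\langle\vec c,g(t,t-T)\rangle$, while conversely every eigenvector $\vec c$ of $U^*(t,t-T)$ produces such a $\varphi$.

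For statement (d), let $|\lambda|=r_0$; by (b) the generalized eigenspace of $V(t)$ at $\lambda$ lies in $\mathcal Q_1$ and, $\lambda$ being $\ne1$, contains no nonzero constant. The gradient $\varphi\mapsto\nabla\varphi$ maps $\mathcal Q_1$ onto $\C^n$ with kernel the constants, it intertwines $V(t)|_{\mathcal Q_1}$ with $U^*(t,t-T)$ since $\nabla(V(t)\varphi)=U^*(t,t-T)\nabla\varphi$ on $\mathcal Q_1$, and, because $\lambda\ne1$, every (generalized) $\lambda$-eigenvector $\vec c$ of $U^*(t,t-T)$ lifts to a (generalized) $\lambda$-eigenvector of $V(t)$ by an appropriate choice of the constant term. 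Therefore $\nabla$ restricts to an isomorphism between the $\lambda$-generalized eigenspaces, and likewise between the $\lambda$-eigenspaces, of $V(t)$ and of $U^*(t,t-T)$; hence $\lambda$ is semisimple for $V(t)$ iff it is semisimple for $U^*(t,t-T)$, iff it is semisimple for $U(T,0)$ by the facts recalled before the proposition. The hard part, I expect, is making the degree bound in (b) sharp: extracting exactly the exponent $\log|\lambda|/\log r_0$ out of \eqref{stime} requires carefully combining the optimization over $\omega\in(\omega_0(U),0)$ with the polynomial factor $(1+k)^m$ coming from the Jordan structure of $V(t)|_{N_\lambda}$, and one must also keep the $SR$/$RS$ bookkeeping with the periodicity shifts rigorous.
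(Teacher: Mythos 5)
Your proposal is correct and reaches all four statements, but several of the key steps are implemented by genuinely different arguments than the ones in the paper, so a comparison is worth recording. For the $t$-independence of the spectrum you use the $SR$/$RS$ trick with $R=P_{s,t}$, $S=P_{t-T,s}$; the paper instead uses the intertwining relation $P_{s,t}V(t)=V(s)P_{s,t}$ to transport eigenfunctions — the two are equivalent in substance, though your version gives the matching of Jordan structures for free. For statement (a) the paper stays self-contained: it applies the gradient estimate \eqref{stime} to $V(t)^{n}\varphi=\lambda^{n}\varphi$ to conclude that a unimodular eigenvalue forces $D^{\alpha}\varphi=0$ for all $\alpha\neq 0$, hence $\varphi$ constant and $\lambda=1$, and simplicity then follows from (b); you instead import the exponential convergence of $P_{s,t}(\varphi-M_{t}\varphi)$ from \cite{DPL06} to bound the spectral radius of $V(t)$ on $\KER M_{t}$ strictly below $1$. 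That is legitimate (it is a prior result the paper itself quotes, and the paper's sharp rate in Proposition \ref{expdich} is not used), but it trades self-containedness for brevity. For the generalized eigenvectors in (b) the paper argues by induction on $r$ in $\KER(\lambda I-V(t))^{r}$, using the binomial-type expansion $V(t)^{n}\varphi=\lambda^{n}\varphi-\sum_{k=0}^{n-1}\lambda^{n-1-k}V(t)^{k}\psi$ so that the high derivatives of $V(t)^{n}\varphi$ are still exactly $\lambda^{n}D^{\alpha}\varphi$; your route — inverting $V(t)$ on the finite-dimensional generalized eigenspace and absorbing the polynomial factor $(1+k)^{m}$ into the geometric decay — handles all ranks at once and is arguably cleaner, at the cost of invoking the Riesz theory for the compact operator up front. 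Finally, for (d) the paper explicitly solves $(\lambda I-V(t))\psi=\varphi$ among first-order polynomials and reduces semisimplicity to whether $\vec c_{1}\in\KER(\lambda I-U^{*}(t,t-T))^{2}\setminus\KER(\lambda I-U^{*}(t,t-T))$ is attainable, whereas you observe that $\nabla$ intertwines $V(t)|_{\mathcal Q_{1}}$ with $U^{*}(t,t-T)$ and that the constant block (eigenvalue $1\neq\lambda$) cannot interfere; these are the same computation in concrete versus abstract form. No gaps; only make sure, in the $t$-independence step, to note that $P_{s,t}\varphi\neq 0$ for an eigenfunction $\varphi$ with $\lambda\neq 0$ (it follows from $P_{t-T,s}P_{s,t}\varphi=\lambda\varphi$), which both your argument and the paper's leave implicit.
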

\begin{proof}
By  estimates \eqref{stime}, $V(t)$ maps continuously $L^2(\R^n,\nu_t)$ into 
$H^{1}(\R^n,\nu_t)$, which is compactly embedded in 
$L^2(\R^n,\nu_t)$ because $\nu_t$ is a Gaussian measure with 
nondegenerate covariance matrix. Therefore it is a  compact operator, and its
spectrum consists of $0$ and of isolated nonzero eigenvalues. 

From the equality
$$P_{s,t}V(t) = V(s)P_{s,t}, \quad s<t,$$
it follows that if $\varphi$ is an eigenfunction of $V(t)$ with eigenvalue $\lambda 
\neq 0$, then   $P_{s,t}\varphi $ is an eigenfunction of $V(s)$ with eigenvalue 
 $\lambda$. It follows that the spectrum of $V(t)$ is independent of $t$. 

 Let $\varphi$ be again  an eigenfunction of $V(t)$ with eigenvalue $\lambda 
 \neq 0$. 
 Then $P_{t-nT,t}\varphi = (V(t))^n \varphi = \lambda^n \varphi$ for 
 each $n\in \N$, so 
 that,  by estimate \eqref{stime},
\begin{equation}
    \label{autovalori}
|\lambda|^n\|D^\alpha \varphi\|_{L^2(\R^n,\nu_t)}\leq C{\rm
e}^{\omega|\alpha|nT}\|\varphi\|_{L^2(\R^n,\nu_t)},\quad n\in\N,
\end{equation}
for $\omega\in(\omega_0(U),0)$ and for each multi-index $\alpha$.  Therefore, $|\lambda |\leq 1$ and 
$D^\alpha \varphi=0$ if
$|\alpha|> \log |\lambda|/\omega_0(U)T$.  This proves that the 
eigenspace consists of polynomials with degree $\leq  \log |\lambda|/
\log r_{0}$.

To complete the proof of statement (b) we argue by recurrence. Assume 
that for some $r\in \N$ the kernel of $(\lambda I - V(t))^{r}$ 
consists of polynomials with degree $\leq  \log |\lambda|/\log r_0$, and let $\varphi \in$ Ker$\,(\lambda I - 
V(t))^{r+1}$. Then the function $\psi := \lambda \varphi - 
V(t)\varphi$ is a polynomial with degree $\leq \log |\lambda|/\log r_0$, as well as  $V(t)^{k}\psi$ for each $k\in 
\N$. Indeed, each $P_{s,t}$ maps polynomials of degree $n$ into 
polynomials of degree $\leq n$, for each $n\in \N$. Since
$$V(t)^{n}\varphi = \lambda^{n}\varphi - 
\sum_{k=0}^{n-1}\lambda^{n-1-k}V(t)^{k}\psi, \quad n\in \N,$$
then $D^{\alpha }(V(t)^{n}\varphi) =  \lambda^{n}D^{\alpha 
}\varphi $, for $|\alpha| > \ \log |\lambda|/\log r_0$. Using \eqref{autovalori} as before we 
see that $\varphi $ is a polynomial with degree $\leq  \log |\lambda|/\log r_0$. 
This proves statement (b). 

 \vspace{1mm}

Now we can prove statement  (a). 
Estimate \eqref{autovalori} shows that if 
$V(t)\varphi = \lambda\varphi$ and $|\lambda|=1$, then $\varphi$ is 
constant, and since $V(t)$ is the identity on constant functions, 
we have $\lambda=1$. By statement (b), also the kernel of $(I-V(t))^{2}$ consists 
of the constant 
functions, so that it coincides with the kernel of $I-V(t)$, and  $1$ is a simple eigenvalue.

The projection $M_{t}$ maps 
$L^2(\R^n,\nu_{t })$ onto the kernel of $I -V(t)$. Moreover, it 
commutes with $V(t)$, since for each $\varphi \in L^2(\R^n,\nu_{t })$ 
we have
\begin{align*}
    V(t)M_{t}\varphi & =  M_{t}\varphi = \displaystyle{\int_{\R^{n}} }
\varphi(x)   \nu_t(\ddc x) 
 =  \int_{\R^{n}} 
(V(t)\varphi)(x)   \nu_{t-T}(\ddc x)\\
&= \displaystyle{\int_{\R^{n}} }
(V(t)\varphi)(x)   \nu_{t}(\ddc x)
 =  M_{t}V(t)\varphi .
\end{align*}
Since $1$ is a simple eigenvalue, then $M_{t}$ is the associated spectral projection. 

\vspace{1mm}

Let us prove statement (c). 
Let $\varphi(x)=c+\langle \vec c, x\rangle $ with $c\in\C$ and $\vec
c\in\C^n$. Then
\begin{align*}
(V(t)\varphi)(x)=c+\langle \vec c ,U(t,t-T)x\rangle + \langle \vec 
c,g(t,t-T)\rangle.
\end{align*}
Hence, $V(t)\varphi=\lambda \varphi$ iff $\lambda\in\sigma(U(t,t-T))$,
$\vec c $ is an eigenvector of $U^{*}(t,t-T)$ with eigenvalue $\lambda$ and 
$c=\langle \vec c ,g(t,t-T)\rangle /(\lambda-1)$.

\vspace{1mm}

Note that $U^{*}(t,t-T)$ has at least one eigenvalue $\lambda$ with modulus 
equal to $r_{0}$. By statement (b), the corresponding generalized eigenspace  of $V(t)$ consists 
of  first order polynomials. Let  $\varphi(x) = c +\langle \vec c , 
x\rangle $ be a first order polynomial in the kernel of 
$\lambda I - V(t)$. The equation $(\lambda I -V(t))\psi = \varphi$ 
may be solved only by first order polynomials. 
If $\psi(x)  = c_{1}+\langle \vec c_{1}, x\rangle $, 
we have $(\lambda I -V(t))\psi = \varphi$ iff 
$$(\lambda -1)c_{1}+\langle \lambda \vec c_{1}, x\rangle - 
\langle  \vec c_{1}, U(t,t-T) x + g(t,t-T) \rangle  = 
c +\langle \vec c , x\rangle , \quad x\in \R^{n},$$
that is, $(\lambda -1)c_{1} -\langle \vec c_{1},g(t,t-T) \rangle = 
c $ and $  \lambda \vec c_{1}  -  U^{*}(t,t-T)\vec c_{1}   =   \vec c $. Since 
$\vec c $ is an eigenvector of $U^{*}(t,t-T)$ and 
$c =\langle \vec c  ,g(t,t-T)\rangle /(\lambda-1)$, we have 
$(\lambda I -V(t))\psi = \varphi$ iff $\vec  c_{1}\in $ Ker$\,(\lambda I 
-U^{*}(t,t-T))^{2}$ $\setminus $ Ker$\,(\lambda I 
-U^{*}(t,t-T)) $, and $c_{1} = (c + \langle  \vec 
c_{1},g(t,t-T)\rangle )/(\lambda -1)$. Statement (d) follows. 
\end{proof}

Statements (a) and (b) are a generalization to the periodic 
nonautonomous case of the results of 
\cite[Proposition 3.2]{MPP02} concerning the spectral properties of 
elliptic Ornstein-Uhlenbeck operators.

As a consequence of Proposition \ref{evolutionspectrum} we describe 
the asymptotic behavior of $P_{s,t}\varphi$ for each $\varphi\in L^2(\R^n,\nu_{t })$.

\begin{proposition}
\label{expdich}
\begin{itemize} 
\item[(i)] For each $\omega \in (\omega_{0}(U),0)$ there exists $M=M(\omega)$ such that 
\begin{equation}
    \label{asintotico}
    \|P_{s,t}(\varphi - M_{t}\varphi)\|_{L^{2}(\R^{n}, \nu_{s})} 
    \leq Me^{\omega(t-s)}\|\varphi\|_{L^{2}(\R^{n}, \nu_{t})} , \quad 
    s<t, \; \varphi\in L^2(\R^n,\nu_{t }). 
\end{equation}
\item[(ii)] For each $\omega < \omega_{0}(U)$ there is no $M $ such 
that   \eqref{asintotico} holds. 
\item[(iii)] Estimate \eqref{asintotico} holds  for $\omega = 
\omega_{0}(U)$ iff all the eigenvalues of $U(T,0)$ with modulus equal 
to $r_{0}$ are semisimple.
\end{itemize}
\end{proposition}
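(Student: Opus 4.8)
The plan is to read off (i)--(iii) from the spectral picture of the Poincar\'e operators $V(t)=P_{t-T,t}$ established in Proposition~\ref{evolutionspectrum}, using throughout three consequences of $T$-periodicity: $\nu_{t+T}=\nu_t$ and $M_{t+T}=M_t$; the identity $P_{t-nT,t}=V(t)^n$ for $n\in\N$; and the intertwining relation $(I-M_s)P_{s,t}=P_{s,t}(I-M_t)$, which follows from \eqref{evmeas} (giving $M_sP_{s,t}=M_t$) and from $P_{s,t}1=1$ (giving $P_{s,t}M_t=M_sP_{s,t}$). I shall also use the contraction property $\|P_{s,t}\|_{\cL(L^2(\R^n,\nu_t),L^2(\R^n,\nu_s))}\le1$, and the fact that, $M_t$ being the spectral projection of $V(t)$ onto the eigenvalue $1$, it annihilates every generalized eigenfunction of $V(t)$ belonging to an eigenvalue $\neq1$.

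Part (i) does not require the full eigenvalue analysis. The Poincar\'e inequality for the Gaussian measure $\nu_s$ holds with constant $\|Q(s,-\infty)\|$, which is $T$-periodic and continuous, hence bounded by some $c_P$ independent of $s$; combining it with the gradient estimate \eqref{stime}, for $t-s\ge1$ one gets
$$\|P_{s,t}(\varphi-M_t\varphi)\|_{L^2(\R^n,\nu_s)}=\|(I-M_s)P_{s,t}\varphi\|_{L^2(\R^n,\nu_s)}\le c_P\|\D_xP_{s,t}\varphi\|_{L^2(\R^n,\nu_s)}\le c_P\,C(\omega)e^{\omega(t-s)}\|\varphi\|_{L^2(\R^n,\nu_t)},$$
while for $0<t-s<1$ the contraction property gives $\|P_{s,t}(\varphi-M_t\varphi)\|_{L^2(\R^n,\nu_s)}\le\|\varphi\|_{L^2(\R^n,\nu_t)}\le e^{-\omega}e^{\omega(t-s)}\|\varphi\|_{L^2(\R^n,\nu_t)}$. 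This proves \eqref{asintotico} for every $\omega\in(\omega_0(U),0)$.

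For (ii), fix $\omega<\omega_0(U)$ and choose an eigenvalue $\lambda$ of $U(T,0)$ with $|\lambda|=r_0$. By Proposition~\ref{evolutionspectrum}(c) there is a non-constant first order polynomial $\varphi$ with $V(t)\varphi=\lambda\varphi$; since $\lambda\neq1$ we have $M_t\varphi=0$, so $P_{t-nT,t}(\varphi-M_t\varphi)=V(t)^n\varphi=\lambda^n\varphi$, whose $L^2(\R^n,\nu_{t-nT})$-norm equals $r_0^{\,n}\|\varphi\|_{L^2(\R^n,\nu_t)}$ (recall $\nu_{t-nT}=\nu_t$); were \eqref{asintotico} true with this $\omega$ and some $M$ we would get $e^{(\omega_0(U)-\omega)nT}\le M$ for all $n$, which is absurd. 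The ``only if'' part of (iii) is analogous: if some eigenvalue $\lambda$ of $U(T,0)$ with $|\lambda|=r_0$ is not semisimple, then by Proposition~\ref{evolutionspectrum}(d) and the computation in its proof there are first order polynomials $\varphi$ (non-constant) and $\psi$ with $V(t)\varphi=\lambda\varphi$ and $(\lambda I-V(t))\psi=\varphi$; both lie in $\ker(\lambda I-V(t))^2$ with $\lambda\neq1$, so $M_t\varphi=M_t\psi=0$. Then $V(t)^n\psi=\lambda^n\psi-n\lambda^{n-1}\varphi$, hence $\|P_{t-nT,t}(\psi-M_t\psi)\|_{L^2(\R^n,\nu_{t-nT})}\ge nr_0^{\,n-1}\|\varphi\|_{L^2(\R^n,\nu_t)}-r_0^{\,n}\|\psi\|_{L^2(\R^n,\nu_t)}$ grows like $nr_0^{\,n}$, and \eqref{asintotico} fails for $\omega=\omega_0(U)$.

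For the ``if'' part of (iii) one must sharpen the argument of (i), since $\omega=\omega_0(U)$ is borderline while \eqref{stime} only yields exponents strictly above $\omega_0(U)$. Since $V(t)$ is compact, write $V(t)=M_t+W_1(t)+W_2(t)$ with $W_1(t)=V(t)P_1(t)$, $P_1(t)$ the spectral projection of $V(t)$ onto the finitely many eigenvalues of modulus $r_0$, and $W_2(t)=V(t)(I-M_t-P_1(t))$. By hypothesis and Proposition~\ref{evolutionspectrum}(d) those eigenvalues are semisimple, so $W_1(t)$ acts on the finite dimensional space $P_1(t)L^2(\R^n,\nu_t)$ (consisting of first order polynomials, by Proposition~\ref{evolutionspectrum}(b)) as a diagonalizable operator all of whose eigenvalues have modulus exactly $r_0$, whence $\|W_1(t)^n\|\le K(t)r_0^{\,n}$; on the complement the spectral radius of $V(t)$ equals some $r_0'<r_0$ independent of $t$, so $\|W_2(t)^n\|\le C_2(t)r_0^{\,n}$, and altogether $\|V(t)^n(I-M_t)\|_{\cL(L^2(\R^n,\nu_t))}\le C(t)r_0^{\,n}$. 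Given $s<t$, write $t-s=nT+\sigma$ with $\sigma\in[0,T)$; splitting $P_{s,t}=P_{s,s+\sigma}V(t)^n$ (here $s+\sigma=t-nT$, so $\nu_{s+\sigma}=\nu_t$), applying $(I-M_t)$ and using the contraction property gives
$$\|P_{s,t}(\varphi-M_t\varphi)\|_{L^2(\R^n,\nu_s)}\le\|V(t)^n(I-M_t)\varphi\|_{L^2(\R^n,\nu_t)}\le C(t)r_0^{\,n}\|\varphi\|_{L^2(\R^n,\nu_t)}\le C(t)e^{-\omega_0(U)T}e^{\omega_0(U)(t-s)}\|\varphi\|_{L^2(\R^n,\nu_t)}.$$
The main obstacle is that $C(t)$ must be bounded in $t$: by $T$-periodicity it suffices to bound it on $[0,T]$, and although the eigenvalues of $V(t)$ are $t$-independent and the relevant projections $P_1(t)$ and first order eigenpolynomials of Proposition~\ref{evolutionspectrum}(c),(d) depend continuously on $t$, one still has to compare operators acting on the distinct spaces $L^2(\R^n,\nu_t)$ --- e.g.\ by conjugating with the affine change of variables that normalizes $\nu_t$. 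Once this uniformity is in place (iii) follows, and running the same bookkeeping with any $\rho>r_0$ in place of $r_0$ gives an alternative proof of (i).
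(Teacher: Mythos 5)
Parts (ii) and the ``only if'' half of (iii) coincide in substance with the paper's proof (an eigenfunction of $V(t)$ of modulus $r_0$ forces $e^{\omega_0(U)n T}\le Me^{\omega nT}$; a Jordan chain produces growth $nr_0^n$), and they are correct. Part (i), however, is argued by a genuinely different and more elementary route: you combine the Gaussian Poincar\'e inequality for $\nu_s$ (with constant $\sup_s\|Q(s,-\infty)\|$, finite by continuity and periodicity) with the first--order derivative estimate in \eqref{stime}, whereas the paper factors $P_{s,t}=P_{s,mT}V(0)^{k-m}P_{kT,t}$ with $m=[s/T]+1$, $k=[t/T]$ and invokes Gelfand's formula for the spectral radius of $V(0)$ on the zero--mean subspace $X_0$. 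Your argument is shorter and, worth noting, uses periodicity only to bound the Poincar\'e constant; since \eqref{stime} and the uniform bound $\|Q(s,-\infty)\|\le M^2C^2/2|\omega|$ hold for general bounded coefficients, it appears to give the rate $\omega$ for every $\omega\in(\omega_0(U),0)$ also in the setting of \S 2.4, where the paper only obtains $c_0$ and calls the optimal rate open --- so this observation deserves to be checked carefully, but I see no error in it.

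The ``if'' half of (iii) has a real gap, which you acknowledge but do not close: after decomposing $V(t)=M_t+W_1(t)+W_2(t)$ you need $\|V(t)^n(I-M_t)\|_{\cL(L^2(\R^n,\nu_t))}\le C(t)r_0^n$ with $\sup_{t\in[0,T]}C(t)<\infty$, and the constant $C(t)$ involves the condition number of a diagonalizing basis of $W_1(t)$ and the Gelfand constant for $W_2(t)$, neither of which is obviously uniformly controlled as the underlying space $L^2(\R^n,\nu_t)$ varies; asserting continuity of $t\mapsto P_1(t)$ across these spaces is exactly the point that would need proof. The difficulty is an artifact of anchoring the power of the Poincar\'e operator at the right endpoint $t$. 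The paper avoids it by anchoring at time $0$: writing $P_{s,t}=P_{s,mT}V(0)^{k-m}P_{kT,t}$, the outer factors are contractions intertwining the mean--value projections, so the whole statement reduces to the single--operator bound $\|V(0)^jQ_0\|_{\cL(L^2(\R^n,\nu_0))}\le Cr_0^j$, which follows at once from semisimplicity of the modulus--$r_0$ eigenvalues of the one fixed compact operator $V(0)$, together with a Gelfand bound for $V(0)(I-Q_0-M_0)$ whose spectral radius is at most $\max\{r_1,r_0^2\}<r_0$. If you replace your splitting $t-s=nT+\sigma$ by this sandwich, your decomposition applied to $V(0)$ alone closes the argument, and, as you note, running it with any $\rho>r_0$ recovers (i) as well.
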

\begin{proof}
(i) Let us split $L^2(\R^n,\nu_{t })$ as the direct sum 
$L^2(\R^n,\nu_{t }) = X_{t} \oplus X_{c}$, where $X_{t}$ consists of 
the functions with zero mean value and $X_{c}$ consists of the 
constant functions.  The orthogonal projection on $X_{c}$ is  
$M_{t}$, and by Proposition \ref{evolutionspectrum} (a)
it coincides with the spectral projection associated to the 
eigenvalue $1$ of $V(t)$. The spectral radius of the part of $V(t)$ 
in $X_{t}$ does not exceed $r_{0}$ by Proposition 
\ref{evolutionspectrum} (b), but in fact it is equal to 
$r_{0}$, because for each $\lambda \in  \sigma(U(T,0)) $ 
with modulus $r_{0}$, $\lambda $ is also an eigenvalue of 
$V(t)$ by Proposition \ref{evolutionspectrum} (c). 

From now on we can proceed as in the standard case of constant 
underlying space (e.g, 
\cite[\S 7.2]{Hen81}). For $\varphi \in X_{t}$ and 
$t-s>2T$ set $m= [s/T]+1$, $k= [t/T]$. Since $P_{mT,kT} = V(0)^{k-m}$, 
then 
$$\|P_{s,t}\varphi \|_{L^{2}(\R^{n}, \nu_{s})}  = 
\|P_{s,mT}V(0)^{k-m}P_{kT,t}\varphi \|_{L^{2}(\R^{n}, \nu_{s})}  \leq 
\|V(0)^{k-m}\|_{\LL (X_{0})}\|\varphi \|_{L^{2}(\R^{n}, \nu_{t})},$$ 
where $(k-m)T \geq t-s-2T$. 
Since $\lim _{h\to \infty}\|V(0)^{h}\|_{\LL (X_{0})} = r_{0} =
e^{\omega_{0}(U)T}$, it follows that for each $\omega >\omega_{0}(U)$ 
there exists $M=M(\omega)$ such that 
$$\|P_{s,t}\varphi \|_{L^{2}(\R^{n},\nu_{s})}\leq Me^{\omega(t-s)} \|\varphi 
\|_{L^{2}(\R^{n}, \nu_{t})}, \quad s<t ,$$
which is \eqref{asintotico} in our case, because $M_{t}\varphi=0$. 

For general $\varphi \in L^{2}(\R^{n}, \nu_{t})$, applying the above 
estimate to $\varphi - M_{t}\varphi$ gives 
\eqref{asintotico}.  

\vspace{1mm}
\noindent (ii) By Proposition \ref{evolutionspectrum}, $V(t)$ has 
some eigenvalue $\lambda$ with modulus $r_{0}$. If 
$\varphi$ is an eigenfunction, then it belongs to $X_{t}$ so that 
$M_{t}\varphi=0$. Moreover, for $s = t-kT$ we have 
$P_{s,t}\varphi = \lambda^{k}\varphi $ so that 
$\|P_{s,t}(\varphi - M_{t}\varphi)\|_{L^{2}(\R^{n}, \nu_{s})} 
= \| P_{s,t}\varphi \|_{L^{2}(\R^{n}, \nu_{s})}  = 
e^{\omega_{0}(U)(t-s)}$, and (ii) follows. 

\vspace{1mm}
\noindent (iii)
By proposition \ref{evolutionspectrum}(b)(c), the eigenvalues of 
$V(t)$ with  modulus in
$(r_{0}^{2},0)$ coincide with the eigenvalues of $U(t+T,t)$ 
with  modulus in $(r_{0}^{2},0)$. 
Therefore, setting $r_{1}= \max\{|\lambda|:$ $\lambda \in \sigma (U(T,0)),$ 
$|\lambda|< r_{0}\}$, $V(t)$ has no eigenvalues 
 with modulus in $(\max 
 \{r_{1},r_{0}^{2}\},r_{0})$, while the part of the 
spectrum of $V(t)$ with modulus equal to $r_{0}$ consists 
of eigenvalues of $U( T, 0)$. 
 Let $Q_{t}$ be the associated 
spectral projection, and let us further decompose $X_{t}$ as the 
direct sum   $Q_{t}(X_{t})\oplus (I-Q_{t})(X_{t})$. Note that for 
$s<t$, $P_{s,t}$ maps 
$Q_{t}(X_{t})$ into $Q_{s}(X_{s})$ and $(I-Q_{t})(X_{t})$ into 
$(I-Q_{s})(X_{s})$. 
The spectral radius of 
$V(0)(I-Q_{0}-M_{0})$ does not exceed $\max 
 \{r_{1},r_{0}^{2}\}$, so that arguing as in the proof of statement (i) we obtain that 
 for each $\omega \in (\log  \max 
 \{r_{1},r_{0}^{2}\}, \omega_{0})$
there is $M>0$ such that
$$ \|P_{s,t}(I-Q_{0} - M_{0}) \|_{\mathcal{L}(L^{2}(\R^{n}, \nu_{t}), L^{2}(\R^{n}, \nu_{s}))} 
\leq M e^{\omega (t-s)}, \quad s<t.$$
Assume that all the  eigenvalues of $U(T,0)$ with modulus $r_{0}$ are 
semisimple. Then  
by Proposition \ref{evolutionspectrum}(d) they are  semisimple eigenvalues 
of $V(0)$. Therefore there is $C>0$ such that 
$$ \|V(0)^{k} Q_{0}\|_{\mathcal{L}(L^{2}(\R^{n}, \nu_{0}))} 
\leq C r_{0}^{k}, \quad k\in \N.$$
Arguing again as in the proof of statement (i), we obtain that 
\eqref{asintotico} holds also with $\omega = \omega_{0}(U)$. 

If one of the eigenvalues $\lambda$ of $U(T,0)$ with modulus $r_{0}$
is not semisimple, again by proposition \ref{evolutionspectrum}(d) it  is a 
non-semisimple eigenvalue of $V(t)$. Then there are nonzero 
functions $\varphi_{0}$, $\psi_{0} \in X_{t}$ such that $(\lambda I-V(t))\varphi_{0} = 
\psi$, $(\lambda I-V(t))\psi _{0}=0$. It follows that $V(t)^{k}\varphi_{0} = 
\lambda^{k}\varphi_{0} - k\psi_{0}$, for each $k\in \N$. Arguing as in the 
proof of statement (ii) we see that \eqref{asintotico} cannot hold 
for $\varphi = \varphi_{0}$ and $\omega = \omega_{0}(U)$. 
\end{proof}

Proposition \ref{expdich} establishes a sort of exponential dichotomy 
with any exponent $\omega \in (\omega_{0}(U),0)$ for $P_{s,t}$. 
Indeed, the  projections 
$$\varphi\mapsto  M_{t}\varphi , \quad t\in \R,$$
map each $L^{2}(\R^{n}, \nu_{t})$ into the common one-dimensional subspace 
$X_{c}$ of the constant functions, and satisfy

\begin{itemize}
    \item[(a)] $M_{s}P_{s,t} = P_{s,t}M_{t}$, for $ s<t$; 
    \item[(b)] $P_{s,t}:$ Range $M_{t} \mapsto$ Range $M_{s}$ is 
    invertible (in fact, it is the identity in $X_{c}$); 
    \item[(c)] $\| P_{s,t}(I -M_{t})\|_{\LL(L^{2}(\R^{n}, \nu_{t}), 
    L^{2}(\R^{n}, \nu_{s})) } \leq Me^{\omega(t-s)}$, $s<t$. 
\end{itemize}


\subsection{Spectral gap of $G_\#$ and  asymptotic behavior of $(\cP_\tau^{\#})_{\tau\geq0}$.}

 Since $D(G_\#)$ is compactly embedded in $L^2_\#((0,T)\times\R^n,\nu)$, see 
\cite{GL07}, the spectrum of $G_\#$ contains eigenvalues only. This allows us to do
 further investigations of the spectrum of $G_\#$.

The next proposition shows that all the generalized eigenfunctions of 
$G_\#$ have a special structure.

\begin{proposition}
    \label{polinomi}
Assume that $u\in D(G_\#^r)$ satisfies $(\lambda I-G_\#)^ru=0$ for some
$\lambda\in\C$ and some $r\in\N$. Then
\begin{align*}
u(t,x)=\sum\limits_{|\alpha|\leq K}c_\alpha(t)x^\alpha,
\end{align*}
where $K\leq \frac{\RE\lambda}{\omega_0(U)}$ and $c_\alpha\in
H^1_\#(0,T)$.
\end{proposition}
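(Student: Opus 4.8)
The plan is to combine the polynomial dependence on $\tau$ of the orbit $\tau\mapsto\cP^\#_\tau u$ of a generalized eigenfunction with the smoothing estimates \eqref{stime}. The first ingredient is the standard semigroup identity: if $u\in D(G_\#^r)$ and $(\lambda I-G_\#)^ru=0$, then $v(\tau):=e^{-\lambda\tau}\cP^\#_\tau u$ satisfies $v^{(k)}(\tau)=e^{-\lambda\tau}\cP^\#_\tau(G_\#-\lambda I)^ku$, so $v^{(r)}\equiv0$ and therefore
$$
\cP^\#_\tau u=e^{\lambda\tau}\sum_{k=0}^{r-1}\frac{(-\tau)^k}{k!}\,u_k,\qquad u_k:=(\lambda I-G_\#)^ku\in D(G_\#),\quad\tau\geq0 .
$$
By \eqref{rep}, for a.e.\ $s$ this reads $\big(P_{s,s+\tau}u(s+\tau,\cdot)\big)(x)=e^{\lambda\tau}\sum_{k=0}^{r-1}\frac{(-\tau)^k}{k!}u_k(s,x)$.

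Next I would fix a multi-index $\beta$, apply $\D_x^\beta$, and invoke \eqref{stime} with $t-s=\tau>1$: for every $\omega\in(\omega_0(U),0)$,
$$
\|\D_x^\beta(\cP^\#_\tau u)(s,\cdot)\|_{L^2(\R^n,\nu_s)}\leq C\,e^{\omega|\beta|\tau}\,\|u(s+\tau,\cdot)\|_{L^2(\R^n,\nu_{s+\tau})} .
$$
Squaring, integrating over $s\in(0,T)$, and using that $\sigma\mapsto\|u(\sigma,\cdot)\|^2_{L^2(\R^n,\nu_\sigma)}$ is $T$-periodic (because $u(\sigma+T,\cdot)=u(\sigma,\cdot)$ and $\nu_{\sigma+T}=\nu_\sigma$), the right-hand side is bounded by $C''e^{2\omega|\beta|\tau}$. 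Combining with the identity above gives, for $\tau>1$,
$$
\Big\|\sum_{k=0}^{r-1}\frac{(-\tau)^k}{k!}\,\D_x^\beta u_k\Big\|_{L^2((0,T)\times\R^n,\,\nu_s\,ds)}\leq C'\,e^{(\omega|\beta|-\RE\lambda)\tau} .
$$
If $|\beta|>\RE\lambda/\omega_0(U)$ — which, as $\omega_0(U)<0$, is the same as $|\beta|\,\omega_0(U)<\RE\lambda$ — one may pick $\omega\in(\omega_0(U),0)$ with $\omega|\beta|<\RE\lambda$, so the bound tends to $0$ as $\tau\to\infty$. Testing the left-hand side against any $\phi\in C_c^\infty((0,T)\times\R^n)$ (on whose compact support the Gaussian densities of the $\nu_s$ are bounded below, so $L^2(\nu_s\,ds)$ locally controls $L^2(dt\,dx)$) produces a polynomial in $\tau$ that tends to $0$, hence vanishes identically; thus $\D_x^\beta u_k=0$ in $\mathcal{D}'(\R^{1+n})$ for every $k$. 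Taking $k=0$ and letting $\beta$ run over all multi-indices with $|\beta|>\RE\lambda/\omega_0(U)$, and slicing in $t$, it follows that $u(t,\cdot)$ is, for a.e.\ $t$, a polynomial in $x$ of degree $K\leq\RE\lambda/\omega_0(U)$, i.e. $u(t,x)=\sum_{|\alpha|\leq K}c_\alpha(t)x^\alpha$.

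It remains to see that $c_\alpha\in H^1_\#(0,T)$. Since the covariances $Q(t,-\infty)$ are uniformly positive definite and uniformly bounded (lower bound from \eqref{invertb} applied on $[t-1,t]$, upper bound from \eqref{exp_decay}), on the finite-dimensional space of polynomials of degree $\leq K$ the norm $\|\cdot\|_{L^2(\R^n,\nu_t)}$ is, uniformly in $t$, comparable to the Euclidean norm of the coefficient vector. Hence $u\in L^2_\#(\R^{1+n},\nu)$ forces $c_\alpha\in L^2_\#(0,T)$; moreover $\cK(t)u$ is again a polynomial in $x$ of degree $\leq K$ whose coefficients are linear combinations of the $c_\beta$ with coefficients bounded in $t$ (built from the bounded data $A,B,f$), so $\cK(t)u\in L^2_\#$ and thus $u_t=G_\#u-\cK(t)u\in L^2_\#$; applying the same comparison to $u_t$ yields $c_\alpha'\in L^2_\#(0,T)$, whence $c_\alpha\in H^1_\#(0,T)$. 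The two points requiring some attention are the presence of the two distinct measures $\nu_s,\nu_{s+\tau}$ in \eqref{stime}, which is handled by averaging over one period and invoking periodicity of $s\mapsto\|u(s,\cdot)\|_{L^2(\R^n,\nu_s)}$, and the merely distributional spatial regularity of the $u_k$, which is circumvented by testing against $C_c^\infty$ and using that a polynomial in $\tau$ which is $O(e^{-c\tau})$ must be zero.
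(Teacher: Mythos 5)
Your proof is correct and follows essentially the same route as the paper's: the orbit formula $\cP^\#_\tau u = e^{\lambda\tau}\sum_{k}\frac{(-\tau)^k}{k!}(\lambda I-G_\#)^k u$ combined with the gradient estimates \eqref{stime}. The only differences are organizational — you replace the paper's induction on $r$ by the observation that a polynomial in $\tau$ which is $O(e^{-c\tau})$ must vanish identically, and you supply the verification that $c_\alpha\in H^1_\#(0,T)$, which the paper leaves implicit.
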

\begin{proof}
Let us start with $r=1$.
Since $G_{\#}u=\lambda u$, we have $\cP^\#_\tau u={\rm
e}^{\lambda\tau}u$ for $\tau\geq0$. Therefore, by
estimates \eqref{stime}, for any $\omega>\omega_0(U)$ there
exists $C>0$, such that for any multi-index $\alpha$,
\begin{align*}
\|{\rm e}^{\lambda\tau}D^\alpha _{x}u\|_{L^2_\#( \R^{1+n},\nu)}=
\|D^\alpha _{x}\cP_\tau^\#u\|_{L^2_\#( \R^{1+n},\nu)}
\leq C {\rm e}^{\omega|\alpha|\tau}\|u\|_{L^2_\#( \R^{1+n},\nu)},\quad \tau\geq 1.
\end{align*}
Letting $\tau\to\infty$, we obtain
\begin{align*}
\|D^\alpha _{x}u\|_{L^2_\#( \R^{1+n},\nu)}=0
\end{align*}
for $\RE\lambda>\omega|\alpha|$. This implies that $u(t,\cdot)$ is
a polynomial of degree less than or equal to $ |\RE\lambda|/\omega$
for any $ \omega \in (\omega_0(U),0)$. 

Suppose now that the assertion holds
for $r= 1, \ldots, r_{0}$ and assume that $u\in D(G_\# ^{r_{0}+1})$ 
satisfies
$(\lambda I -G_\#)^{r_{0}+1}u=0$ for some $\lambda\in\C$. Then,
\begin{align*}
\cP^\#_\tau u={\rm e}^{\lambda
\tau}\sum\limits_{j=0}^{r_{0}}\frac{\tau^j}{j!}(\lambda-G_\#)^ju,\quad
\tau\geq1.
\end{align*}
By the induction hypothesis, $(\lambda I -G_\#)^ju$ is a polynomial of 
degree $\leq  \RE\lambda /\omega_0(U) $, so that  $D^{\alpha}(\lambda 
I -G_\#)^ju=0$ for 
$1\leq j\leq r_{0}$ and $|\alpha|> \RE\lambda/\omega_0(U)$. So, we obtain
\begin{align*}
D^\alpha_{x}\cP^{\#}_\tau u={\rm e}^{\lambda 
\tau}D^\alpha_{x}u,\quad\tau\geq 1,
\end{align*}
and the assertion for $r_{0}+1$ follows as above.
\end{proof}

Proposition \ref{polinomi} implies that the eigenfunctions with  
eigenvalues $\lambda$ such that Re$\,\lambda \in (2\omega_{0}(U),0]$ 
are first or zero order polynomials with respect to $x$, with 
coefficients possibly depending on $t$. In the next 
proposition we characterize  the eigenvalues that have eigenfunctions 
of this type.

\begin{proposition}
\label{eigen}
Assume that $u(t,x)=c(t)+\sum_{i=1}^nc_i(t)x_i$ with $c$, $c_i\in
H^1_{\#}(0,T)\setminus \{0\}$ satisfies $G_\# u=\lambda u$ for some $\lambda\in\C$.
Then 
\begin{equation}
    \label{lambda}
\lambda\in\left( \frac1T\log\sigma(U(T,0))+\frac{2\pi i}T\Z\right)\cup\frac{2\pi
i}T\Z.
\end{equation}
Conversely, for each $\lambda$ satisfying \eqref{lambda} there is a 
function $u\neq 0$ as above such that $G_\# u=\lambda u$. 
\end{proposition}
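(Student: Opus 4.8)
The plan is to substitute the ansatz $u(t,x)=c(t)+\sum_{i=1}^n c_i(t)x_i$ directly into the equation $G_\# u = \lambda u$, i.e. $u_t + \cK(t)u = \lambda u$, and read off a system of ODEs for the coefficients. Since $u$ is affine in $x$, the second-order term $\tfrac12\trace(B(t)B^*(t)\D_x^2 u)$ vanishes, and $\langle A(t)x+f(t),\D_x u\rangle = \langle A(t)x+f(t),\vec c(t)\rangle$ where $\vec c(t)=(c_1(t),\dots,c_n(t))$. Collecting the terms linear in $x$ and the constant terms separately, the equation $G_\# u=\lambda u$ becomes equivalent to the pair
\begin{align*}
\vec c\,'(t) + A^*(t)\vec c(t) &= \lambda\, \vec c(t),\\
c'(t) + \langle f(t),\vec c(t)\rangle &= \lambda\, c(t),
\end{align*}
with $\vec c$, $c\in H^1_\#(0,T)$. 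So the whole problem reduces to describing for which $\lambda\in\C$ this linear periodic system has a nonzero periodic solution.

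Next I would analyze the first (vector) equation, which is decoupled from $c$. Writing $\vec c(t)=e^{\lambda t}\vec d(t)$ turns it into $\vec d\,'(t) = -A^*(t)\vec d(t)$, whose solutions are $\vec d(t)=V^*(t,0)^{-1}\vec d(0)$ for the appropriate adjoint evolution operator; more precisely $\vec c(t)=e^{\lambda t}U^*(0,t)\vec c(0)$ is the general solution (using that $t\mapsto U^*(0,t)\xi$ solves $\eta' = -A^*(t)\eta$, which follows by differentiating $U(t,0)U(0,t)=I$). The $T$-periodicity condition $\vec c(T)=\vec c(0)$ then reads $e^{\lambda T}U^*(0,T)\vec c(0)=\vec c(0)$, i.e. $\vec c(0)$ is an eigenvector of $U^*(0,T)=U^*(T,0)^{-1}$ with eigenvalue $e^{-\lambda T}$, equivalently $\vec c(0)$ is an eigenvector of $U^*(T,0)$ with eigenvalue $e^{\lambda T}$. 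Since $U^*(T,0)$ and $U(T,0)$ have the same spectrum, a nonzero periodic $\vec c$ exists iff $e^{\lambda T}\in\sigma(U(T,0))$, i.e. $\lambda\in\tfrac1T\log\sigma(U(T,0))+\tfrac{2\pi i}{T}\Z$. If instead $\vec c\equiv 0$, the scalar equation forces $c'(t)=\lambda c(t)$ with $c$ periodic and nonzero, so $e^{\lambda T}=1$, i.e. $\lambda\in\tfrac{2\pi i}{T}\Z$. Taking the union gives exactly \eqref{lambda} for the necessity direction (noting that if both $\vec c\neq0$ and $c\neq0$ one still lands in the first set).

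For the converse, given $\lambda$ of the first type I pick $\vec c(0)$ to be an eigenvector of $U^*(T,0)$ for $e^{\lambda T}$, set $\vec c(t)=e^{\lambda t}U^*(0,t)\vec c(0)$, and then need to solve the scalar equation $c'(t)+\langle f(t),\vec c(t)\rangle=\lambda c(t)$ for a $T$-periodic $c$. The inhomogeneous periodic scalar ODE $c' = \lambda c - g(t)$ with $g(t)=\langle f(t),\vec c(t)\rangle$ has a periodic solution provided $e^{\lambda T}\neq 1$; the explicit formula $c(t)=\tfrac{1}{\lambda-1}\langle\vec c(0),g(t,t-T)\rangle$-type expression from Proposition \ref{evolutionspectrum}(c) can be used, or one simply writes $c(t)=-\int_{-\infty}^{t}e^{\lambda(t-s)}g(s)\,ds$ adjusted to be periodic. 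In the exceptional case $\lambda\in\tfrac{2\pi i}{T}\Z$ and simultaneously $e^{\lambda T}\in\sigma(U(T,0))$ one needs $\int_0^T g(t)e^{-\lambda t}\,dt=0$; if this fails one can instead take $\vec c\equiv 0$ and $c(t)=e^{\lambda t}$, which works since $e^{\lambda T}=1$. For $\lambda\in\tfrac{2\pi i}{T}\Z$ not in the first set, again $\vec c\equiv 0$, $c(t)=e^{\lambda t}$ does the job.

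The main obstacle is the bookkeeping in the converse direction at the overlap of the two sets in \eqref{lambda}, namely when $e^{\lambda T}=1$ and simultaneously $1\in\sigma(U(T,0))$: one must be careful that the scalar equation is solvable (solvability condition $\int_0^T\langle f(t),\vec c(t)\rangle e^{-\lambda t}\,dt=0$ may genuinely fail) and fall back on the solution with $\vec c\equiv 0$. I expect this to be a short case distinction rather than a real difficulty. Everything else is a routine translation between the periodic-ODE picture and the Floquet/monodromy spectrum of $U(T,0)$, already used in Proposition \ref{evolutionspectrum}.
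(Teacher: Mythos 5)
Your proof is correct and takes essentially the same route as the paper's: substitute the affine ansatz, decouple the vector equation $\vec c\,'=(\lambda-A^*(t))\vec c$ from the scalar one, and translate $T$-periodicity into the Floquet condition $e^{\lambda T}\in\sigma(U(T,0))$ (or $\vec c\equiv 0$ and $e^{\lambda T}=1$). The overlap case you single out as the main obstacle is in fact vacuous: by the standing assumption \eqref{exp_decay} every eigenvalue of $U(T,0)$ has modulus less than $1$, so $e^{\lambda T}\neq 1$ whenever $e^{\lambda T}\in\sigma(U(T,0))$, and this is precisely how the paper solves the inhomogeneous scalar equation in the converse direction without any case distinction.
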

\begin{proof}
Since $u$ satisfies $G_\# u=\lambda u$, we have
\begin{align}
c'(t) & =\lambda c(t)-\langle f(t),\vec c(t)\rangle ,\quad t\in\R,
\label{eigen1}
\\
c(0) & = c (T)
\label{eigen2}
\\
\vec c \,'(t) & = (\lambda-A^{*}(t))\vec c(t),\quad t\in\R
\label{eigen3}
\\
\vec c(0) & = \vec c (T)
\label{eigen4}
\end{align}
where $\vec c=(c_1,\dots,c_n)^T$. Note that every solution of
\eqref{eigen3} is of the form
\begin{equation}
    \label{c(t)}
\vec c(t) ={\rm e}^{\lambda t}U^*(0,t)\vec c_0\mbox{ with }\vec
c_0\in\C^n.
\end{equation}
If $\vec c_0=0$ we have $\vec c(t)\equiv 0$ for $t\in\R$, Hence,  
the solutions of \eqref{eigen1} are given by $c(t)={\rm
e}^{\lambda t}c_0$ with any $c_{0}\in \C$, and equation \eqref{eigen2} can be satisfied
iff $\lambda\in\frac{2\pi i}T\Z$.

If  $\vec c_0\neq0$, $\vec c(t)$ satisfies \eqref{eigen4} iff $\vec c_0$ is an eigenvector of $V^{*}(0)$ with 
eigenvalue $e^{-\lambda T}$, i.e. iff
\begin{align}
\lambda\in -\frac1T\log\sigma(U^*(0,T))+\frac{2\pi i}T\Z = 
\frac1T\log\sigma(U(T,0))+\frac{2\pi i}T\Z.
\end{align}
Moreover, since all the solutions of \eqref{eigen1} are given by
\begin{equation}
c(t)={\rm e}^{\lambda t}c_0-\int\limits_0^t{\rm
e}^{\lambda(t-s)}\langle f(s),\vec c(s)\rangle \dd s\mbox{ with
}c_0\in\C ,
\label{soleigen1}
\end{equation}
and ${\rm e}^{\lambda t}\neq 1$ for $\lambda\in
-\frac1T\log\sigma(U^*(0,T))+\frac{2\pi i}T\Z$, we can find
$c_0\in\C$ such that the function given by \eqref{soleigen1}  is a solution to
\eqref{eigen2}.  
\end{proof}

\begin{corollary}
\label{Cor:eigen}
\begin{itemize}
    \item[(i)] $\sigma(G_\#)\cup i\R = \frac{2\pi i}T\Z$; for 
    each $k\in \Z$ the eigenvalue $\frac{2\pi ik}T $ is simple and the 
    eigenspace is spanned by $u(t,x) := e^{2\pi ikt /T}$. 
    \item[(ii)] The strips $\{\lambda\in \C :\;\mbox{\rm 
    Re}\,\lambda\in (\omega_{0}(U),0)\}$ and $\{\lambda\in \C :\;\mbox{\rm 
    Re}\,\lambda\in (a,\omega_{0}(U))\}$ are contained in 
		$\rho(G_\#)$. Here $a = \max\{ 2\omega_{0}(U), \frac{1}{T}\log 
    |\mu|:\; \mu\in \sigma (U(T,0)), \, |\mu|< e^{\omega_{0}(U)T}\}$. 
    \item[(iii)] $\lambda \in \sigma(G_\#)$ and $\RE \,\lambda =
    \omega_{0}(U)$ iff $\mu :=e^{\lambda T} \in \sigma (U(T,0))$ and 
    $|\mu| = \omega_{0}(U)$; for 
    each $k\in \Z$ the eigenvalue $\lambda +\frac{2\pi ik}T $ is 
    semisimple iff $e^{\lambda T}$ is a semisimple eigenvalue of $U(T,0)$.
\end{itemize} 
\end{corollary}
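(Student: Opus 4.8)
The plan is to deduce Corollary \ref{Cor:eigen} from Propositions \ref{polinomi}, \ref{eigen} and \ref{evolutionspectrum} together with the spectral mapping theorem (Proposition \ref{spectral}). All three parts rest on the same idea: by Proposition \ref{polinomi}, any generalized eigenfunction associated to an eigenvalue with $\RE\,\lambda\in(2\omega_0(U),0]$ is an affine function of $x$ with $t$-dependent coefficients, so the full eigenvalue analysis in that range is reduced to the ODE computation carried out in Proposition \ref{eigen}.

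For (i): since $D(G_\#)$ is compactly embedded in $L^2_\#$, the spectrum consists of eigenvalues. An eigenvalue $\lambda$ with $\RE\,\lambda=0$ has, by Proposition \ref{polinomi} (with $K\le 0$), an eigenfunction independent of $x$, say $u(t,x)=c(t)$; then $G_\# u=\lambda u$ forces $c'(t)=\lambda c(t)$ with $c(0)=c(T)$, so $c(t)=e^{\lambda t}c_0$ and periodicity gives $\lambda\in\frac{2\pi i}{T}\Z$; conversely each such $\lambda$ is an eigenvalue with eigenfunction $e^{2\pi i k t/T}$. For simplicity I would invoke Proposition \ref{spectral}: $e^{T\lambda}=1$ is a simple eigenvalue of $\cP^\#_T=P_{0,T}=V(0)$ by Proposition \ref{evolutionspectrum}(a), hence by the spectral mapping correspondence (and the fact that on the imaginary axis the eigenvalues $\frac{2\pi ik}{T}$ are the only preimages of $1$, each contributing one Jordan block — one can check that a generalized eigenfunction of $G_\#$ at $\frac{2\pi ik}{T}$ pushes down to a generalized eigenfunction of $V(0)$ at $1$, which must be constant) each $\frac{2\pi ik}{T}$ is a simple eigenvalue of $G_\#$.

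For (ii): if $\lambda\in\sigma(G_\#)$ with $\RE\,\lambda\in(2\omega_0(U),0)$, then by Proposition \ref{polinomi} the eigenfunction is affine in $x$, so Proposition \ref{eigen} applies and forces $\RE\,\lambda=\frac1T\log|\mu|$ for some $\mu\in\sigma(U(T,0))$ with $|\mu|\le r_0=e^{\omega_0(U)T}$, or $\RE\,\lambda=0$; in either case $\RE\,\lambda\le\omega_0(U)$ and $\RE\,\lambda=0$ is excluded. Hence no eigenvalue has real part in $(\omega_0(U),0)$; and in $(\max\{2\omega_0(U),\frac1T\log r_1\},\omega_0(U))$, where $r_1=\max\{|\mu|:\mu\in\sigma(U(T,0)),|\mu|<r_0\}$, there is likewise no eigenvalue, since Proposition \ref{eigen} only allows $\RE\,\lambda\in\{0\}\cup\frac1T\log|\sigma(U(T,0))|$. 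Setting $a=\max\{2\omega_0(U),\frac1T\log r_1\}$ gives the stated strips inside $\rho(G_\#)$. For (iii): again $\RE\,\lambda=\omega_0(U)$ lies in the range covered by Proposition \ref{polinomi}, so $\lambda\in\sigma(G_\#)$ with this real part iff, by Proposition \ref{eigen}, $e^{\lambda T}\in\sigma(U(T,0))$ with modulus $r_0$; translating by $\frac{2\pi ik}{T}$ does not change $e^{\lambda T}$, and the semisimplicity claim follows from Proposition \ref{evolutionspectrum}(d) (semisimple for $U(T,0)$ iff semisimple for $V(0)$) combined with the spectral mapping correspondence $\sigma(\cP^\#_T)\setminus\{0\}=e^{T\sigma(G_\#)}$, which transports Jordan structure between $G_\#$ at $\lambda+\frac{2\pi ik}{T}$ and $V(0)$ at $e^{\lambda T}$.

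The main obstacle I anticipate is the bookkeeping of Jordan block structure under the spectral mapping / exponential correspondence: one must verify that a generalized eigenfunction of $G_\#$ at $\lambda$ projects to a generalized eigenfunction of $V(0)=\cP^\#_T$ at $e^{\lambda T}$ of the \emph{same} height, and conversely — i.e. that the correspondence is order-preserving on generalized eigenspaces — and that the preimage of a fixed $\mu\neq 0$ under $\lambda\mapsto e^{\lambda T}$ contributes exactly the copies indexed by $\frac{2\pi i}{T}\Z$ and no Jordan coupling between them. This is where the ``change-of-variable'' machinery behind Proposition \ref{spectral} does the real work; once it is in place, parts (i)--(iii) are immediate readings of Proposition \ref{evolutionspectrum}(a),(b),(d) through that dictionary, with Proposition \ref{eigen} supplying the explicit identification of which $\mu\in\sigma(U(T,0))$ occur. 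A cleaner alternative, avoiding Jordan bookkeeping on the semigroup side altogether, is to argue the semisimplicity in (iii) directly on $G_\#$: a non-semisimple $\lambda$ would produce $u$ with $(\lambda I-G_\#)u=w$, $(\lambda I-G_\#)w=0$, both affine in $x$ by Proposition \ref{polinomi}, and one checks via the ODE system \eqref{eigen1}--\eqref{eigen4} that this chain exists iff the eigenvector $\vec c_0$ of $U^*(0,T)$ extends to a Jordan chain, i.e. iff $e^{\lambda T}$ is non-semisimple for $U(T,0)$.
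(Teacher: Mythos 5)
Your overall strategy --- reduce everything to Propositions \ref{polinomi} and \ref{eigen}, which locate all eigenvalues with $\RE\lambda>2\omega_0(U)$ --- is the paper's, and your arguments for the identification $\sigma(G_\#)\cap i\R=\frac{2\pi i}{T}\Z$ and for part (ii) are essentially what the paper does. The gap is in your primary route to the two semisimplicity claims. First, the identification $\cP^\#_T=P_{0,T}=V(0)$ is false: $V(0)$ acts on $L^2(\R^n,\nu_0)$, whereas $\cP^\#_T$ acts on $L^2_\#(\R^{1+n},\nu)$ by $(\cP^\#_T u)(s,x)=P_{s,s+T}u(s+T,\cdot)(x)$; in particular $1$ is \emph{not} a simple eigenvalue of $\cP^\#_T$ (by Proposition \ref{semispecproj} its eigenspace is the infinite-dimensional $L^2_\#(0,T)$), so simplicity of $\frac{2\pi ik}{T}$ cannot be read off Proposition \ref{evolutionspectrum}(a) through the semigroup. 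Second, Proposition \ref{spectral} is only a set equality $\sigma(\cP^\#_\tau)\setminus\{0\}={\rm e}^{\tau\sigma(G_\#)}$; it transports no Jordan-block information, and the ``order-preserving correspondence of generalized eigenspaces'' you invoke is exactly the step that is neither stated nor proved anywhere in the paper. You correctly flag this as the main obstacle, but as written that line of argument does not close.

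The good news is that the ``cleaner alternative'' you sketch at the end is precisely the paper's proof, and it works without any spectral-mapping bookkeeping. For $\lambda=\frac{2\pi ik}{T}$: if $(\lambda I-G_\#)\varphi=c\,e^{2\pi ikt/T}$, then Proposition \ref{polinomi} (with $r=2$) forces $\varphi$ to be independent of $x$, so $G_\#\varphi=\varphi'$ and $\varphi(t)=e^{2\pi ikt/T}(\varphi(0)-ct)$; periodicity gives $c=0$, hence $\KER(\lambda I-G_\#)^2=\KER(\lambda I-G_\#)$. For $\RE\lambda=\omega_0(U)$: a generalized eigenfunction is affine in $x$, and writing the ODE system for its coefficients $c_1(t)$, $\vec c_1(t)$ one finds that the periodicity constraint reduces to $(I-{\rm e}^{\lambda T}U^*(0,T))\vec c_{1,0}=-T\vec c_{2,0}$ with $\vec c_{2,0}$ an eigenvector of ${\rm e}^{\lambda T}U^*(0,T)$ with eigenvalue $1$; this has a nonzero solution iff ${\rm e}^{\lambda T}$ is a non-semisimple eigenvalue of $U(T,0)$. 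Carry out that computation explicitly and your proof is complete; rely on the spectral-mapping transport and it is not.
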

\begin{proof} All the claims are immediate consequences of 
Propositions \ref{polinomi} and \ref{eigen}, except the statements about 
semi-simplicity. 

Let $\lambda = 2\pi ik/T$, and let $\varphi\in $ Ker $(\lambda I - 
G_\#)^{2}$, i.e. 
$(\lambda I - G_\#)\varphi (t,x)= c e^{2\pi ikt /T}$ for some $c\in \R $. 
By proposition \ref{polinomi}, $\varphi = \varphi(t)$ is independent of 
$x$, and $G_\# \varphi (t,x)= \varphi'(t)$, so that $\varphi(t) = e^{2\pi 
ikt /T}(\varphi(0) -ct)$; since  $\varphi$ is $T$-periodic then $c=0$. 
Therefore, the kernel of $(\lambda I - G_\#)^{2}$ is equal to the kernel of $ \lambda I - 
G_\#$. 

Let now $\lambda$ be an eigenvalue with real part equal to 
$\omega_{0}(U)$. By Proposition \ref{polinomi}, all the generalized 
eigenfunctions $v$ are first order polynomials with respect to $x$. 

So, let  $v(t,x)=c_1(t) + \langle \vec c_1(t),x\rangle$
satisfy $(\lambda I-G_\#)v=u$, where $u(t,x)=c_2(t)+\langle \vec
c_2(t),x\rangle$ is an eigenfunction with eigenvalue $\lambda$.
 Note that $\vec c_2\neq0$. As in the proof of Proposition \ref{eigen}, we obtain
\begin{align}
c_1'(t)&=\lambda c_1(t)-<f(t),\vec c_1(t)>-c_2(t),\quad
t\in\R,
\label{eigenb1}
\\
c_1(0)&=c_{1}(T)
\label{eigenb2}
\\
\vec c_1\,'(t)&=(\lambda-A^*(t))\vec c_1-\vec c_2(t),\quad t\in\R
\label{eigenb3}
\\
\vec c_1(0)&=\vec c_{1}(T)
\label{eigenb4}
\end{align}
All the solutions of \eqref{eigenb3} are of the form
\begin{align*}
\vec c_1(t)={\rm e}^{\lambda t}U^*(0,t)\vec c_{1,0}-\int\limits_0^t{\rm
e}^{\lambda(t-s)}U^*(s,t)\vec c_2(s)\dd s
\end{align*}
with some $\vec c_{1,0}\in\C^n$. Since $u$ is an eigenfunction, the 
  proof of Proposition \ref{eigen} yields 
$\vec c_2(s)={\rm e}^{\lambda s}U^*(0,s)\vec c_{2,0}$   where $\vec 
c_{2,0}$ is some
eigenvector of ${\rm e}^{\lambda T}U^*(0,T)$ with  
eigenvalue $1$.
Hence,
\begin{align*}
\vec c_1(t)={\rm e}^{\lambda t}U^*(0,t)\vec c_{1,0}-t{\rm e}^{\lambda
t}U^*(0,t)\vec c_{2,0},\quad t\in\R,
\end{align*}
Therefore, \eqref{eigenb4} is satisfied iff $\vec c_{1,0}={\rm e}^{\lambda 
T}U^*(0,T)\vec c_{1,0}-T\vec c_{2,0}$, that is
\begin{equation}
    \label{eigenb5}
    (1-{\rm e}^{\lambda T}U^*(0,T))\vec c_{1,0}= -T\vec c_{2,0},
\end{equation}
so that  $\vec c_{1,0}$ belongs to the kernel of $(1-{\rm e}^{\lambda 
T}U^*(0,T))^{2}$. If ${\rm e}^{ \lambda T}$ is a semisimple eigenvalue 
of $U(T,0)$, then $1$ is a semisimple eigenvalue of ${\rm e}^{\lambda 
T}U^*(0,T)$, and the only couple $(\vec c_{1,0},\vec c_{2,0})$ that satisfies \eqref{eigenb5} is 
$(0,0)$, so that $v=u\equiv 0$. If ${\rm e}^{ \lambda T}$ is not semisimple, there are 
nonzero couples $(\vec c_{1,0},\vec c_{2,0})$ that satisfy \eqref{eigenb5}. 
Using such couples, 
nonzero solutions $c_{1}(t)$, $\vec c_{1}(t)$ of \eqref{eigenb1}, \ldots, 
\eqref{eigenb4} may be found, and the corresponding functions $v(t) = 
c_{1}(t) +  \langle \vec c_1(t),x\rangle$
satisfy $(\lambda I -G_\#)^{2}v=0$, $(\lambda I -G_\#) v\neq 0$.
\end{proof}

\begin{remark}
\label{solvability}
{\em The spectral projection of $G_\#$ corresponding to the 
eigenvalue $0$ is }
$$u\mapsto \frac{1}{T} \int_{0}^{T}\int_{\R^{n}}u(t,x)\ddc 
\nu_{t}\ddc t.$$
{\em Indeed, it maps $L^2_\#( \R^{1+n},\nu)$ onto  the kernel $X_{c}$ of 
$G_\#$ and it commutes with $G_\#$. This implies that for $h\in L^2_\#( \R^{1+n},\nu)$
the equation}
$$G_\#u = h$$
{\em has a solution $u\in D(G_\#)$ iff the mean value  $\int_{(0,T)\times 
\R^{n}}h(t,x)\ddc \nu$ vanishes, and in this case the solution is 
unique up to constants. }
\end{remark}

\begin{remark}
\label{autonomous}
{\em In the autonomous case $A(t)\equiv A$, $f(t)\equiv 0$, 
$B(t)\equiv B$ we have a complete characterization of the spectrum 
of $G_\#$, }
$$\sigma(G_\#) = \bigg\{\lambda \in \C: \; \lambda = \frac{2k\pi i}T +
\sum_{j=1}^rn_j\lambda_j ; \;k\in \Z, \, \ n_j\in\N  \cup \{0\} \bigg\}$$
{\em where $\lambda_j$, $j=1, \ldots , r$ are the eigenvalues of $A$.}

{\em Indeed, in this case our evolution system of measures consists of a 
unique measure  $\nu $  independent of $t$, which is the invariant measure of  the 
Ornstein-Uhlenbeck semigroup $T(t)$, 
and $G_\#$ may be seen as the closure of the sum of the 
resolvent-commuting operators} 
$$\left\{ \begin{array}{l}
G_{1}: D(G_{1}):= \{ u\in L^2_\#( \R^{1+n},\nu):\; \exists u_{t}\in 
L^2_\#( \R^{1+n},\nu)\}\mapsto L^2_\#( \R^{1+n},\nu)\},  
\\
\\
G_{1}u =u_{t}, \end{array}\right.$$

$$\left\{ \begin{array}{l}
G_{2}: D(G_{2}):= \{ u\in L^2_\#( \R^{1+n},\nu):\; \exists u_{x_{i} }, \,
u_{x_{i}x_{j}}\in 
L^2_\#( \R^{1+n},\nu)\}\mapsto L^2_\#( \R^{1+n},\nu)\},  
\\
\\
(G_{2}u) (t,x)= \cK  u(t, \cdot)(x),  \end{array}\right.$$
{\em hence its spectrum is the sum of the spectra of $G_{1}$ and of $G_{2}$. 
The spectrum of $G_{1}$ is easily seen to be $\frac{2\pi i}T\Z $, 
while the spectrum of $G_{2}$ is equal to the spectrum of the 
Ornstein-Uhlenbeck operator $\cK$ in $L^{2}(\R^{n}, \nu)$, that was 
characterized in \cite{MPP02} as the set of all the complex numbers of the type 
$\sum_{i=1}^rn_i\lambda_i$, where $\lambda_{i}$, $i=1, \ldots, r$ are 
the eigenvalues of $A$ and $n_i\in\N  \cup \{0\}$. }
\end{remark}

\begin{proposition}
\label{semispecproj}
We have
$$\KER (I -\cP_T^{\#})= L^2_\#(0,T) = \KER (I -\cP_T^{\#})^2 ,$$
so that  $1$ is a semisimple isolated eigenvalue of $\cP_T^{\#}$. The spectral 
projection $\Pi$ is given by 
$$\Pi u(t,x) := M_{t}u(t, \cdot), \quad t\in \R, \;x\in \R^{n}.$$
\end{proposition}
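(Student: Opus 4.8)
The plan is to exhibit the candidate $\widetilde\Pi u(t,x):=M_t u(t,\cdot)$ and to prove that it is exactly the Riesz spectral projection of $\cP_T^{\#}$ associated with the point $1$; all the assertions then follow. First I would record the elementary facts. The operator $\widetilde\Pi$ is a bounded projection on $L^2_\#(\R^{1+n},\nu)$: idempotency is immediate and $\|\widetilde\Pi u\|\le\|u\|$ because each $\nu_t$ is a probability measure. Its range is the subspace $L^2_\#(0,T)$ of functions independent of $x$, and on that range $\cP_T^{\#}$ is the identity: if $u(t,x)=c(t)$ then $(\cP_T^{\#}u)(s,x)=P_{s,s+T}\big(c(s+T)\big)=c(s+T)=c(s)=u(s,x)$, since $P_{s,s+T}$ reproduces constants and $c$ is $T$-periodic. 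In particular $L^2_\#(0,T)\subseteq\KER(I-\cP_T^{\#})$ and $\sigma\big(\cP_T^{\#}|_{\mathrm{Ran}\,\widetilde\Pi}\big)=\{1\}$.

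The crucial step is that $\widetilde\Pi$ commutes with $\cP_T^{\#}$. On one hand $(\cP_T^{\#}\widetilde\Pi u)(s,x)=P_{s,s+T}\big(M_{s+T}u(s+T,\cdot)\big)=M_{s+T}u(s+T,\cdot)$. On the other hand $(\widetilde\Pi\cP_T^{\#}u)(s,x)=\int_{\R^n}\big(P_{s,s+T}u(s+T,\cdot)\big)(y)\,\nu_s(\ddc y)$, which by the evolution-system-of-measures identity \eqref{evmeas} (used with $t=s+T$, extended from $C_b(\R^n)$ to $L^2(\R^n,\nu_{s+T})$ by density since both sides are continuous in the argument) equals $\int_{\R^n}u(s+T,y)\,\nu_{s+T}(\ddc y)=M_{s+T}u(s+T,\cdot)$. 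Hence the two expressions agree. Consequently $L^2_\#(\R^{1+n},\nu)=\mathrm{Ran}\,\widetilde\Pi\oplus\KER\widetilde\Pi$ is a topological direct sum of closed $\cP_T^{\#}$-invariant subspaces, so $\sigma(\cP_T^{\#})=\{1\}\cup\sigma\big(\cP_T^{\#}|_{\KER\widetilde\Pi}\big)$.

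Next I would bound the spectral radius of $\cP_T^{\#}$ on $\KER\widetilde\Pi$. If $u\in\KER\widetilde\Pi$, i.e.\ $M_r u(r,\cdot)=0$ for a.e.\ $r$, then $M_{s+nT}u(s+nT,\cdot)=0$ for a.e.\ $s$; since $(\cP_T^{\#})^n=\cP_{nT}^{\#}$, applying estimate \eqref{asintotico} of Proposition \ref{expdich} with $t=s+nT$, $\varphi=u(s+nT,\cdot)$ and integrating over $s\in(0,T)$ gives $\|(\cP_T^{\#})^n u\|_{L^2_\#(\R^{1+n},\nu)}\le Me^{\omega nT}\|u\|_{L^2_\#(\R^{1+n},\nu)}$ for every $\omega\in(\omega_0(U),0)$, with $M=M(\omega)$ independent of $n$. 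Letting $n\to\infty$, the spectral radius of $\cP_T^{\#}|_{\KER\widetilde\Pi}$ is at most $e^{\omega T}$, hence at most $e^{\omega_0(U)T}<1$. In particular $1\notin\sigma\big(\cP_T^{\#}|_{\KER\widetilde\Pi}\big)$, so $1$ is an isolated point of $\sigma(\cP_T^{\#})$ and $I-\cP_T^{\#}$ is invertible on $\KER\widetilde\Pi$.

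Finally I would identify $\widetilde\Pi$ with the Riesz projection $\Pi=\frac{1}{2\pi i}\oint_{|z-1|=\eps}(zI-\cP_T^{\#})^{-1}\,dz$, $\eps$ small: since the resolvent commutes with $\widetilde\Pi$ it respects the splitting, on $\mathrm{Ran}\,\widetilde\Pi$ it reduces to $(z-1)^{-1}I$ and the contour integral gives $I$, while on $\KER\widetilde\Pi$ it is analytic inside the contour and the integral vanishes; thus $\Pi=\widetilde\Pi$, which is the claimed formula. Then $\mathrm{Ran}\,\Pi=L^2_\#(0,T)$, and since $I-\cP_T^{\#}$ is invertible on $\KER\widetilde\Pi$, any $u$ with $(I-\cP_T^{\#})^ku=0$ has vanishing $\KER\widetilde\Pi$-component, i.e.\ $u\in\mathrm{Ran}\,\widetilde\Pi$; combined with $L^2_\#(0,T)\subseteq\KER(I-\cP_T^{\#})\subseteq\KER(I-\cP_T^{\#})^2$ this yields $\KER(I-\cP_T^{\#})=\KER(I-\cP_T^{\#})^2=L^2_\#(0,T)$ and semisimplicity of the eigenvalue $1$. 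The two points requiring real care are the commutation identity, which is exactly where property \eqref{evmeas} and the $s$-dependence of the measures $\nu_s$ enter, and the passage from \eqref{asintotico} to the spectral-radius bound on $\KER\widetilde\Pi$; the remaining steps are routine spectral theory for bounded operators.
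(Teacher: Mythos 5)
Your argument is correct, but it takes a genuinely different route from the paper's. The paper obtains $\KER(I-\cP_T^{\#})=L^2_\#(0,T)$ from the abstract identity $\KER(I-\cP_T^{\#})=\overline{\KER(2\pi i\Z/T-G_\#)}$ (Corollary IV.3.8 of Engel--Nagel) together with the earlier computation of the eigenfunctions of $G_\#$ on $i\R$; it gets the second kernel equality by applying Proposition \ref{evolutionspectrum} pointwise in $t$ to the Poincar\'e operators $P_{t,t+T}$; and it deduces isolation of $1$ from Corollary \ref{Cor:eigen} combined with the spectral mapping theorem (Proposition \ref{spectral}). You instead never touch the generator: you verify directly that $\widetilde\Pi$ is a bounded projection commuting with $\cP_T^{\#}$ (the commutation resting exactly on the invariance identity \eqref{evmeas}, correctly extended to $L^2$ by density), you bound the spectral radius of $\cP_T^{\#}$ on $\KER\widetilde\Pi$ by $e^{\omega_0(U)T}<1$ via Proposition \ref{expdich}(i) applied to $\varphi=u(s+nT,\cdot)$ and integrated in $s$, and you then read off isolation, semisimplicity, and the identification of $\widetilde\Pi$ with the Riesz projection from the invertibility of $I-\cP_T^{\#}$ on $\KER\widetilde\Pi$. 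Your approach is more elementary and self-contained at the level of the semigroup (no spectral mapping theorem, no eigenfunction analysis of $G_\#$), and it yields as a byproduct the quantitative statement that $\sigma(\cP_T^{\#})\setminus\{1\}$ lies in the disk of radius $e^{\omega_0(U)T}$; the paper's route, by contrast, leans on machinery ($G_\#$'s eigenvalue structure, the spectral mapping theorem) that it needs elsewhere anyway, so nothing extra is being built for this proposition alone. Both proofs ultimately depend on results already established at this point of the paper, so yours is a legitimate alternative.
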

\begin{proof} 
 \cite[Corollary IV.3.8]{EN00} yields
\begin{align*}
\KER (I -\cP_T^{\#})=\overline{\KER(2\pi i\Z/T-G_\#)}^{L^2_\#(
(0,T)\times\R^n,\nu)}.
\end{align*}
Since $\KER( 2\pi ik/T-G_\#)$ is spanned by the function $u\mapsto {\rm e}^{2\pi
ik/T} $ for any $k\in\Z$ (see the proof of Proposition~\ref{eigen}),
the first equality follows.

Assume that $u\in\KER(I -\cP_T^{\#})^2$, i.e.
\begin{align*}
\left((I -P_{t,t+T})u(t+T,\cdot)\right)(x)=f(t),\quad\mbox{a.a. }t\in\R
\end{align*}
for some $f\in L^2_\#(0,T)$. By Proposition~\ref{evolutionspectrum},
$u(t)$ is independent of $x$ for a.a. $t\in\R$. Therefore,  $u\in L^2_\#(0,T) = \KER (Id-\cP_T)$. 
This means that  $1$ is a semisimple eigenvalue of $\cP_T^{\#}$. 

By Corollary \ref{Cor:eigen} and Proposition \ref{spectral}, there are no other 
eigenvalues with modulus greater than $e^{\omega_{0}(U)T}$, so that 
$1$ is isolated. 
The projection $u\mapsto \Pi u$ maps 
$L^2_\#((0,T)\times\R^n,\nu)$ onto  $L^2_\#(0,T)$ and it commutes 
with $\cP_T$. Since $1$ is a semisimple eigenvalue, it is the 
spectral projection. 
\end{proof}

\begin{corollary}
    \label{growthbound}
    The growth bound of $(\cP_\tau^{\#}(I-\Pi)_{\tau >0})$ is $\omega_0(U)$. In 
    other words, 
\begin{enumerate}
\item for $\omega>\omega_0(U)$ there exists $M>0$ such that
\begin{equation}
\label{expdecay}
\begin{array}{l}
    \displaystyle {\int\limits_0^T\int\limits_{\R^n}\left(
(\cP_\tau^{\#}(u-\Pi u))(t,x)\right)^2\nu_t(\dd x)\dd t\leq M
{\rm e}^{2\omega\tau}\int\limits_0^T\int\limits_{\R^n}
( (u-\Pi u)(t,x))^2\nu_t(\ddc x)\dd t, }
\\
\\
\quad u\in
L^2_\#(\R^{n+1},\nu),\ \tau\geq0 ;
\end{array}
\end{equation}
\item for $\omega<\omega_0(U)$ there does not exist any $M>0$ such that
\eqref{expdecay} holds.
\label{dec2}
\end{enumerate}
Moreover, estimate \eqref{expdecay} holds for $\omega = \omega_0(U)$ 
iff all the eigenvalues of $U(T,0)$  with modulus equal to 
$r_{0}$ are  semisimple.
\end{corollary}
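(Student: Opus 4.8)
The plan is to transfer the asymptotic behavior of the evolution operator $P_{s,t}$, established in Proposition \ref{expdich}, to the evolution semigroup $(\cP_\tau^{\#})_{\tau\geq0}$ by a direct computation using the representation \eqref{rep}. The key observation is that $(\cP_\tau^{\#}(I-\Pi)u)(t,x) = (P_{t,t+\tau}(I-M_{t+\tau})u(t+\tau,\cdot))(x)$ for $\tau\geq0$, since $\Pi u(t,x) = M_t u(t,\cdot)$ and $M_t$ commutes with $V(t)$, hence with $P_{t,t+\tau}$ after the appropriate bookkeeping; concretely one checks $P_{t,t+\tau}M_{t+\tau}\psi = M_t(P_{t,t+\tau}\psi)$ exactly as in the proof of Proposition \ref{evolutionspectrum}(a), so $\cP_\tau^{\#}\Pi = \Pi\cP_\tau^{\#}$ and $\cP_\tau^{\#}(I-\Pi) = (I-\Pi)\cP_\tau^{\#}$.

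For part (a), fix $\omega>\omega_0(U)$ and apply Proposition \ref{expdich}(i) with this $\omega$: there is $M=M(\omega)$ with $\|P_{s,t}(\varphi-M_t\varphi)\|_{L^2(\R^n,\nu_s)}\leq Me^{\omega(t-s)}\|\varphi\|_{L^2(\R^n,\nu_t)}$. Then, for $u\in L^2_\#(\R^{n+1},\nu)$, integrate the square of $(\cP_\tau^{\#}(u-\Pi u))(t,x) = (P_{t,t+\tau}(u(t+\tau,\cdot)-M_{t+\tau}u(t+\tau,\cdot)))(x)$ over $x$ with respect to $\nu_t$ and over $t\in(0,T)$: the inner integral is bounded by $M^2e^{2\omega\tau}\|u(t+\tau,\cdot)-M_{t+\tau}u(t+\tau,\cdot)\|_{L^2(\R^n,\nu_{t+\tau})}^2$, and integrating in $t$ over a period and using $T$-periodicity of $t\mapsto\nu_t$ and of $u$ gives exactly \eqref{expdecay}. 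Here one uses that $\|u(t+\tau,\cdot)-M_{t+\tau}u(t+\tau,\cdot)\|^2_{L^2(\R^n,\nu_{t+\tau})} = \|(u-\Pi u)(t+\tau,\cdot)\|^2_{L^2(\R^n,\nu_{t+\tau})}$ and that $\int_0^T\|(u-\Pi u)(t+\tau,\cdot)\|^2_{L^2(\R^n,\nu_{t+\tau})}\dd t = \int_0^T\|(u-\Pi u)(t,\cdot)\|^2_{L^2(\R^n,\nu_t)}\dd t$ by the change of variable $t\mapsto t+\tau$ and periodicity.

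For part (b), argue by contradiction using the sharpness statement in Proposition \ref{expdich}(ii): $V(t)$ has an eigenvalue $\lambda$ of modulus $r_0 = e^{\omega_0(U)T}$, and if $\varphi$ is a corresponding eigenfunction then $\varphi\in X_t$ (so $M_t\varphi=0$) and $P_{t-kT,t}\varphi = \lambda^k\varphi$. Build a time-periodic test function $u$ by setting $u(t-kT,\cdot) := P_{t-kT,t}\varphi / \lambda^k$ on a period and extending periodically — more cleanly, one can simply take a generalized eigenfunction of $G_\#$ with real part $\omega_0(U)$ furnished by Corollary \ref{Cor:eigen}(iii), for which $\cP_\tau^{\#}u = e^{\lambda\tau}u$ shows that no decay faster than $e^{\omega_0(U)\tau}$ can hold. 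The last assertion (the $\omega=\omega_0(U)$ case) follows the same pattern: if all eigenvalues of $U(T,0)$ of modulus $r_0$ are semisimple, apply Proposition \ref{expdich}(iii) to $P_{t,t+\tau}$ and integrate as in part (a) to get \eqref{expdecay} with $\omega=\omega_0(U)$; if some such eigenvalue $\lambda$ is not semisimple, Proposition \ref{evolutionspectrum}(d) and the Jordan-block computation $V(t)^k\varphi_0 = \lambda^k\varphi_0 - k\psi_0$ in the proof of Proposition \ref{expdich}(iii) produce a periodic $u$ with $\|\cP_{kT}^{\#}(I-\Pi)u\|$ growing like $k r_0^k$, ruling out \eqref{expdecay} at $\omega=\omega_0(U)$.

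The main obstacle is not analytic depth but the careful handling of the fact that $P_{s,t}$ maps the $s$-dependent space $L^2(\R^n,\nu_{t})$ into $L^2(\R^n,\nu_s)$: one must verify that the substitution $t\mapsto t+\tau$ under the outer $t$-integral, combined with the $T$-periodicity of $\nu_\cdot$ and of $u$, correctly matches the weights $\nu_t$ on both sides of \eqref{expdecay}, and that the fiberwise estimate from Proposition \ref{expdich} integrates to a global one without losing the constant $M$. Everything else is a routine transcription of the fixed-space evolution-semigroup theory (e.g. \cite[\S 2]{CL99}) to the present setting, already carried out at the spectral level in Proposition \ref{spectral}.
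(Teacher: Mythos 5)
Your proposal is correct and follows essentially the same route as the paper: part (a) by transferring Proposition \ref{expdich}(i) fiberwise through the definition of $\cP_\tau^{\#}$ and integrating over a period, part (b) via eigenfunctions of $G_\#$ with real part $\omega_0(U)$ coming from Proposition \ref{eigen}/Corollary \ref{Cor:eigen}, and the $\omega=\omega_0(U)$ case via semisimplicity and the Jordan-block computation $\cP_\tau^{\#}v = e^{\lambda\tau}v - \tau e^{\lambda\tau}u$. You simply spell out the change-of-variable and measure-matching details that the paper leaves implicit.
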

\begin{proof}
Since  $\Pi u (t,x) = M_{t}u(t,\cdot)$, $t\in\R$, then the first 
assertion immediately follows from
Proposition~\ref{expdich}(i) and from the definition of $\cP_{\tau }^{\#}$. 

By Proposition~\ref{eigen},   $\log 
\sigma(U(T,0))/T \subset \sigma_{p}(G_\#)$, so that 
for any  $\mu \in \sigma(U(T,0))$ with modulus 
equal to ${\rm e}^{ \omega_0(U)T}$, there is a nonzero eigenfunction 
$u$ of $G$ with eigenvalue $\lambda = \log \mu /T$, 
such that $\|\cP_{\tau }^{\#}u \|_{L^2_\#((0,T)\times\R^n,\nu)}$ 
$=$ ${\rm e}^{ \omega_0(U)\tau} \|u \|_{L^2_\#((0,T)\times\R^n,\nu)}$  for 
each $\tau >0$. Hence, statement (b) holds.

If all the eigenvalues of $U(T,0)$  with modulus equal to 
$r_{0}$ are  semisimple, then estimate \eqref{asintotico} holds 
with $\omega = \omega_{0}(U)$ and consequently 
\eqref{expdecay} holds with $\omega = \omega_{0}(U)$.
If some of such eigenvalues $\mu$ is not 
semisimple, the  eigenvalue $\lambda = \log \mu/T $ of $ G_\#$ is not 
semisimple by Corollary~\ref{Cor:eigen}, and for every $v\in $ Ker $(\lambda I - 
G_\#)^{2} $  such that  $ \lambda v - 
G_\# v = u \in $ Ker $G_\#\setminus \{ 0\}$ we have 
$\cP_{\tau }^{\#}v$ $=$ ${\rm e}^{\lambda  \tau}v$ $- \tau {\rm e}^{\lambda 
\tau}u$ for each $\tau >0$, so that for  
$\omega = \omega_0(U)$ there does not exist any $M>0$ such that
\eqref{expdecay} holds.
\end{proof}

Formula \eqref{expdecay} improves the convergence result of 
\cite[Prop.~6.4]{DPL06}, obtained by different methods.


\subsection{Spectral gap of $G$ and  asymptotic behavior of 
$(\cP_\tau )_{\tau\geq0}$.}

In this section the functions $A$, $B$, $f$ are not necessarily 
periodic but just bounded. 
Although our results are not as precise as in the periodic case, 
still the Poincar\'e type inequality of the next theorem yields
information on the asymptotic behavior of $(\cP_\tau )_{\tau\geq 0}$.

We use the notation of \S 2.3, setting again for each $u\in 
L^{2}(\R^{1+n}, \nu)$
$$(\Pi u)(t,x) = M_{t}u(t, \cdot) = \int_{\R^{n}}u(t,x) \dd \nu_{t}, 
\quad t\in \R, \;x\in \R^{n}.$$
$\Pi$ is still an orthogonal projection, that maps $L^{2}(\R^{1+n}, \nu)$ into its 
subspace of the functions independent of $x$, isomorphic to 
$L^{2}(\R, \ddc t)$.

\begin{theorem}
    \label{Th:Poincare}
    For each $\omega \in (\omega_{0}(U), 0)$ let $M=M(\omega)$ be given by
     \eqref{exp_decay}. Set moreover  $C:= \sup_{  t\in \R}\|B (t)\|$. 
     Then for each $u\in D(G)$ we have
\begin{equation}
\label{Poincare}
\int_{\R^{1+n}}
(u(t,x)-\Pi u(t))^2 \dd \nu \le \frac{M^{2}C^{2}}{2\omega}
\int_{\R^{1+n}} |D_xu(t,x)|^2\dd \nu .
\end{equation}
\end{theorem}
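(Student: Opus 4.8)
The plan is to derive \eqref{Poincare} from the exponential decay estimate \eqref{stime} (with $|\alpha|=0$ and $|\alpha|=1$) together with the invariance of $\nu$ under $(\cP_\tau)_{\tau\ge0}$, by integrating the decay of $\|\cP_\tau(u-\Pi u)\|^2$ in time. The key identity I would use is that for fixed $u\in D(G)$ the function $\tau\mapsto \cP_\tau u$ is differentiable with $\frac{d}{d\tau}\cP_\tau u = \cP_\tau Gu = G\cP_\tau u$, and that $\Pi$ commutes with $\cP_\tau$ (since $\cP_\tau$ maps functions independent of $x$ to functions independent of $x$, being the translation semigroup there). Hence $v_\tau := \cP_\tau(u-\Pi u) = \cP_\tau u - \Pi u$ solves $\frac{d}{d\tau}v_\tau = G v_\tau$ and one computes, using that $\nu$ is $(\cP_\tau)$-invariant so that $\frac{d}{d\tau}\|v_\tau\|^2_{L^2(\R^{1+n},\nu)}$ can be expressed via the carré du champ of the Ornstein–Uhlenbeck operators,
\begin{equation*}
\frac{d}{d\tau}\int_{\R^{1+n}}(\cP_\tau u - \Pi u)^2\dd\nu = -\int_{\R^{1+n}}\langle B(t)B^*(t)D_x v_\tau, D_x v_\tau\rangle\,\dd\nu \ge -C^2\int_{\R^{1+n}}|D_x \cP_\tau u|^2\dd\nu,
\end{equation*}
where $C=\sup_t\|B(t)\|$ and we used $D_x\Pi u=0$. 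The first step, then, is to justify this energy identity; it should follow from the already-cited invariance of $\nu$ and a routine integration by parts for the Ornstein–Uhlenbeck generator in $L^2(\R^{1+n},\nu)$, exactly as in the stationary (autonomous) theory, the only subtlety being that the measure $\nu_t$ varies with $t$ but the $u_s$ contribution integrates to a total-derivative/invariance term.

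Second, I would integrate this differential inequality from $0$ to $\infty$. By Corollary \ref{growthbound}-type reasoning in the non-periodic setting (or directly by the growth bound $\le c_0<0$ of $(\cP_\tau(I-\Pi))$ announced in the introduction, which in turn rests on \eqref{stime}), $\|\cP_\tau u - \Pi u\|_{L^2(\R^{1+n},\nu)}\to 0$ as $\tau\to\infty$. Therefore
\begin{equation*}
\int_{\R^{1+n}}(u-\Pi u)^2\dd\nu = -\int_0^\infty \frac{d}{d\tau}\int_{\R^{1+n}}(\cP_\tau u-\Pi u)^2\dd\nu\,\dd\tau \le C^2\int_0^\infty\int_{\R^{1+n}}|D_x\cP_\tau u|^2\dd\nu\,\dd\tau.
\end{equation*}
Now I bound $\int_{\R^{1+n}}|D_x\cP_\tau u|^2\dd\nu$ using the smoothing estimate \eqref{stime} with $|\alpha|=1$: written in the $\cP_\tau$ language, $\|D_x\cP_\tau u\|_{L^2(\R^{1+n},\nu)}\le M e^{\omega\tau}\|D_x u\|_{L^2(\R^{1+n},\nu)}$ for $\tau\ge1$, and for $0<\tau<1$ the factor $(t-s)^{-1/2}$ is not integrable-squared, so one must instead use $D_x\cP_\tau u = \cP_\tau D_x u$ composed with the contractivity; more simply, for $0<\tau<1$ one uses $\|D_x\cP_\tau u\|\le M e^{\omega\tau}\|D_x u\|$ as well, since $\cP_\tau$ acting on an already-differentiated function only needs the $|\alpha|=0$ part of \eqref{stime} plus commutation $D_x P_{s,s+\tau}\varphi = P_{s,s+\tau}D_x\varphi\cdot(\text{Jacobian of }U)$ — the clean route is $\|D_x\cP_\tau u\|_{L^2(\R^{1+n},\nu)}\le Me^{\omega\tau}\|D_x u\|_{L^2(\R^{1+n},\nu)}$ for all $\tau\ge0$. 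Granting that, $\int_0^\infty M^2 e^{2\omega\tau}\dd\tau = M^2/(2|\omega|) = -M^2/(2\omega)$, giving exactly
\begin{equation*}
\int_{\R^{1+n}}(u-\Pi u)^2\dd\nu \le \frac{M^2 C^2}{2|\omega|}\int_{\R^{1+n}}|D_x u|^2\dd\nu = -\frac{M^2C^2}{2\omega}\int_{\R^{1+n}}|D_x u|^2\dd\nu,
\end{equation*}
which is \eqref{Poincare} (noting $\omega<0$ so $-1/(2\omega)>0$).

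The main obstacle I expect is making the energy identity rigorous in the time-varying-measure setting: one must check that $\tau\mapsto\|\cP_\tau u - \Pi u\|^2$ is absolutely continuous with the claimed derivative, that the integration by parts producing the carré du champ term is valid for $u\in D(G)=H^{1,2}(\R^{1+n},\nu)$, and crucially that the extra term coming from $u_s$ (present because $\nu_s$ depends on $s$, unlike the autonomous case) is handled correctly — this is precisely where the invariance of $\nu$ under $(\cP_\tau)$ is indispensable, as it guarantees $\frac{d}{d\tau}\int\!(\cP_\tau w)^2\dd\nu = \int \frac{d}{d\tau}(\cP_\tau w)^2\dd\nu$ and that the "drift part" of $G$ contributes nothing. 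A secondary technical point is the $0<\tau<1$ behavior of $\|D_x\cP_\tau u\|$; the safe approach is to split $\int_0^\infty = \int_0^1 + \int_1^\infty$, use the contraction-after-differentiation bound on $[0,1]$ and \eqref{stime} on $[1,\infty)$, and absorb constants — though to land exactly on the constant $M^2C^2/(2\omega)$ one should argue directly with $\|D_x\cP_\tau u\|_{L^2(\R^{1+n},\nu)} \le Me^{\omega\tau}\|D_x u\|_{L^2(\R^{1+n},\nu)}$ valid for all $\tau\ge 0$, which follows from \eqref{stime} applied to the function $D_x u$ together with the chain rule $D_x P_{s,t}\varphi(x) = U^*(t,s)(P_{s,t}D_x\varphi)(x)$ and $\|U(t,s)\|\le Me^{\omega(t-s)}$.
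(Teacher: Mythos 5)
Your strategy is essentially the one the paper intends: Theorem \ref{Th:Poincare} is proved exactly as \cite[Thm.~6.3]{DPL06}, i.e.\ via the energy identity $\frac{d}{d\tau}\|\cP_\tau w\|^2_{L^2(\R^{1+n},\nu)}=-\int_{\R^{1+n}}|B^*(t)D_x\cP_\tau w|^2\dd\nu$ (a consequence of the invariance of $\nu$ and \eqref{quadraticform}, the two $\partial_t\rho$ contributions cancelling), integration in $\tau$, and the commutation $D_xP_{s,t}\varphi=U^*(t,s)P_{s,t}D_x\varphi$ combined with contractivity to get $\|D_x\cP_\tau u\|_{L^2(\nu)}\le M{\rm e}^{\omega\tau}\|D_xu\|_{L^2(\nu)}$ for all $\tau\ge0$. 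Your observation that \eqref{stime} with $|\alpha|=1$ cannot be used near $\tau=0$ and must be replaced by this pointwise commutation is correct, and the resulting constant $M^2C^2/(2|\omega|)$ is the intended one (the sign in \eqref{Poincare} as printed is a typo, since $\omega<0$).

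There is one genuine gap: your justification of $\|\cP_\tau(u-\Pi u)\|_{L^2(\R^{1+n},\nu)}\to0$ is circular. In the non-periodic setting this decay with rate $c_0$ is precisely Corollary \ref{Cor:growthbound}, which the paper deduces \emph{from} Theorem \ref{Th:Poincare}; it does not ``rest on \eqref{stime}'' independently, and the periodic Corollary \ref{growthbound} relies on the compactness of the Poincar\'e operators, unavailable here. The repair is to establish the convergence directly on a core: for $u(t,x)=\xi(t)\varphi(x)$ with $\xi\in C_c^\infty(\R)$ and $\varphi\in C^1_b(\R^n)$, the invariance \eqref{evmeas} gives $|P_{s,s+\tau}\varphi(x)-M_{s+\tau}\varphi|\le\|D_xP_{s,s+\tau}\varphi\|_\infty\int_{\R^n}|x-y|\,\nu_s(\ddc y)\le M{\rm e}^{\omega\tau}\|D\varphi\|_\infty(|x|+c)$ with $c$ controlled by the uniform moment bounds \eqref{moments}, whence the decay; one then proves \eqref{Poincare} on the core and extends by density. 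A smaller slip: $\cP_\tau\Pi u\neq\Pi u$ (on the range of $\Pi$ the semigroup is the translation semigroup, not the identity), so you should carry $w_\tau=\cP_\tau(u-\Pi u)$ through the whole computation; this costs nothing, since $D_x\cP_\tau\Pi u=0$ and $\|w_0\|_{L^2(\nu)}=\|u-\Pi u\|_{L^2(\nu)}$, but as written the identity $\cP_\tau(u-\Pi u)=\cP_\tau u-\Pi u$ is false.
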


A similar inequality was proved in 
\cite[Thm. 6.3]{DPL06} in the periodic case for functions in $D(G_\#)$, 
but the proof is the same for functions in $D(G)$; one has just to 
replace the core used in \cite{DPL06} by $D(G_{0})$ and the integrals over $(0,T)\times 
\R^{n}$ by integrals over $ \R^{1+n}$. So, we omit the proof. 

Once estimate \eqref{Poincare} is available, a convergence result 
follows in a more or less standard way.

\begin{corollary}
    \label{Cor:growthbound}
Let $\omega$,  $M $,  $C$ be as in Theorem \ref{Th:Poincare}, and let 
$\mu_{0}$ be the constant in \eqref{invertb}. 
For each $u\in L^{2}(\R^{1+n}, \nu)$ we have
\begin{equation}
\label{e7.4}
\|\mathcal{P}_\tau (u- \Pi u )\|_{L^{2}(\R^{1+n}, \nu)}
\leq e^{\omega  \mu_{0}^{2} \tau /M^{2}C^{2} }
\| u - \Pi u \|_{L^{2}(\R^{1+n}, \nu)}, \quad \tau 
>0.
\end{equation}
\end{corollary}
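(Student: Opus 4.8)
The plan is to combine the Poincaré inequality \eqref{Poincare} with the dissipativity of the generator $G$ on the complementary subspace $(I-\Pi)(L^2(\R^{1+n},\nu))$, using the standard Gronwall/differential-inequality argument for the decay of a semigroup. First I would reduce to $u\in D(G)$ with $\Pi u=0$: by density of $D(G)$ in $L^2(\R^{1+n},\nu)$ and continuity of $\cP_\tau$, and since $\Pi$ commutes with $\cP_\tau$ (as $\Pi$ is the spectral projection onto $\sigma(G)\cap i\R$, equivalently because $P_{s,t}M_t=M_sP_{s,t}$), it suffices to prove $\|\cP_\tau v\|_{L^2(\R^{1+n},\nu)}\le e^{\omega\mu_0^2\tau/(M^2C^2)}\|v\|_{L^2(\R^{1+n},\nu)}$ for $v=u-\Pi u\in D(G)$, and then pass to general $u$ by approximation. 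Note also that $\cP_\tau v\in D(G)$ for $v\in D(G)$ and $\Pi\cP_\tau v=0$, so the subspace is invariant.

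Next I would set $\beta(\tau):=\|\cP_\tau v\|_{L^2(\R^{1+n},\nu)}^2$ for $v\in D(G)$ with $\Pi v=0$. Since $\nu$ is an invariant measure for $(\cP_\tau)_{\tau\ge0}$ and $\cP_\tau v\in D(G)$, one computes
\begin{align*}
\beta'(\tau)=2\int_{\R^{1+n}} (\cP_\tau v)\, G(\cP_\tau v)\dd\nu.
\end{align*}
The key identity here is the integration-by-parts / carré-du-champ formula for $G$ with respect to the invariant measure $\nu$: for $w\in D(G)$,
\begin{align*}
\int_{\R^{1+n}} w\, Gw \dd\nu = -\frac12\int_{\R^{1+n}} |B^*(t)D_xw(t,x)|^2 \dd\nu \le -\frac{\mu_0^2}{2}\int_{\R^{1+n}} |D_xw(t,x)|^2 \dd\nu,
\end{align*}
where the inequality uses the uniform ellipticity \eqref{invertb}. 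This formula is essentially the one underlying the Poincaré inequality of \cite[Thm. 6.3]{DPL06}, so I would cite it rather than reprove it. Applying it with $w=\cP_\tau v$ gives $\beta'(\tau)\le -\mu_0^2\int_{\R^{1+n}} |D_x\cP_\tau v|^2\dd\nu$.

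Now I apply Theorem \ref{Th:Poincare} to $w=\cP_\tau v$. Since $\Pi w=0$, the left side of \eqref{Poincare} is exactly $\beta(\tau)$, so $\beta(\tau)\le \frac{M^2C^2}{2\omega}\int_{\R^{1+n}}|D_x\cP_\tau v|^2\dd\nu$, i.e. (recall $\omega<0$, so this is a genuine upper bound for $\int|D_x\cP_\tau v|^2$ in terms of $\beta$)
\begin{align*}
\int_{\R^{1+n}} |D_x\cP_\tau v|^2 \dd\nu \le \frac{2\omega}{M^2C^2}\,\beta(\tau).
\end{align*}
Combining, $\beta'(\tau)\le \frac{2\omega\mu_0^2}{M^2C^2}\beta(\tau)$, and Gronwall gives $\beta(\tau)\le e^{2\omega\mu_0^2\tau/(M^2C^2)}\beta(0)$, which is \eqref{e7.4} after taking square roots. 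I expect the main obstacle to be a rigorous justification of the differentiation of $\beta$ and of the integration-by-parts identity: one must know that $\cP_\tau v\in D(G)$ with $\tau\mapsto\cP_\tau v$ differentiable into $L^2$, and that the carré-du-champ formula holds for all $w\in D(G)=H^{1,2}(\R^{1+n},\nu)$ (not merely on a core). Both points are available from \cite{DPL06,GL07} — the former is just the standard fact that $\cP_\tau$ maps $D(G)$ into $D(G)$ with $\frac{d}{d\tau}\cP_\tau v=G\cP_\tau v$, and the latter follows by density of a core in $D(G)$ together with the continuity of both sides of the identity in the graph norm — so the argument is, as the authors say, more or less standard once \eqref{Poincare} is in hand.
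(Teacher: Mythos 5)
Your argument is correct and is essentially the proof the paper has in mind: the paper omits the proof of Corollary \ref{Cor:growthbound}, referring to \cite[Prop. 6.4]{DPL06}, and that reference is precisely the Gronwall/carr\'e-du-champ argument you give (differentiate $\beta(\tau)=\|\cP_\tau v\|^2$ for $v\in D(G)$ with $\Pi v=0$, use $\int wGw\,\ddc\nu=-\tfrac12\int|B^*(t)D_xw|^2\ddc\nu$ together with \eqref{invertb}, and close the loop with the Poincar\'e inequality \eqref{Poincare}). The only caveat is notational: since $\omega<0$, the constant $M^{2}C^{2}/2\omega$ in \eqref{Poincare} must be read as $M^{2}C^{2}/(2|\omega|)$ for that inequality to be meaningful, after which your chain of inequalities yields exactly the exponent in \eqref{e7.4}.
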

Again, the proof is the same of \cite[Prop. 6.4]{DPL06}, and it is 
omitted. 

Corollary \ref{Cor:growthbound} shows that the growth bound of 
$\mathcal{P}_\tau (I-\Pi)$ does not exceed the number $c_{0}$ defined by
\begin{equation}
\label{c0}
c_{0}= \inf \bigg\{ \frac{\omega \mu_{0}^{2}}{M(\omega)^{2}C^{2}}\,: \;\omega \in 
(\omega_{0}(U), 0)  \bigg\}.
\end{equation}
But $c_{0}$  does not seem to be optimal. By estimates \eqref{stime}
the asymptotic behavior of the space derivatives of $\mathcal{P}_\tau u$ is 
the same of the periodic case, and this suggests that the growth 
bound of $\mathcal{P}_\tau (I-\Pi)$ should be equal to $\omega_{0}(U)$.

Now we can prove some spectral properties of $G$. 

\begin{proposition}
The following statements hold true.
\begin{itemize}
    
 \item[(i)] 	The spectrum of $G$ is invariant under translations along $i\R$.
 
 \item[(ii)] $i\R\subset \sigma(G)$, and $\lambda I-G$ is one to one 
 for each $\lambda \in i\R$. The associated spectral projection is 
 $\Pi$. 
 
 \item[(iii)] $\sigma(G) \cap \{\lambda \in \C:\; \RE\lambda \in 
 ( c_{0},0)\} = \varnothing $.  

 \item[(iv)] If the data $A $, $B$, $f$ are $T$-periodic,
				 then 
 $\frac1T\log\sigma(U(T,0))+ i\R \subset \sigma(G)$.
\end{itemize}
\end{proposition}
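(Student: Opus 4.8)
The plan is to obtain all four statements from the fact that the orthogonal projection $\Pi$ reduces both $(\cP_\tau)_{\tau\ge0}$ and $G$, together with the description of the two pieces into which the problem then splits. First I would note that $\Pi$ commutes with every $\cP_\tau$: indeed $\cP_\tau\Pi u(s,\cdot)=P_{s,s+\tau}\big(M_{s+\tau}u(s+\tau,\cdot)\big)=M_{s+\tau}u(s+\tau,\cdot)$ since $P_{s,s+\tau}$ fixes constants, while $\Pi\cP_\tau u(s,\cdot)=\int_{\R^n}\big(P_{s,s+\tau}u(s+\tau,\cdot)\big)\,\ddc\nu_s=\int_{\R^n}u(s+\tau,\cdot)\,\ddc\nu_{s+\tau}=M_{s+\tau}u(s+\tau,\cdot)$ by \eqref{evmeas}. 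Hence $\Pi$ commutes with $R(\lambda,G)$ and with $G$, the decomposition $L^2(\R^{1+n},\nu)=\Pi(L^2(\R^{1+n},\nu))\oplus(I-\Pi)(L^2(\R^{1+n},\nu))$ reduces $G$, and $\sigma(G)$ is the union of the spectra of the two parts. On the range of $\Pi$, identified with $L^2(\R;\ddc t)$, the semigroup is the left translation semigroup (again because $P_{s,s+\tau}$ fixes constants), so that part of $G$ is $\ddc/\ddc t$ on $H^1(\R;\ddc t)$, whose spectrum is $i\R$ and for which $\lambda I-G$ is one to one but never onto for $\lambda\in i\R$. On the range of $I-\Pi$, Corollary~\ref{Cor:growthbound} and the definition \eqref{c0} of $c_0$ give that $\cP_\tau(I-\Pi)$ has growth bound $\le c_0<0$, hence the spectral bound of that part of $G$ is $\le c_0$, i.e. its spectrum lies in $\{\RE\lambda\le c_0\}$.

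With these facts, (i)–(iii) follow quickly. For (i) I would use the unitary multiplication operators $(\mathcal M_\beta u)(s,x):=e^{i\beta s}u(s,x)$, $\beta\in\R$: they preserve $D(G)=H^{1,2}(\R^{1+n},\nu)$ and satisfy $\mathcal M_\beta^{-1}G\mathcal M_\beta=G+i\beta I$, so $\sigma(G)=\sigma(G)+i\beta$ for every $\beta$. For (ii) and (iii), combining the two pieces gives $\sigma(G)\subseteq i\R\cup\{\RE\lambda\le c_0\}$; this yields the empty strip of (iii), gives $i\R\subseteq\sigma(G)$, and shows that $i\R$ is separated from the rest of $\sigma(G)$ by the strip $\{c_0<\RE\lambda<0\}$, so $\Pi$ is the spectral projection relative to $\sigma(G)\cap i\R=i\R$. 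Injectivity of $\lambda I-G$ on $i\R$: if $(\lambda I-G)u=0$ then $\cP_\tau u=e^{\lambda\tau}u$, so applying $I-\Pi$ and using that $\cP_\tau(I-\Pi)$ decays exponentially while $|e^{\lambda\tau}|=1$ forces $(I-\Pi)u=0$; then $\Pi u$ is a solution of $\phi'=\lambda\phi$ in $L^2(\R)$, i.e. a multiple of $e^{\lambda t}\notin L^2(\R)$, so $u=0$.

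Statement (iv) is the one requiring real work. The idea is to produce approximate eigenfunctions of $G$ out of the $T$-periodic eigenfunctions of $G_\#$ supplied by Proposition~\ref{eigen}. By (i) it suffices to show $\lambda_0:=\frac1T\log\mu\in\sigma(G)$ for each $\mu\in\sigma(U(T,0))$; if $|\mu|=1$ this point lies on $i\R\subseteq\sigma(G)$ by (ii), so assume $|\mu|\ne1$, whence $\lambda_0\notin\frac{2\pi i}{T}\Z$. By the converse part of Proposition~\ref{eigen} there is a nonzero $T$-periodic $u_\#(t,x)=c(t)+\langle\vec c(t),x\rangle$ with $c\in H^1_\#(0,T)$ and $\vec c(t)=e^{\lambda_0 t}U^*(0,t)\vec c_0$, $\vec c_0\ne0$, satisfying $\partial_t u_\#+\cK(\cdot)u_\#=\lambda_0 u_\#$ a.e. I would take smooth cut-offs $\chi_N:\R\to[0,1]$ with $\chi_N\equiv1$ on $[-N,N]$, $\supp\chi_N\subset[-N-1,N+1]$, $\|\chi_N'\|_\infty\le C$, and set $u_N:=\chi_N u_\#$. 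Since $\cK(\cdot)$ differentiates only in $x$ and $D_x^2u_N=0$, a short computation (using the uniformly bounded moments of the $\nu_t$) shows $u_N\in D(G)$ and $(\lambda_0 I-G)u_N=-\chi_N'\,u_\#$. Writing $\psi(t):=\|u_\#(t,\cdot)\|_{L^2(\nu_t)}^2$ — continuous, $T$-periodic, and strictly positive because $\vec c(t)\ne0$ makes $u_\#(t,\cdot)$ a nonconstant affine function and $\nu_t$ is nondegenerate — one gets $\|u_N\|_{L^2(\R^{1+n},\nu)}^2=\int_{\R}\chi_N(t)^2\psi(t)\,\ddc t\ge 2N\min_{[0,T]}\psi$ and $\|(\lambda_0 I-G)u_N\|_{L^2(\R^{1+n},\nu)}^2=\int_{\R}\chi_N'(t)^2\psi(t)\,\ddc t\le 2C^2\max_{[0,T]}\psi$, so $\|(\lambda_0 I-G)u_N\|/\|u_N\|\to0$ and $\lambda_0\in\sigma(G)$. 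Applying (i) and letting $\mu$ run over $\sigma(U(T,0))$ gives $\frac1T\log\sigma(U(T,0))+i\R\subseteq\sigma(G)$.

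The main obstacle is this construction for (iv): the eigenfunctions of $G_\#$ are not in $L^2(\R^{1+n},\nu)$, so one must localize in time and control the defect $\chi_N'\,u_\#$; the key point is that this defect has $L^2$-norm bounded independently of $N$ while the mass of $u_N$ grows linearly in $N$, which places $\lambda_0$ in the approximate point spectrum of $G$. The remaining checks — that $u_N\in H^{1,2}(\R^{1+n},\nu)=D(G)$ and that $t\mapsto\|u_\#(t,\cdot)\|_{L^2(\nu_t)}$ is bounded above and away from $0$ — reduce to the uniform ellipticity \eqref{invertb} and the uniformly bounded moments \eqref{moments} of the Gaussian measures $\nu_t$.
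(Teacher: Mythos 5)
Your proposal is correct and follows essentially the same route as the paper: statement (i) via conjugation with the unitary multiplications $e^{i\beta s}$, statements (ii)--(iii) via the reduction of $G$ by $\Pi$ together with Corollary~\ref{Cor:growthbound} and the identification of the part of $G$ on the range of $\Pi$ with $\ddc/\ddc t$ on $H^{1}(\R;\ddc t)$, and statement (iv) via a singular Weyl sequence obtained by cutting off in time a periodic eigenfunction of $G_\#$ supplied by Proposition~\ref{eigen} (your $\chi_N$ play exactly the role of the paper's $\theta_k$, with the defect $\chi_N' u_\#$ of bounded norm against mass growing like $N$). The extra verifications you supply — that $\Pi$ commutes with $\cP_\tau$, the injectivity of $\lambda I-G$ on $i\R$, and the positivity of $t\mapsto\|u_\#(t,\cdot)\|_{L^2(\nu_t)}$ — are details the paper leaves implicit, and they are carried out correctly.
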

\begin{proof}
    For every $\xi\in \R$ let us consider the 
    unitary operator $T_{\xi}$ in $L^2( \R^{1+n},\nu)$ defined by 
    $T_{\xi}u(t,x) = e^{it\xi }u(t,x)$. Since the spectrum of $G$ is equal to the 
    spectrum of $(T_{\xi})^{-1}GT_{\xi} = G +i\xi I$,  
     statement (i) follows.

\vspace{1mm}
Let us split  $L^2( \R^{1+n},\nu)$ in the direct sum 
$$L^2( \R^{1+n},\nu) = (I-\Pi)(L^2( \R^{1+n},\nu)) \oplus \Pi (L^2( 
\R^{1+n},\nu)).$$
The semigroup  $\mathcal{P}_\tau$ maps 
$(I-\Pi)(L^2( \R^{1+n},\nu))$ into itself (the proof is the same of 
the periodic case), and the
growth bound of $\mathcal{P}_\tau(I-\Pi)$ is less or equal to 
$c_{0}$, by corollary \ref{Cor:growthbound}. It follows that the 
spectrum of the part of $G$ in $(I-\Pi)(L^2( \R^{1+n},\nu))$ is 
contained in the halfplane $ \{\lambda \in \C:$ Re$\,\lambda \leq c_{0}\}$. 

The part of $G$ in $\Pi (L^2( \R^{1+n},\nu))$ is just the time derivative, with domain isomorphic to $H^{1}(\R, 
dt)$. Its spectrum is $i\R $, and it has no eigenvalues. 
Statements (ii) and (iii) follow. 

\vspace{1mm}

In the periodic case, let $\mu\in \sigma(U(T,0))$. By Proposition \ref{eigen}, $\lambda := 
\log \mu /T $ is an eigenvalue of $G_\#$. Let $u$ be an 
eigenfunction. 
Fix a function  $\theta\in C^{\infty}(\R)$  such that $\theta(t) \equiv 1$ in 
$(-\infty, 0]$, $\theta \equiv 0$ in $[T, +\infty)$, 
and define  
$\theta _k(t ) = \theta(t-kT)$ for $t\geq 0$, $\theta _{k}(t ) = \theta(-t-kT)$ for 
$t\leq 0$. 

Then the functions  $u_{k}(t,x) := u(t,x)\theta_{k}(t)$ belong to 
$D(G)$ and satisfy $(\lambda I-G)  u_{k} (t,x) $ $=$ $ \theta 
_{k}'(t)u(t,x)$, so that  $\|(\lambda I-G)  u_{k} \|_{L^2( 
\R^{1+n},\nu)}$  is bounded by a constant independent of $k$, while 
$\|u_{k}\|_{L^2( 
\R^{1+n},\nu)^{2}} \geq \int_{-kT}^{kT}\int_{\R^{n}} |u(t,x)|^{2}\dd \nu
= 2k \|u\|_{L^2_\#((0,T)\times\R^n,\nu)}$ goes to $\infty$ as $k\to 
\infty$. This shows that $\lambda I -G$ cannot have a bounded inverse, 
so that  $\lambda \in \sigma (G)$.  
\end{proof}

\begin{remark}
\label{solvability2}
{\em For $h\in L^2( \R^{1+n},\nu)$ consider the equation}
$$Gu =h.$$
{\em It is equivalent to the system}
$$\left\{ \begin{array}{ll}
(i)& G (I-\Pi) u = (I-\Pi)h,
\\
\\
(ii)& G \Pi u = \Pi h.
\end{array}\right.
$$
{\em Equation (i) is uniquely solvable with respect to $(I-\Pi) u$, because $0$ is in the resolvent set of the 
part of $G$ in  $(I-\Pi)(L^2( \R^{1+n},\nu))$. Equation (ii) is 
equivalent to}
$$\frac{\ddc}{\ddc t}\Pi u = \Pi h,$$
{\em and it is solvable iff $\Pi h$ has a primitive $\xi$ in $L^{2}(\R, 
\ddc t)$, in this case the solution is unique. }

{\em So, the range of $G$ consists of the functions $h$ such that 
$\Pi h$ has a primitive $\xi$ in $L^{2}(\R, 
\ddc t)$. Therefore, $G$ is not a Fredholm operator. }
\end{remark}

\begin{remark}
\label{autonomous2}
{\em Arguing as in Remark \ref{autonomous}, we obtain that in the 
autonomous case   $A(t)\equiv A$, $f(t)\equiv 0$, 
$B(t)\equiv B$,   the spectrum 
of $G $ consists of a sequence of vertical lines, and precisely}
$$\sigma(G ) = \bigg\{\lambda \in \C: \; \RE \lambda =  
\sum_{j=1}^rn_j\RE \lambda_j ; \;  n_j\in\N  \cup \{0\} \bigg\}$$
{\em where $\lambda_j$, $j=1, \ldots , r$ are the eigenvalues of $A$. 
Since in this case $\omega_{0}(U)$ is equal to the biggest real 
part of the eigenvalues of $A$, then the spectrum 
of $G$ does not contain elements with real part in 
$(\omega_{0}(U),0)$. So, we have the same spectral gap as in the time 
periodic context. }
\end{remark}

In the previous section we deduced asymptotic behavior results for 
$\cP^{\#}_\tau $ from asymptotic behavior of $P_{s,t}$. Now we 
reverse the procedure, deducing asymptotic behavior of $P_{s,t}$ from 
Corollary \ref{Cor:growthbound}.

\begin{theorem}
    \label{th:P(s,t)asintotico}
Let $c_{0}$ be defined by \eqref{c0}. For each $s<t\in \R$ and  $\varphi \in 
L^{2}(\R^{n}, \nu_{t})$ we have
\begin{equation} 
    \label{P(s,t)asintotico}
\|P_{s,t}(\varphi - M_{t}\varphi)\|_{L^{2}(\R^{n}, \nu_{s})} \leq 
  e^{c_{0}(t-s)}\|\varphi \|_{L^{2}(\R^{n}, \nu_{t})} . 
\end{equation}
\end{theorem}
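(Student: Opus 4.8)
The plan is to transfer the exponential decay estimate \eqref{e7.4} for the evolution semigroup $\mathcal P_\tau$ back to the evolution operator $P_{s,t}$, following the standard dictionary between an evolution operator and its associated evolution semigroup, but with the extra care required by the fact that $P_{s,t}$ moves between the different spaces $L^2(\R^n,\nu_t)$ and $L^2(\R^n,\nu_s)$. The starting observation is the representation formula \eqref{rep2}: for $u\in L^2(\R^{1+n},\nu)$ one has $(\mathcal P_\tau u)(s,x) = (P_{s,s+\tau}u(s+\tau,\cdot))(x)$, and $\Pi$ commutes with $\mathcal P_\tau$ in the sense that $(\mathcal P_\tau \Pi u)(s,x) = M_s(\Pi u)(s+\tau,\cdot) = M_{s+\tau}u(s+\tau,\cdot)$, which one checks using the invariance relation \eqref{evmeas}. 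Hence $\big(\mathcal P_\tau(u-\Pi u)\big)(s,x) = \big(P_{s,s+\tau}(u(s+\tau,\cdot) - M_{s+\tau}u(s+\tau,\cdot))\big)(x)$, so that, writing $\tau = t-s$ and integrating, Corollary \ref{Cor:growthbound} gives
\begin{equation*}
\int_{\R}\|P_{\sigma,\sigma+\tau}(u(\sigma+\tau,\cdot) - M_{\sigma+\tau}u(\sigma+\tau,\cdot))\|_{L^2(\R^n,\nu_\sigma)}^2\, d\sigma
\leq e^{2c_0\tau}\int_{\R}\|u(\sigma,\cdot) - M_\sigma u(\sigma,\cdot)\|_{L^2(\R^n,\nu_\sigma)}^2\, d\sigma .
\end{equation*}
This is an $L^2$-in-time averaged version of \eqref{P(s,t)asintotico}; the task is to upgrade it to the pointwise-in-$s$ statement with the clean constant $1$ in front.

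Next I would pass from the integral inequality to the pointwise one. The idea, as indicated in the introduction, is that the function $\sigma \mapsto \|P_{\sigma,\sigma+\tau}\varphi\|_{L^2(\R^n,\nu_\sigma)}^2$ is continuous for fixed $\tau>0$ and for $\varphi \in C^1_b(\R^n)$; granting this, one argues by contradiction. If for some $s_0$ and some $\varphi$ one had $\|P_{s_0,s_0+\tau}(\varphi - M_{s_0+\tau}\varphi)\|_{L^2(\R^n,\nu_{s_0})} > e^{c_0\tau}\|\varphi\|_{L^2(\R^n,\nu_{s_0+\tau})}$, one wants to manufacture a function $u\in L^2(\R^{1+n},\nu)$ concentrated near the time slice $s_0+\tau$ for which the averaged inequality above is violated. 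A convenient way is to use the strong continuity and the evolution law $P_{\sigma,\sigma+\tau} = P_{\sigma,s_0+\tau}P_{s_0+\tau,\sigma+\tau}$ to compare $\|P_{\sigma,\sigma+\tau}(\cdot)\|$ with $\|P_{s_0,s_0+\tau}(\cdot)\|$ for $\sigma$ near $s_0$; then choose $u(\sigma,\cdot)$ supported in a small time interval around $s_0+\tau$ and equal to (a rescaling of) the offending $\varphi$ there, and check that the averaged inequality forces $e^{2c_0\tau}$ times the right-hand side to dominate, contradicting the assumed strict inequality once the interval is small enough. By density of $C^1_b(\R^n)$ in $L^2(\R^n,\nu_{s_0+\tau})$ and the contraction property $\|P_{s,t}\|_{\mathcal L(L^2(\nu_t),L^2(\nu_s))}\le 1$, the estimate then extends to all $\varphi\in L^2(\R^n,\nu_t)$.

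The main obstacle I expect is precisely the continuity of $\sigma\mapsto \|P_{\sigma,\sigma+\tau}\varphi\|_{L^2(\R^n,\nu_\sigma)}^2$: both the operator $P_{\sigma,\sigma+\tau}$ and the measure $\nu_\sigma$ vary with $\sigma$, and one must show their joint continuous dependence. For this I would use the explicit Gaussian representation \eqref{Pst}–\eqref{e2.4}: $P_{\sigma,\sigma+\tau}\varphi(x)$ is an integral of $\varphi$ against $\NN_{m(\sigma+\tau,\sigma),Q(\sigma+\tau,\sigma)}$, and $\nu_\sigma = \NN_{g(\sigma,-\infty),Q(\sigma,-\infty)}$; since $A$, $B$, $f$ are continuous and bounded and $U(t,s)$ depends continuously on $(t,s)$, the means and covariances depend continuously on $\sigma$, and the uniform ellipticity \eqref{invertb} together with \eqref{exp_decay} keeps the covariances uniformly nondegenerate and bounded. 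Writing out $\|P_{\sigma,\sigma+\tau}\varphi\|_{L^2(\R^n,\nu_\sigma)}^2$ as a double Gaussian integral and invoking dominated convergence then yields continuity (indeed local uniform continuity) in $\sigma$ for $\varphi\in C^1_b(\R^n)$; this is the one genuinely technical point, and everything else is the routine evolution-semigroup bookkeeping sketched above.
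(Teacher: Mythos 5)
Your proposal is correct and follows essentially the same route as the paper: the key technical point (continuity of $\sigma\mapsto \|P_{\sigma,\sigma+\tau}\varphi\|_{L^2(\R^n,\nu_\sigma)}^2$ via the explicit Gaussian representation and dominated convergence) and the localization-in-time argument transferring Corollary \ref{Cor:growthbound} to a pointwise estimate (the paper uses a cutoff $u(s,x)=\xi(s)\varphi(x)$ and a shrinking-average limit, which is just the direct form of your contradiction argument) are exactly the paper's two steps. The density of $C^1_b$ and the contraction property close the argument in both versions.
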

\begin{proof}
The starting point is the continuity of the function 
$s\mapsto \|P_{s,s+\tau}\varphi \|_{L^{2}(\R^{n}, 
\nu_{s})}^{2}$, for each $\tau > 0$ and for each $\varphi \in 
C^{1}_{b}(\R^{n})$.
Once it is established, we get estimate \eqref{P(s,t)asintotico} for 
$\varphi \in C^{1}_{b}(\R^{n})$, arguing as in the case of evolution semigroups 
in a fixed Banach space. Since $C^{1}_{b}(\R^{n})$ is dense in 
$L^{2}(\R^{n}, \nu_{t})$, estimate \eqref{P(s,t)asintotico} follows 
for each $\varphi \in 
L^{2}(\R^{n}, \nu_{t})$. 

\vspace{2mm}

\noindent {\em Step 1: continuity of} $s\mapsto \|P_{s,s+\tau}\varphi \|_{L^{2}(\R^{n}, 
\nu_{s})}^{2}$.

Fix $s$, $s_{0}\in \R$. Changing variables in an obvious way, we 
write 
\begin{equation}
    \label{eq:cont}
\|P_{s,s+\tau}\varphi \|_{L^{2}(\R^{n}, \nu_{s})}^{2}-
 \|P_{s_{0},s_{0}+\tau}\varphi \|_{L^{2}(\R^{n}, \nu_{s_{0}})}^{2}
 = \int_{\R^{n}} (u(s,x)^{2}-u(s_{0},x)^{2})\NN _{0,I}(\ddc x),
\end{equation}
where
$$u(s,x) := P_{s,s+\tau}\varphi(Q(s, -\infty)^{1/2}x + g(s, -\infty)).$$
Since $\|u\|_{\infty}\leq \|\varphi\|_{\infty}$, then 
$|u(s,x)^{2}-u(s_{0},x)^{2}| \leq 2 \|\varphi\|_{\infty}|u(s,x)-u(s_{0}, 
x)|$. We estimate $|u(s,x)-u(s_{0},x)|$ 
changing again variables, as follows:
$$|u(s,x)-u(s_{0},x)|   \leq  
$$
$$\begin{array}{l}
 \displaystyle{ \int_{\R^{n}}}   \big|  \varphi(Q(s+\tau,s)^{1/2}y + U(s+\tau, 
s)(Q(s,-\infty)^{1/2}x + g(s, -\infty)) + g(s+\tau, s))  
\\
  - \varphi(Q(s_{0}+\tau,s_{0})^{1/2}y + U(s_{0}+\tau, 
s_{0})(Q(s_{0},-\infty)^{1/2}x + g(s_{0}, -\infty)) + g(s_{0}+\tau, 
s_{0})) \big|   \\
\\
 \NN _{0,I}(\ddc y)\end{array}$$

$$\begin{array}{ll}
    \leq   \|\,|D\varphi|\,\|_{\infty}  \bigg( & \displaystyle{\frac{2^{n/2}}{\pi^{n/2}} }
\|Q(s+\tau,s)^{1/2} - Q(s_{0}+\tau,s_{0})^{1/2}\| + 
\\
& \|U(s+\tau,s)Q(s, -\infty)^{1/2} - U(s_{0}+\tau, 
s_{0})Q(s_{0}+\tau,s_{0})^{1/2}\| \,|x|+
\\
& |g(s+\tau, s) - g(s_{0}+\tau, s_{0})|\bigg)
\end{array}$$
Using this estimate, we see that the integral  in \eqref{eq:cont} 
goes to $0$ as $s\to s_{0}$  by dominated convergence.  

\vspace{2mm}

\noindent {\em Step 2: conclusion.}

Fix $t\in \R$ and $\xi \in C^{\infty}_{c}(\R )$ such that $\xi(t) 
=1$. Set 
$$u(s,x):= \xi(s) \varphi(x), \quad s\in \R, \;x\in \R^{n}.$$
Then $u\in L^{2}(\R^{1+n}, \nu)$. We recall that 
$$(\cP_\tau (u-\Pi u))(s,x) = P_{s,s+\tau} u(s+\tau, \cdot)(x) - 
M_{s+\tau}u(s+\tau, \cdot) = \xi(s+\tau)(P_{s,s+\tau}\varphi(x) - 
M_{s+\tau}\varphi),$$
so that 
$$\|\cP_\tau (u-\Pi u)(s, \cdot)\|_{L^{2}(\R^{n}, 
\nu_{s})}^{2} = \xi(s+\tau)^{2}\bigg(\int_{\R^{n}}(P_{s,s+\tau}\varphi(x))^{2}
\nu_{s}(\ddc x) -\bigg( \int_{\R^{n}} \varphi(x) \nu_{s+\tau}(\ddc 
x)\bigg)^{2}\bigg).$$
Therefore, for each $\tau > 0$ the function $s\mapsto \|\cP_\tau (u-\Pi u)(s, \cdot)\|_{L^{2}(\R^{n}, 
\nu_{s})}^{2} $ is continuous. This is true also at $\tau =0$, since 
\begin{align*}
    \| (u-\Pi u)(s, \cdot)\|_{L^{2}(\R^{n}, \nu_{s})}^{2} 
= \| \xi(s) (\varphi - M_{s}\varphi)\|_{L^{2}(\R^{n}, \nu_{s})}^{2}
\\
= |\xi(s)|^2 \bigg( \int_{\R^{n}} \varphi(x)^{2} \nu_{s}(\ddc x) - 
\bigg( \int_{\R^{n}} \varphi(x) \nu_{s}(\ddc x))\bigg)^{2}\bigg).
\end{align*}
Hence, we have
\begin{align*}
\|P_{s,t}(I-M_{t})\varphi\|^2_{L^2(\R^n,\nu_s)}
		 &=\| \cP_{t-s}(u-\Pi u) (s, \cdot 
		 )\|^2_{L^2(\R^n,\nu_s)}\\
		 & =\lim\limits_{\varepsilon\to0^+}\frac1\varepsilon
		 \int\limits_s^{s+\varepsilon}\| \cP_{t-s}(u-\Pi u) (\eta, \cdot) 
		\|^2_{L^2(\R^n,\nu_{\eta})}\dd \eta\\
		 &=\lim\limits_{\varepsilon\to
		 0^+}\frac1\varepsilon\|\chi_{[s,s+\varepsilon]}\cP_{t-s}(u-\Pi u) \|_{L^2(\R^{n+1},\nu)}^{2}
		 \\
		 &=\lim\limits_{\varepsilon\to
		 0^+}\frac1\varepsilon\|\cP _{t-s}(\chi_{[t,t+\varepsilon]}(u-\Pi 
		 u)) \|_{L^2(\R^{n+1},\nu)}^{2}\\
		 &\leq  {\rm e}^{2c_{0}(t-s)}\lim\limits_{\varepsilon\to
		 0^+}\frac1\varepsilon\| \chi_{[t,t+\varepsilon]}(u-\Pi u) \|_{L^2(\R^{n+1},\nu)}^{2}\\
		 &= {\rm e}^{2c_{0}(t-s)}\lim\limits_{\varepsilon\to
		 0^+}\frac1\varepsilon \int_{t}^{t+\varepsilon}\xi(\eta)^{2}
		 \|(\varphi - M_{\eta}\varphi)\|_{L^{2}(\R^{n}, \nu_{\eta})}^{2}
		 \\
		 & = {\rm e}^{2c_{0}(t-s)}\|\varphi - 
		 M_{t}\varphi\|_{L^2(\R^{n},\nu_t)}^{2}
\end{align*} 
and \eqref{P(s,t)asintotico} follows. \end{proof}


\section{Hypercontractivity}

In this section the data $A$, $B$, $f$ are bounded but not necessarily 
periodic. 

Since   $ \cP _\tau $ acts as a translation semigroup  
in the time variable, it cannot improve $\nu$-summability. 
Thus, it seems hard to get hypercontractivity 
estimates for $P_{s,t}$ from properties of  $ \cP _\tau $.
In fact, we follow the ideas of \cite{Gro75}, adapting 
his procedure to the time depending case: fixed any $t\in \R$ and 
$q>1$, we look for a differentiable function $p:(-\infty, t]\mapsto 
[q, +\infty)$ such that $p(t) = q$ and 
$$\frac{\partial}{\partial s}\,\| P_{s,t}\varphi\|_{L^{p(s 
)}(\R^n,\nu_s)}\geq 0, \quad s\leq t$$ 
for all good (e.g., exponential) functions 
$\varphi$. If such a $p$ exists, we get $\| P_{s,t}\varphi\|_{L^{p(s 
)}(\R^n,\nu_s)} \leq \|\varphi\|_{L^{q}(\R^n,\nu_t)} $ for all 
exponential functions, and hence, by density, for all $\varphi\in L^{q}(\R^n,\nu_t)$. 

In the time independent case, hypercontractivity of a semigroup is equivalent to the 
occurrence of a logarithmic Sobolev inequality for its invariant 
measure (\cite{Gro75}). Since our measures $\nu_{t}$ are 
Gaussian, they satisfy logarithmic Sobolev inequalities, which are the 
starting point of the procedure. 
As in the autonomous case, what we need are 
 log-Sobolev inequalities expressed in terms of the quadratic forms associated to 
 the operators $L(t)$. Dealing with the nonautonomous case, an 
 additional term appears in the quadratic form,  i.e. we have 
 \begin{equation}
     \label{quadraticform}
\int\limits_{\R^n}\varphi\, L(t) \varphi \, \nu_t(\ddc
 x)=-\frac 12\int\limits_{\R^n}|B^*(t)\nabla \varphi |^2\nu_t(\ddc
 x)
 -\frac 12 \int\limits_{\R^n}\varphi ^2\,\partial_t\rho(x,t)\dd x, 
 \quad \varphi\in H^{2}(\R^{n}, \nu_{t}),
 \end{equation}
as a consequence of \cite[Lemma 2.4]{GL07}, and this produces an 
additional term in the log-Sobolev inequalities. More precisely, the 
following lemma holds.

\begin{lemma}
\label{lem:logsob2}
For $p\in(1,\infty)$, $t\in\R$
and $\varphi\in W^{2,p}(\R^n,\nu_t)$, we have
\begin{equation}
    \label{logSob}
\begin{array}{l}
\displaystyle{\int\limits_{\R^n}|\varphi(x)|^p\log(|\varphi(x)|)\nu_t(\ddc x)
\leq \|\varphi\|_{L^p(\R^n,\nu_t)}^p\log(\|\varphi\|_{L^p(\R^n,\nu_t)})}
\\
\\
\displaystyle{ + c(p,t)\bigg(\RE\langle
- L(t)\varphi,\varphi_p\rangle_{L^2(\R^n,\nu_t)} + \frac1p\int\limits_{\R^n}
|\varphi(x)|^p\partial_t\rho\dd
x\bigg).}
\end{array}
\end{equation}
Here, $\varphi_p = |\varphi|^{p-2}\varphi$ and 
\begin{equation}
    \label{c(p,t)}
c(p,t) =  \frac {p }{ p-1 } \|Q^{1/2}(t,-\infty)B^{*-1}(t )\|^{2}  . 
\end{equation}
\end{lemma}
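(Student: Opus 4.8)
The plan is to deduce \eqref{logSob} from the classical logarithmic Sobolev inequality for the Gaussian measure $\nu_t=\NN_{g(t,-\infty),Q(t,-\infty)}$, which is nondegenerate thanks to \eqref{invertb}, and then to convert the resulting gradient term into the form appearing in the statement by means of the $|\varphi|^p$-version of the quadratic-form identity \eqref{quadraticform}.

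First I would record the Gaussian logarithmic Sobolev inequality in the form
$$\int_{\R^n}h^2\log(h^2)\,\nu_t(\ddc x)-\|h\|_{L^2(\R^n,\nu_t)}^2\log\!\big(\|h\|_{L^2(\R^n,\nu_t)}^2\big)\le 2\int_{\R^n}\langle Q(t,-\infty)\nabla h,\nabla h\rangle\,\nu_t(\ddc x),$$
valid for real-valued $h$; it follows from the inequality for the standard Gaussian through the linear change of variable $x=Q(t,-\infty)^{1/2}y+g(t,-\infty)$. Since $B(t)$, hence $B^*(t)$, is invertible by \eqref{invertb}, writing $Q(t,-\infty)^{1/2}\nabla h=\big(Q(t,-\infty)^{1/2}B^{*-1}(t)\big)B^*(t)\nabla h$ gives the pointwise bound $\langle Q(t,-\infty)\nabla h,\nabla h\rangle\le\|Q^{1/2}(t,-\infty)B^{*-1}(t)\|^2|B^*(t)\nabla h|^2$, so the right-hand side above is dominated by $2\|Q^{1/2}(t,-\infty)B^{*-1}(t)\|^2\int_{\R^n}|B^*(t)\nabla h|^2\,\nu_t(\ddc x)$.

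Next I would pass from the exponent $2$ to an arbitrary $p\in(1,\infty)$ by applying the last inequality with $h=|\varphi|^{p/2}$ (regularizing $|\varphi|$ by $(|\varphi|^2+\eps)^{1/2}$ and letting $\eps\to0^+$, which also takes care of complex-valued $\varphi$ via $|B^*(t)\nabla|\varphi||\le|B^*(t)\nabla\varphi|$). Using $|B^*(t)\nabla|\varphi|^{p/2}|^2=\tfrac{p^2}{4}|\varphi|^{p-2}|B^*(t)\nabla\varphi|^2$ and dividing by $p$ one obtains
$$\int_{\R^n}|\varphi|^p\log(|\varphi|)\,\nu_t(\ddc x)\le\|\varphi\|_{L^p(\R^n,\nu_t)}^p\log\!\big(\|\varphi\|_{L^p(\R^n,\nu_t)}\big)+\frac p2\|Q^{1/2}(t,-\infty)B^{*-1}(t)\|^2\!\int_{\R^n}\!|\varphi|^{p-2}|B^*(t)\nabla\varphi|^2\,\nu_t(\ddc x).$$
It then remains to rewrite the last integral. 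Applying the Leibniz and chain rules for the second order operator $L(t)$ to $\varphi$ and $\varphi_p=|\varphi|^{p-2}\varphi$ and integrating against $\nu_t$, exactly as \eqref{quadraticform} is obtained from \cite[Lemma 2.4]{GL07}, one gets the identity
$$\int_{\R^n}|\varphi|^{p-2}|B^*(t)\nabla\varphi|^2\,\nu_t(\ddc x)=\frac{2}{p-1}\Big(\RE\langle-L(t)\varphi,\varphi_p\rangle_{L^2(\R^n,\nu_t)}+\frac1p\int_{\R^n}|\varphi(x)|^p\,\partial_t\rho\,\ddc x\Big),$$
where the factor $\tfrac{p(p-1)}{2}$ arises from combining the contributions $p-1$ and $\tfrac{(p-1)(p-2)}{2}$ produced by the chain rule. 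Substituting this into the previous display yields \eqref{logSob} with $c(p,t)=\tfrac{p}{p-1}\|Q^{1/2}(t,-\infty)B^{*-1}(t)\|^2$, i.e. \eqref{c(p,t)}. Finally, a density argument (smooth functions are dense in $W^{2,p}(\R^n,\nu_t)$, and the Gaussian weight supplies the decay needed to justify the integrations by parts) extends the inequality from smooth $\varphi$ to all of $W^{2,p}(\R^n,\nu_t)$.

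The main obstacle is precisely this last identification: one must perform the integrations by parts against the Gaussian density $\rho(\cdot,t)$ carefully and keep exact track both of the coefficient $\tfrac1p$ and of the sign of the correction term $\int_{\R^n}|\varphi|^p\partial_t\rho\,\ddc x$ — the term that has no counterpart in the autonomous theory — while simultaneously controlling the singularity of $|\varphi|^{p-2}$ on the zero set of $\varphi$ when $1<p<2$ and handling complex-valued $\varphi$. Once \eqref{quadraticform} is granted, everything else is a routine adaptation of Gross's original argument.
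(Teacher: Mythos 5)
Your overall strategy is exactly the paper's: the Gaussian log--Sobolev inequality transported by the affine change of variables, the factorization $Q^{1/2}\nabla h=(Q^{1/2}B^{*-1})B^{*}\nabla h$, application to $h\approx|\varphi|^{p/2}$ with the regularization $(|\varphi|^2+\eps)^{p/4}$, and conversion of the Dirichlet form into $\RE\langle-L(t)\varphi,\varphi_p\rangle$ plus the $\partial_t\rho$ correction via \eqref{quadraticform}. (Your sign $+\frac1p\int|\varphi|^p\partial_t\rho$ is the one consistent with the statement and with $\int L(t)\psi\,d\nu_t=\int\psi\,\partial_t\rho\,dx$.)

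There is, however, a genuine gap in your key intermediate step for complex-valued $\varphi$ — and the lemma must cover complex $\varphi$, since it is applied to $P_{s,t}e^{i\langle k,\cdot\rangle}$ in the hypercontractivity theorem. Writing $|\varphi|^p=G(|\varphi|^2)$ with $G(w)=w^{p/2}$ and using $L(t)|\varphi|^2=2\RE(\overline\varphi L(t)\varphi)+|B^*(t)\nabla\varphi|^2$, the chain rule gives
\begin{equation*}
\RE\langle-L(t)\varphi,\varphi_p\rangle+\frac1p\int_{\R^n}|\varphi|^p\partial_t\rho\dd x
=\frac12\int_{\R^n}|\varphi|^{p-2}|B^*(t)\nabla\varphi|^2\nu_t(\ddc x)
+\frac{p-2}{8}\int_{\R^n}|\varphi|^{p-4}\bigl|B^*(t)\nabla|\varphi|^2\bigr|^2\nu_t(\ddc x),
\end{equation*}
and the last term equals $\frac{p(p-2)}{2}\int|\varphi|^{p-2}|B^*(t)\nabla\varphi|^2\,\nu_t/p$ only when $\varphi$ is real; in general one merely has $|B^*(t)\nabla|\varphi|^2|^2\leq 4|\varphi|^2|B^*(t)\nabla\varphi|^2$. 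Consequently your ``identity'' for $\int|\varphi|^{p-2}|B^*(t)\nabla\varphi|^2$ is really the inequality $\geq\frac{2}{p-1}(\cdots)$ when $p>2$, which is the wrong direction to combine with your step $\int|B^*(t)\nabla|\varphi|^{p/2}|^2\leq\frac{p^2}{4}\int|\varphi|^{p-2}|B^*(t)\nabla\varphi|^2$: the argument does not close for $p>2$. The repair — which is what the paper's computation with $\varphi_\eps$ actually does — is to bound $\int|B^*(t)\nabla|\varphi|^{p/2}|^2=\frac{p^2}{16}\int|\varphi|^{p-4}|B^*(t)\nabla|\varphi|^2|^2$ directly from the displayed identity, applying the pointwise bound $\frac14|B^*(t)\nabla|\varphi|^2|^2\leq|\varphi|^2|B^*(t)\nabla\varphi|^2$ to the \emph{first} integral on the right (so that $\frac12 I+\frac{p-2}{8}J\geq\frac{p-1}{8}J$ for every $p>1$), rather than to the target gradient term; this yields $\int|B^*(t)\nabla|\varphi|^{p/2}|^2\leq\frac{p^2}{2(p-1)}(\RE\langle-L(t)\varphi,\varphi_p\rangle+\frac1p\int|\varphi|^p\partial_t\rho)$ and hence \eqref{logSob} with the stated $c(p,t)$. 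The $\eps$-regularization must also be threaded through this computation (the paper checks that the $\eps$-terms have a favorable sign), since for $1<p<2$ the weight $|\varphi|^{p-2}$ is singular on the zero set of $\varphi$.
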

\begin{proof}
The starting point is the logarithmic Sobolev inequality 
$$\int\limits_{\R^n}|\psi(x)|^2\log(|\psi(x)|)\nu_t(\ddc x)
\leq   \|Q^{1/2}(t,-\infty)\nabla \psi\|^2_{L^2(\R^n,\nu_t)} 
+\|\psi\|^{2}_{L^2(\R^n,\nu_t)}\log\|\psi\|_{L^2(\R^n,\nu_t)},$$
valid for any $t\in \R$ and $\psi\in H^{1 }(\R^n,\nu_t)$, 
that follows from the well known logarithmic Sobolev inequality for 
the Gaussian measure $\NN (0,I)$ (e.g., \cite[formula 
(1.2)]{Gro75}) via the standard change of 
variables already used in the proof of Theorem \ref{th:P(s,t)asintotico}.  
Since $B^{*}(t)$ is 
invertible, we get 
\begin{equation}
\label{eq:logsobNostra}
\begin{array}{lll} 
\displaystyle{\int\limits_{\R^n}|\psi(x)|^2\log(|\psi(x)|)\nu_t(\ddc x)}
& \leq & 
\displaystyle{ \|Q^{1/2}(t,-\infty)B^{*-1}(t )\|^{2}
\int\limits_{\R^n}|B^{*}(t)\nabla 
\psi(x) |^2\nu_t(\ddc x) }
\\
\\
& & +\|\psi\|^2_{L^2(\R^n,\nu_t)}\log\|\psi\|_{L^2(\R^n,\nu_t)}.
\end{array}
\end{equation}
The statement will be obtained applying \eqref{eq:logsobNostra} to 
the functions $\varphi_\varepsilon:=
 (|\varphi|^2+\varepsilon)^{\frac p4}$, and then letting 
$\varepsilon\to 0^{+}$. To this aim, we have to estimate the integrals
$\int_{\R^n}|B^{*}(t)\nabla 
\varphi_\varepsilon  |^2\nu_t(\ddc x) $.	
Here and in the following, we suppress the dependency of $\varphi$
and $\varphi_\varepsilon$ on $x$. An easy calculation shows that 
\begin{align}
\label{fe1}
\partial_j \varphi_\varepsilon 
& =\frac p4 (|\varphi|^2+\varepsilon)^{\frac p4-1}\partial_j |\varphi|^2
\\
\label{fe2}
(B^*(t)\nabla \varphi_\varepsilon)^2 
&= \frac{p^2}{16}(|\varphi|^2+\varepsilon)^{\frac p2-2}
\left(B^*(t)\nabla |\varphi|^2 \right)^2 
\\
\label{fe3}
\partial_{ij}\varphi_\varepsilon
&=\frac p4 \left(\frac p4-1\right)(|\varphi|^2+\varepsilon)^{\frac p4 -2}
\partial_i |\varphi|^2\cdot \partial_j |\varphi|^2
+\frac p4 ( |\varphi|^2+\varepsilon)^{\frac p4-1}\partial_{ij}|\varphi|^2.
\end{align}
It follows from \eqref{fe3} and  from the identity
$L(t)(\varphi\overline{\varphi})=2\,\Re \overline{\varphi}\,L(t)\varphi+
|B(t)^*\nabla \varphi|^2$
that
\begin{align*}
L(t)\varphi_\varepsilon
=&\frac p8 \left(\frac p4-1\right)(|\varphi|^2+\varepsilon)^{\frac
p4-2}\left(B^*(t)\nabla |\varphi|^2\right)^2
+\frac p4(|\varphi|^2+\varepsilon)^{\frac p4-1} L(t)|\varphi|^2
\\
=&\frac p8 \left(\frac p4-1\right)(|\varphi|^2+\varepsilon)^{\frac
p4-2}\left(B^*(t)\nabla |\varphi|^2\right)^2
+\frac  p2\RE(|\varphi|^2+\varepsilon)^{\frac p4-1} \overline
\varphi L(t)\varphi
\\
&+\frac  p4(|\varphi|^2+\varepsilon)^{\frac p4-1}|B(t)^*\nabla
\varphi|^2.
\end{align*}
Since $|\varphi|^2|B^*(t)\nabla \varphi|^2\geq 
\frac 14(B^*(t)\nabla|\varphi|^2)^2$, we obtain
\begin{align*}
L(t)\varphi_\varepsilon
=&\frac p8 \left(\frac p4-1\right) (|\varphi|^2+\varepsilon)^{\frac
p4-2}\left(B^*(t)\nabla |\varphi|^2\right)^2
+\frac  p2\RE(|\varphi|^2+\varepsilon)^{\frac p4-1} 
\overline \varphi L(t)\varphi
\\
&+\frac  p4(|\varphi|^2+\varepsilon)^{\frac p4-2}|\varphi|^2|B(t)^*\nabla
\varphi|^2 +\frac  p4(|\varphi|^2+\varepsilon)^{\frac p4-2}\varepsilon
|B(t)^*\nabla \varphi|^2
\\
\geq &\frac {p^2-2p}{32}(|\varphi|^2+\varepsilon)^{\frac
p4-2}\left(B^*(t)\nabla |\varphi|^2\right)^2
+\frac  p2\RE(|\varphi|^2+\varepsilon)^{\frac p4-1} \overline
\varphi L(t)\varphi.
\end{align*}
Finally, \eqref{fe2} yields 
\begin{align*}
L(t)\varphi_\varepsilon \geq
& \; \frac{p-2}{ 2p}\varphi_\varepsilon^{-1}
\left(B^*(t)\nabla \varphi_\varepsilon\right)^2
+\frac  p2\,\RE(|\varphi|^2+\varepsilon)^{\frac p4-1} \overline
\varphi L(t)\varphi.
\end{align*}
Applying  the identity \eqref{quadraticform} to $\varphi_\varepsilon$ 
we obtain
\begin{align*}
\int\limits_{\R^n}|B^*\nabla \varphi_\varepsilon|^2\nu_t(\ddc
x)
\leq&\int\limits_{\R^n}\varphi_\varepsilon^2 \, \partial_t\rho(x,t)\dd x
-\frac{p-2}{p}\int\limits_{\R^n}
(B^*(t)\nabla \varphi_\varepsilon)^2\nu_t(\ddc x)
\\
&-p\, \RE\int\limits_{\R^n}  (|\varphi|^2+\varepsilon)^{\frac p2-1} \overline
\varphi L(t)\varphi \,\nu_t(\ddc x).
\end{align*}
This implies
\begin{align*}
& \int\limits_{\R^n}|B^*(t)\nabla \varphi_\varepsilon|^2\nu_t(\ddc x)
\\
& \leq -\frac {p^2}{2(p-1)} 
\bigg(   \RE
\int\limits_{\R^n}(\varphi^2+\varepsilon)^{\frac     
p2-1}\overline{\varphi}\,
L(t)\varphi\,\nu_t(\ddc x)
-\frac 1p \int\limits_{\R^n}\varphi_\varepsilon^2\,\partial_t\rho(x,t)\dd 
x\bigg).
\end{align*}
Replacing this estimate in \eqref{eq:logsobNostra} and 
letting $\varepsilon$ tend to $0$, the lemma follows.
\end{proof}

Next, we prove a variant of \cite[Lemma 1.1]{Gro75}. Again, we have to
deal with an additional term.

\begin{lemma}
\label{lem:logsob3}
Let $t\in\R$, $a\in (0, +\infty]$ and
$I=(t-a,t]$. Assume that 
$p\in C^1( I )$ with $p(s)>1$ for $s\in I$, $u(\cdot,x)\in
C^1( I )$ for all $x\in\R^n$ and $u(s,\cdot)\not\equiv 0$ for $s\in I$.
Moreover, assume that  there are $C,k>0$ such that 
\begin{align*}
\max\left\{
|u(s,x)|,|\partial_su(s,x)|\right)\leq C|x|^k,\quad s\in I, \;
\quad x\in\R.
\end{align*}
Then the function $\alpha:I\to \R$ defined by  $\alpha(s)=\|u(s,\cdot)\|_{L^{p(s)}(\R^n,\nu_s)}$ is differentiable in
$I$ and
\begin{align*}
 \alpha'(s) = & \, \alpha(s)^{1-p(s)}
\bigg\{ 
\RE\langle
\partial_s
u(s,\cdot),u_{p(s)}(s,\cdot)\rangle_{L^2(\R^n,\nu_s)}+\frac
1{p(s)}
\int\limits_{\R^n}|u(s,x)|^{p(s)}\partial_s\rho\dd x
\\
&+\frac{p'(s)}{p(s)} \bigg(
    \int\limits_{\R^n}|u(s,x)|^{p(s)}\log(|u(s,x)|)\nu_s(\ddc x)
- \alpha(s)^{p(s)}\log(\alpha(s)) \bigg) \bigg\}.
\end{align*}
\end{lemma}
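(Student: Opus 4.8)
The plan is to differentiate $\alpha(s)=\|u(s,\cdot)\|_{L^{p(s)}(\R^n,\nu_s)}$ directly from its definition. First I would write $\alpha(s)^{p(s)}=\int_{\R^n}|u(s,x)|^{p(s)}\nu_s(\ddc x)$, and apply the logarithm to get $p(s)\log\alpha(s)=\log\left(\int_{\R^n}|u(s,x)|^{p(s)}\nu_s(\ddc x)\right)$. The right-hand side depends on $s$ in three places: through the exponent $p(s)$, through $u(s,x)$, and through the measure $\nu_s$. The growth assumption $\max\{|u(s,x)|,|\partial_s u(s,x)|\}\leq C|x|^k$, together with the fact that each $\nu_t$ is Gaussian with all moments uniformly bounded in $t$ (this is property \eqref{moments}, valid for every $\alpha>0$), guarantees that all the integrals that appear are finite and that differentiation under the integral sign is justified by dominated convergence; I would state this and move on rather than belabor it.

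The main computational step is to carry out the three contributions to $\frac{d}{ds}\int_{\R^n}|u(s,x)|^{p(s)}\nu_s(\ddc x)$. The exponent term yields $\int_{\R^n}|u(s,x)|^{p(s)}p'(s)\log(|u(s,x)|)\nu_s(\ddc x)$. The $u$-dependence term, using $\partial_s|u|^{p(s)}=p(s)|u|^{p(s)-2}u\,\partial_s u$ (taking real parts in the complex case, and recalling $u_{p(s)}=|u|^{p(s)-2}u$), yields $p(s)\,\RE\langle\partial_s u(s,\cdot),u_{p(s)}(s,\cdot)\rangle_{L^2(\R^n,\nu_s)}$. The measure term is the genuinely nonautonomous ingredient: writing $\nu_s(\ddc x)=\rho(x,s)\ddc x$, it contributes $\int_{\R^n}|u(s,x)|^{p(s)}\partial_s\rho(x,s)\ddc x$. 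Collecting these, $\frac{d}{ds}\left(\alpha(s)^{p(s)}\right)$ equals the sum of these three pieces.

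To finish, I would differentiate the identity $\alpha(s)^{p(s)}=e^{p(s)\log\alpha(s)}$ the other way, obtaining $\frac{d}{ds}\left(\alpha(s)^{p(s)}\right)=\alpha(s)^{p(s)}\left(p'(s)\log\alpha(s)+p(s)\frac{\alpha'(s)}{\alpha(s)}\right)$, and then solve for $\alpha'(s)$. Equating the two expressions for $\frac{d}{ds}\left(\alpha(s)^{p(s)}\right)$ gives
\begin{align*}
p(s)\alpha(s)^{p(s)-1}\alpha'(s) = {}& p(s)\,\RE\langle\partial_s u(s,\cdot),u_{p(s)}(s,\cdot)\rangle_{L^2(\R^n,\nu_s)}
+\int\limits_{\R^n}|u(s,x)|^{p(s)}\partial_s\rho\,\ddc x
\\
& +p'(s)\int\limits_{\R^n}|u(s,x)|^{p(s)}\log(|u(s,x)|)\nu_s(\ddc x)
-p'(s)\alpha(s)^{p(s)}\log\alpha(s).
\end{align*}
Dividing by $p(s)\alpha(s)^{p(s)-1}$ and regrouping the last two terms with the coefficient $p'(s)/p(s)$ produces exactly the claimed formula. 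The only real obstacle is the rigorous justification of differentiation under the integral sign — in particular finding an $s$-uniform dominating function for the difference quotients of $|u(s,x)|^{p(s)}\rho(x,s)$ and its $\log$ — but this is routine given the polynomial growth hypothesis on $u$ and $\partial_s u$, the smoothness of $s\mapsto p(s)$, and the explicit Gaussian form of $\rho(\cdot,s)$ with its locally uniformly bounded moments; I would indicate the dominating function and invoke dominated convergence without grinding through the estimate.
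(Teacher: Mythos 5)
Your proposal is correct and follows essentially the same route as the paper: differentiate the integrand $|u(s,x)|^{p(s)}\rho(x,s)$ in $s$ (picking up the three contributions from the exponent, from $u$, and from the density), justify the interchange by exhibiting an $s$-uniform integrable dominating function obtained from the polynomial growth of $u$ and $\partial_s u$ together with the Gaussian form of $\rho$, and then recover $\alpha'(s)$ from $\frac{d}{ds}\bigl(\alpha(s)^{p(s)}\bigr)$ via the chain rule. The paper's proof is exactly this, stated slightly more tersely.
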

\begin{proof}
We calculate
$$\begin{array}{l}
\displaystyle{\frac\partial{\partial s} \left( |u(s,x)|^{p(s)}\rho(s,x) \right)}
\\
\\
= \displaystyle{\left(p'(s)\log(|u(s,x)|)|u( s,x)|^{p(s)}
+p(s)\frac\partial{\partial s} u (s,x)|u(s,x)|^{p(s)-2}
u(s,x)\right)  \rho(s,x)  }
\\
\\
\displaystyle{+|u(s,x)|^{p(s)}\frac \partial {\partial s}\rho(s,x),}
\quad s\in I.
\end{array}$$
By assumption, there exists $h\in L^1(\R^{n})$ such that
\begin{align*}
\max\bigg\{ |u(s,x)|^{p(s)}\rho(s,x),
\frac\partial{\partial s}\left(
|u(s,x)|^{p(s)}\rho(s,x)\right)
\bigg\}\leq h(x),\quad s\in I, \; x\in\R^n.
\end{align*}
Hence, the assertion follows from Lebesgue's dominated
convergence theorem and the chain rule.
\end{proof}

Now we are able to prove the hypercontractivity of
$(P_{s,t})_{s\leq t}$.

\begin{theorem}
    \label{Th:hyper}
Let $q\in( 1,\infty )$, $t\in \R$  and let 
$p(s,t)$ be the solution of 
$$p'(s) = -\frac{p(s) }{c(p,s)}, \; s\leq 
t; \quad p(t)=q.$$
Then for $s<t$, 
 $P_{s,t}$ maps $L^q(\R^n,\nu_{t})$ into $L^{p(s,t)}(\R^n,\nu_s)$ and
\begin{align*}
\| P_{s,t}\varphi\|_{L^{p(s,t)}(\R^n,\nu_s)}\leq
\|\varphi\|_{L^q(\R^n,\nu_{t})}, \quad \varphi\in
L^q(\R^n,\nu_{t}) .
\end{align*}
\end{theorem}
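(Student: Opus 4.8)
The plan is to follow Gross's original argument, adapted to the time-dependent measures $\nu_s$, using Lemma \ref{lem:logsob2} and Lemma \ref{lem:logsob3} as the two main tools. First I would reduce to a dense class of nice functions: it suffices to prove the estimate for $\varphi$ in a set of functions that is dense in $L^q(\R^n,\nu_t)$ and for which $u(s,x):=(P_{s,t}\varphi)(x)$ and $\partial_s u(s,x)$ both satisfy the polynomial growth bound $\max\{|u(s,x)|,|\partial_s u(s,x)|\}\le C|x|^k$ required in Lemma \ref{lem:logsob3}; exponential functions $\varphi(x)=e^{\langle\xi,x\rangle}$ (or their real/imaginary parts, or a suitable algebra generated by them) are the natural choice, since $P_{s,t}$ applied to an exponential is again an exponential times a constant by the explicit Gaussian formula \eqref{Pst}, and $\partial_s u$ can be computed from the backward Kolmogorov equation $\partial_s u = -\cL(s)u$. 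For such $\varphi$ one checks $u\in W^{2,p}(\R^n,\nu_s)$ for all relevant $p$, so both lemmas apply.

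Next I would fix $q$ and $t$, let $p(s)=p(s,t)$ be the solution of the ODE $p'(s)=-p(s)/c(p(s),s)$ with $p(t)=q$, note $p(s)>q\ge 1$ for $s<t$, and set $\alpha(s)=\|u(s,\cdot)\|_{L^{p(s)}(\R^n,\nu_s)}=\|P_{s,t}\varphi\|_{L^{p(s)}(\R^n,\nu_s)}$. Applying Lemma \ref{lem:logsob3} with $u(s,x)=(P_{s,t}\varphi)(x)$ and using $\partial_s u = -\cL(s)u = -L(s)u$ (the drift term $\langle A(s)x+f(s),\nabla u\rangle$ being absorbed into the operator $L(s)$ whose quadratic form is \eqref{quadraticform}), I get a formula for $\alpha'(s)$. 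The key step is then to bound the bracketed expression from below by $0$. The log-term $\frac{p'(s)}{p(s)}\big(\int |u|^{p(s)}\log|u|\,\nu_s(dx)-\alpha(s)^{p(s)}\log\alpha(s)\big)$ is handled by Lemma \ref{lem:logsob2} applied with $p=p(s)$, $t=s$, and $\varphi$ replaced by $u(s,\cdot)$: since $p'(s)<0$, multiplying the log-Sobolev inequality \eqref{logSob} by $p'(s)/p(s)$ reverses the inequality, producing a lower bound of the form
\begin{align*}
\frac{p'(s)}{p(s)}\Big(\int_{\R^n}|u|^{p(s)}\log|u|\,\nu_s(dx)-\alpha(s)^{p(s)}\log\alpha(s)\Big)
\ge \frac{p'(s)}{p(s)}c(p(s),s)\Big(\RE\langle -L(s)u,u_{p(s)}\rangle_{L^2(\R^n,\nu_s)}+\tfrac1{p(s)}\int_{\R^n}|u|^{p(s)}\partial_t\rho\,dx\Big).
\end{align*}
Now substituting $\partial_s u=-L(s)u$ into the first two terms of Lemma \ref{lem:logsob3}, I recognize exactly $\RE\langle -L(s)u,u_{p(s)}\rangle_{L^2(\R^n,\nu_s)}+\frac1{p(s)}\int|u|^{p(s)}\partial_t\rho\,dx$, and the choice of the ODE $p'(s)c(p(s),s)=-p(s)$, i.e. $\frac{p'(s)}{p(s)}c(p(s),s)=-1$, is precisely what makes this term cancel against the log-term lower bound, leaving $\alpha'(s)\ge 0$.

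From $\alpha'(s)\ge 0$ on $(-\infty,t]$ I conclude $\alpha(s)\le\alpha(t)=\|\varphi\|_{L^q(\R^n,\nu_t)}$, i.e. $\|P_{s,t}\varphi\|_{L^{p(s,t)}(\R^n,\nu_s)}\le\|\varphi\|_{L^q(\R^n,\nu_t)}$ for all nice $\varphi$ and all $s<t$. Finally, by density of the chosen class in $L^q(\R^n,\nu_t)$ and the fact that the bound is uniform (norm $\le1$), the estimate extends to all $\varphi\in L^q(\R^n,\nu_t)$, and in particular $P_{s,t}$ maps $L^q(\R^n,\nu_t)$ into $L^{p(s,t)}(\R^n,\nu_s)$. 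The main obstacle I anticipate is the justification of the regularity/growth hypotheses needed to differentiate $\alpha$ and to apply the log-Sobolev inequality to $u(s,\cdot)$: one must verify that for $\varphi$ in the chosen dense class, $P_{s,t}\varphi$ and its $s$-derivative have the required polynomial growth uniformly for $s$ in compact subintervals of $(-\infty,t]$, and that $\partial_t\rho$ and the coefficient $c(p,s)$ are controlled so that all integrals converge and the ODE for $p$ has a solution on the relevant interval (using the boundedness of $A,B,f$ and \eqref{invertb}). Once the explicit Gaussian representation \eqref{Pst}--\eqref{e2.4} is used, these are routine but slightly delicate estimates; the algebraic heart of the proof — the cancellation engineered by the ODE — is exactly as in Gross's autonomous argument, with the single new ingredient being the $\partial_t\rho$ terms, which appear symmetrically in Lemma \ref{lem:logsob2} and Lemma \ref{lem:logsob3} and therefore cancel as well.
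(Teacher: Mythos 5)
Your proposal is correct and follows essentially the same route as the paper: pass to the dense class of exponentials (the paper uses $\mathrm{span}\{e^{i\langle k,x\rangle}\}$, for which $P_{s,t}\varphi_k$ is explicit and the hypotheses of Lemma \ref{lem:logsob3} are immediate), differentiate $\alpha(s)=\|P_{s,t}\varphi\|_{L^{p(s)}(\R^n,\nu_s)}$ via Lemma \ref{lem:logsob3} with $\partial_s u=-L(s)u$, and use Lemma \ref{lem:logsob2} together with the ODE $p'(s)c(p(s),s)=-p(s)$ to force $\alpha'(s)\ge 0$, concluding by density. The cancellation mechanism, including the matching $\partial_t\rho$ terms, is exactly the one used in the paper.
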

\begin{proof}
Fix $t\in\R$ and
let $\varphi\in {\mathrm span}\;\{ {\rm e}^{i\langle
k,x\rangle}:k\in \R^n\}$. Set $p(s)=p(s,t)$,
$u(s,\cdot)=P_{s,t}\varphi$ and
$\alpha(s)=\|P_{s,t}\varphi\|_{L^{p(s)}(\R^n,\nu_s)}$. Since 
$$P_{s,t}\varphi_k(x)={\rm e}^{i\langle
g(t,s)+U(t,s)x,k\rangle-\frac12\langle
Q(t,s)k,k\rangle},$$
for $\varphi_k(x) ={\rm e}^{i\langle
k,x\rangle}$, then the functions $\alpha$ and  $p$ satisfy the assumptions 
of Lemma~\ref{lem:logsob3}. Using Lemma~\ref{lem:logsob3}, 
we get 
\begin{align*}
& \alpha'(s) 
=  \alpha(s)^{1-p(s)}
\bigg\{
\RE\langle
-L(s)u(s,\cdot),u_{p(s)}(s,\cdot )\rangle_{L^2(\R^n,\nu_s)}
+\frac 1{p(s)}
\int\limits_{\R^n}|u(s,x)|^{p(s)}\partial_s\rho(s,x)\dd x 
\\
& 
+\frac{p'(s) }{ p(s) } 
\bigg(\int\limits_{\R^n}|u(s,\cdot)|^{p(s)}\log(|u(s,\cdot)|)\nu_t(\ddc
x) -\|u(s,\cdot)\|_{L^{p(s)}(\R^n,\nu_s)}^{p(s)}\log(\|u(s,\cdot)\|_{L^{p(s)}(\R^n,\nu_s)})\bigg)
\bigg\}.
\end{align*}
The choice $p'(s) = -\frac{p(s) }{c(p,s)}$ and inequality \eqref{logSob} 
thus yield  $\frac{d \alpha(s)}{ds}\geq
0$, which implies
\begin{align*}
\|P_{s,t}\varphi\|_{L^{p(s,t)}(\R^n,\nu_s)}=\alpha(s)
\leq
\alpha(t)=\|\varphi\|_{L^{q}(\R^n,\nu_{t})},\quad s\leq t.
\end{align*}
Since ${\mathrm span}\;\{ {\rm e}^{i\langle
k,x\rangle}:k\in \R^n\}$ is dense in $L^q(\R^n,\nu_{t})$, the
proof is complete.
\end{proof}

 \begin{remark}
{\em
The solution 
$p(s,t)$ of  
$$p'(s) = -\frac{p(s) }{c(p,s)}, \; s\leq 
t; \quad p(t)=q$$
is given by}
\begin{align*}
p(s,t)=1+(q-1)\exp\left(
 \int_s^{t} \|Q^{\frac12}(r,-\infty)B^{*-1}(r) \| ^{-2}\dd r\right),\quad
s<t.
\end{align*}
{\em Since $\|Q^{1/2}(r, -\infty)B^{*-1}(r)\|^{2} \leq 
\int_{-\infty}^{r} \|B^{*}(\sigma)U^{*}(\sigma, r) B^{*-1}(r)\|^{2} 
d\sigma$, then for each $\omega \in (\omega_{0}, 0)$ we have}
$$\|Q^{\frac12}(r,-\infty)B^{*-1}(r)\|^2 \leq  
\frac{C^{2}(M(\omega))^{2}}{2\mu_{0}^{2}|\omega|}$$
{\em  with $C=\sup_{t\in \R}\|B(t)\|$. Hence, }
$$p(s,t) \geq 1 + (q-1)e^{2c_{0}(s-t)}, \quad s\leq t,$$
{\em  where $c_{0}$ is the constant defined in  \eqref{c0}.}
\end{remark}

\subsection{Acknowledgements.}
We thank Marco Fuhrman for useful conversations about 
hypercontractivity.

\providecommand{\bysame}{\leavevmode\hbox to3em{\hrulefill}\thinspace}
\providecommand{\MR}{\relax\ifhmode\unskip\space\fi MR }
\providecommand{\MRhref}[2]{%
  \href{http://www.ams.org/mathscinet-getitem?mr=#1}{#2}
}
\providecommand{\href}[2]{#2}

\end{document}